\newtheorem{thm}{Theorem}[section]
\newtheorem{prop}[thm]{Proposition}
\newtheorem{lem}[thm]{Lemma}
\newtheorem{lem-def}[thm]{Lemma-Definition}
\newtheorem{cor}[thm]{Corollary}
\theoremstyle{definition}
\newtheorem*{ack}{Acknowledgement}
\newtheorem{rmk}[thm]{Remark}
\newtheorem{dfn}[thm]{Definition}
\numberwithin{equation}{section}
\newcommand{\nc}{\newcommand}
\nc{\on}{\operatorname}
\nc{\fraka}{{\mathfrak a}}
\nc{\frakb}{{\mathfrak b}}
\nc{\frakc}{{\mathfrak c}}
\nc{\frakd}{{\mathfrak d}}
\nc{\frake}{{\mathfrak e}}
\nc{\frakf}{{\mathfrak f}}
\nc{\frakg}{{\mathfrak g}}
\nc{\frakh}{{\mathfrak h}}
\nc{\fraki}{{\mathfrak i}}
\nc{\frakj}{{\mathfrak j}}
\nc{\frakk}{{\mathfrak k}}
\nc{\frakl}{{\mathfrak l}}
\nc{\frakm}{{\mathfrak m}}
\nc{\frakn}{{\mathfrak n}}
\nc{\frako}{{\mathfrak o}}
\nc{\frakp}{{\mathfrak p}}
\nc{\frakq}{{\mathfrak q}}
\nc{\frakr}{{\mathfrak r}}
\nc{\fraks}{{\mathfrak s}}
\nc{\frakt}{{\mathfrak t}}
\nc{\fraku}{{\mathfrak u}}
\nc{\frakv}{{\mathfrak v}}
\nc{\frakw}{{\mathfrak w}}
\nc{\frakx}{{\mathfrak x}}
\nc{\fraky}{{\mathfrak y}}
\nc{\frakz}{{\mathfrak z}}
\nc{\frakA}{{\mathfrak A}}
\nc{\frakB}{{\mathfrak B}}
\nc{\frakC}{{\mathfrak C}}
\nc{\frakD}{{\mathfrak D}}
\nc{\frakE}{{\mathfrak E}}
\nc{\frakF}{{\mathfrak F}}
\nc{\frakG}{{\mathfrak G}}
\nc{\frakH}{{\mathfrak H}}
\nc{\frakI}{{\mathfrak I}}
\nc{\frakJ}{{\mathfrak J}}
\nc{\frakK}{{\mathfrak K}}
\nc{\frakL}{{\mathfrak L}}
\nc{\frakM}{{\mathfrak M}}
\nc{\frakN}{{\mathfrak N}}
\nc{\frakO}{{\mathfrak O}}
\nc{\frakP}{{\mathfrak P}}
\nc{\frakQ}{{\mathfrak Q}}
\nc{\frakR}{{\mathfrak R}}
\nc{\frakS}{{\mathfrak S}}
\nc{\frakT}{{\mathfrak T}}
\nc{\frakU}{{\mathfrak U}}
\nc{\frakV}{{\mathfrak V}}
\nc{\frakW}{{\mathfrak W}}
\nc{\frakX}{{\mathfrak X}}
\nc{\frakY}{{\mathfrak Y}}
\nc{\frakZ}{{\mathfrak Z}}
\nc{\bbA}{{\mathbb A}}
\nc{\bbB}{{\mathbb B}}
\nc{\bbC}{{\mathbb C}}
\nc{\bbD}{{\mathbb D}}
\nc{\bbE}{{\mathbb E}}
\nc{\bbF}{{\mathbb F}}
\nc{\bbG}{{\mathbb G}}
\nc{\bbH}{{\mathbb H}}
\nc{\bbI}{{\mathbb I}}
\nc{\bbJ}{{\mathbb J}}
\nc{\bbK}{{\mathbb K}}
\nc{\bbL}{{\mathbb L}}
\nc{\bbM}{{\mathbb M}}
\nc{\bbN}{{\mathbb N}}
\nc{\bbO}{{\mathbb O}}
\nc{\bbP}{{\mathbb P}}
\nc{\bbQ}{{\mathbb Q}}
\nc{\bbR}{{\mathbb R}}
\nc{\bbS}{{\mathbb S}}
\nc{\bbT}{{\mathbb T}}
\nc{\bbU}{{\mathbb U}}
\nc{\bbV}{{\mathbb V}}
\nc{\bbW}{{\mathbb W}}
\nc{\bbX}{{\mathbb X}}
\nc{\bbY}{{\mathbb Y}}
\nc{\bbZ}{{\mathbb Z}}
\nc{\calA}{{\mathcal A}}
\nc{\calB}{{\mathcal B}}
\nc{\calC}{{\mathcal C}}
\nc{\calD}{{\mathcal D}}
\nc{\calE}{{\mathcal E}}
\nc{\calF}{{\mathcal F}}
\nc{\calG}{{\mathcal G}}
\nc{\calH}{{\mathcal H}}
\nc{\calI}{{\mathcal I}}
\nc{\calJ}{{\mathcal J}}
\nc{\calK}{{\mathcal K}}
\nc{\calL}{{\mathcal L}}
\nc{\calM}{{\mathcal M}}
\nc{\calN}{{\mathcal N}}
\nc{\calO}{{\mathcal O}}
\nc{\calP}{{\mathcal P}}
\nc{\calQ}{{\mathcal Q}}
\nc{\calR}{{\mathcal R}}
\nc{\calS}{{\mathcal S}}
\nc{\calT}{{\mathcal T}}
\nc{\calU}{{\mathcal U}}
\nc{\calV}{{\mathcal V}}
\nc{\calW}{{\mathcal W}}
\nc{\calX}{{\mathcal X}}
\nc{\calY}{{\mathcal Y}}
\nc{\calZ}{{\mathcal Z}}
\nc{\scrA}{{\mathscr A}}
\nc{\scrB}{{\mathscr B}}
\nc{\scrR}{{\mathscr R}}
\nc{\bnu}{{\bar{ \nu}}}
\nc{\olO}{\bar{\calO}}
\nc{\al}{{\alpha}} 
\nc{\be}{{\beta}}
\nc{\ga}{{\gamma}} \nc{\Ga}{{\Gamma}}
 \nc{\hGa}{\hat{\Gamma}}
\nc{\ve}{{\varepsilon}} 
\nc{\la}{{\lambda}} \nc{\La}{{\Lambda}}
\nc{\om}{\omega} \nc{\Om}{\Omega} 
\nc{\sig}{{\sigma}} \nc{\Sig}{{\Sigma}}
\nc{\tnb}{\psi_{\rm tame}}
\nc{\op}{{\on{op}}}
\nc{\ad}{{\on{ad}}}
\nc{\alg}{{\on{alg}}}
\nc{\Ad}{{\on{Ad}}}
\nc{\Adm}{{\on{Adm}}} \nc{\aff}{{\on{af}}}
\nc{\Aut}{{\on{Aut}}}
\nc{\Bun}{{\on{Bun}}}
\nc{\cha}{{\on{char}}}
\nc{\der}{{\on{der}}}
\nc{\Der}{{\on{Der}}}
\nc{\diag}{{\on{diag}}}
\nc{\End}{{\on{End}}}
\nc{\Fl}{{\calF\!\ell}}
\nc{\Gal}{{\on{Gal}}}
\nc{\Gr}{{\on{Gr}}}
\nc{\rH}{{\on{H}}}
\nc{\Hom}{{\on{Hom}}}
\nc{\IC}{{\on{IC}}}
\nc{\id}{{\on{id}}}
\nc{\Id}{{\on{Id}}}
\nc{\ind}{{\on{ind}}}
\nc{\Ind}{{\on{Ind}}}
\nc{\Lie}{{\on{Lie}}}
\nc{\Pic}{{\on{Pic}}}
\nc{\pr}{{\on{pr}}}
\nc{\Res}{{\on{Res}}}
\nc{\res}{{\on{res}}} \nc{\Sat}{{\on{Sat}}}
\nc{\s}{{\on{sc}}}
\nc{\drv}{{\on{der}}}
\nc{\sgn}{{\on{sgn}}}
\nc{\Spec}{{\on{Spec}}}\nc{\Spf}{\on{Spf}} 
\nc{\Sph}{\on{Sph}}
\nc{\St}{{\on{St}}}
\nc{\tr}{{\on{tr}}}
\nc{\Tr}{{\on{Tr}}}
\nc{\Mod}{{\mathrm{-Mod}}}
\nc{\Hilb}{{\on{Hilb}}} 
\nc{\Ext}{{\on{Ext}}} 
\nc{\vs}{{\on{Vec}}}
\nc{\ev}{{\on{ev}}}
\nc{\nO}{{\breve{\calO}}}
\nc{\tS}{{\tilde{S}}}
\nc{\spe}{{\on{sp}}}
\nc{\nscrR}{{\mathscr{R}^{\on{nr}}}}
\nc{\GL}{{\on{GL}}}
\nc{\U}{{\on{U}}}
\nc{\Gl}{\on{Gl}} 
\nc{\GSp}{{\on{GSp}}}
\nc{\gl}{{\frakg\frakl}}
\nc{\SL}{{\on{SL}}} 
\nc{\SU}{{\on{SU}}} 
\nc{\SO}{{\on{SO}}}
\nc{\Conv}{{\on{Conv}}}
\nc{\Rep}{{\on{Rep}}}
\nc{\Dom}{{\on{Dom}}}
\nc{\red}{{\on{red}}}
\nc{\act}{{\on{act}}}
\nc{\nr}{{\on{nr}}}
\nc{\str}{{\on{-}}} 
\nc{\os}{{\bar{s}}}
\nc{\oeta}{{\bar{\eta}}}
\nc{\hookto}{\hookrightarrow}
\nc{\longto}{\longrightarrow}
\nc{\leftto}{\leftarrow}
\nc{\onto}{\twoheadrightarrow}
\nc{\lonto}{\twoheadleftarrow}
\nc{\bio}{{\bar{i}}}
\nc{\bjay}{{\bar{j}}}
\nc{\bFl}{{\overline{\Fl}}} 
\nc{\bU}{{\overline{U}}}
\nc{\tGr}{{\tilde{\Gr}}}
\nc{\cGr}{\calG\! r}
\nc{\oGr}{\overline{\on{Gr}}} 
\nc{\ocGr}{\overline{\calG\! r}}
\nc{\ohtimes}{\stackrel{!}{\otimes}}
\nc{\boxtilde}{\widetilde{\boxtimes}}
\nc{\vstar}{{\varhexstar}}
\nc{\bslash}{\backslash}
\nc{\algQl}{{\bar{\bbQ}_\ell}}
\nc{\sF}{{\bar{F}}}
\nc{\nF}{{\breve{F}}}
\nc{\nW}{{W^{\on{nr}}}}
\nc{\sk}{{\bar{k}}}
\nc{\cont}{\on{c}}
\nc{\supp}{\on{supp}}
\nc{\blt}{\bullet}  
\nc{\dom}{\on{dom}}
\nc{\scon}{{\on{sc}}} 
\nc{\Affine}{\on{Aff}} 
\nc{\nscrA}{\mathscr{A}^{\on{nr}}} 
\nc{\nfraka}{{\fraka^{\on{nr}}}}
\nc{\ran}{{\rangle}}
\nc{\lan}{{\langle}}
\nc{\bk}{{\bar{k}}}
\nc{\tF}{{\tilde{F}}}
\nc{\LG}{{^\text{L}\hspace{-0.04cm}G}}
\nc{\LL}{{^\text{L}\hspace{-0.07cm}L}}
\nc{\pot}[1]{ [\hspace{-0,5mm}[ {#1} ]\hspace{-0,5mm}] }
\nc{\rpot}[1]{ (\hspace{-0,7mm}( {#1} )\hspace{-0,7mm}) }
\nc{\defined}{\hspace{0.1cm}\stackrel{\text{\rm \tiny def}}{=}\hspace{0.1cm}}
\begin{document}

\title[Geometric Satake]{Affine Grassmannians and geometric Satake equivalences}
\author[T. Richarz]{by Timo Richarz}

\address{Timo Richarz: Mathematisches Institut der Universit\"at Bonn, Endenicher Allee 60, 53115 Bonn, Germany}
\email{richarz@math.uni-bonn.de}

\maketitle

\begin{abstract} 
I extend the ramified geometric Satake equivalence of Zhu \cite{RZ} from tamely ramified groups to include the case of general connected reductive groups. As a prerequisite I prove basic results on the geometry of affine flag varieties. 
\end{abstract}

\tableofcontents
\thispagestyle{empty}

\section*{Introduction}
Let $k$ be an algebraically closed field. Let $G$  be a connected reductive group over the Laurent power series local field $F=k\rpot{t}$. The \emph{(twisted) loop group} $LG$ is the functor on the category of $k$-algebras
\[LG\colon R\;\longmapsto\;G(R\rpot{t}).\]
The loop group is representable by a strict ind-affine ind-group scheme over $k$, cf. Pappas-Rapoport \cite{PR}. Let $\calG$ be a smooth affine model of $G$ over $\calO_F=k\pot{t}$, i.e. a smooth affine group scheme over $\calO_F$ with generic fiber $G$. The \emph{(twisted) positive loop group} $L^+\calG$ is the functor on the category of $k$-algebras
\[L^+\calG\colon R\;\longmapsto\; \calG(R\pot{t}).\]
The positive loop group $L^+\calG$ is representable by a reduced affine subgroup scheme of $LG$ of infinite type over $k$. In general, the loop group $LG$ is neither reduced nor connected, whereas the positive loop group $L^+\calG$ is connected if the special fiber of $\calG$ is connected. 

Our first main result is a basic structure theorem.

\bigskip
\noindent {\bf Theorem A.}
\emph{A smooth affine model of $G$ with geometrically connected fibers $\calG$ over $\calO_F$ is parahoric in the sense of Bruhat-Tits \cite{BT2} if and only if the fpqc-quotient $LG/L^+\calG$ is representable by an ind-proper ind-scheme. In this case, $LG/L^+\calG$ is ind-projective.}
\bigskip

Theorem A should be viewed as the analogue of the characterization of parabolic subgroups in linear algebraic groups by the properness of their fppf-quotient. Note that the proof of the ind-projectivity of $LG/L^+\calG$ for parahoric $\calG$ is implicitly contained in Pappas-Rapoport \cite{PR}.

Let $\scrB(G,F)$ be the extended Bruhat-Tits building. Let $\fraka\subset \scrB(G,F)$ be a facet, and let $\calG_\fraka$ be the corresponding parahoric group scheme. The fpqc-quotient $\Fl_\fraka=LG/L^+\calG_\fraka$ is called the affine flag variety associated with $\fraka$, cf. \cite{PR}. The positive loop group $L^+\calG_\fraka$ acts from the left on $\Fl_\fraka$, and the action on each orbit factors through a smooth affine quotient of $L^+\calG_\fraka$ of finite type. This allows us to consider the category $P_{L^+\calG_\fraka}(\Fl_\fraka)$ of $L^+\calG_\fraka$-equivariant $\ell$-adic perverse sheaves on $\Fl_\fraka$. Here $\ell$ is a prime number different from the characteristic of the ground field $k$. Recall that a facet $\fraka\subset \scrB(G,F)$ is called \emph{special} if it is contained in some apartment such that each wall is parallel to a wall passing through $\fraka$. 

Our second main theorem characterizes special facets $\fraka$ in terms of the category $P_{L^+\calG_\fraka}(\Fl_\fraka)$.

\bigskip
\noindent {\bf Theorem B.} \emph{The following properties are equivalent:\smallskip\\
i) The facet $\fraka$ is special.\smallskip\\
ii) The stratification of $\Fl_\fraka$ in $L^+\calG_\fraka$-orbits satisfies the parity property, i.e. in each connected component all strata are either even or odd dimensional.\smallskip\\
iii) The category $P_{L^+\calG_\fraka}(\Fl_\fraka)$ is semi-simple.}
\bigskip

The implications $i)\Rightarrow ii)\Rightarrow iii)$ are due to Zhu \cite{RZ} whereas the implication $iii)\Rightarrow i)$ seems to be new. In fact, the following properties are equivalent to Theorem B $i)$-$iii)$ (cf. \S 3 below): \smallskip\\
\emph{iv) The special fiber of each global Schubert variety associated with $\fraka$ is irreducible. \smallskip\\
v) Each admissible set associated with $\fraka$ contains a unique maximal element. }\smallskip\\
See \S \ref{globSchub} for the definition of global Schubert varieties and admissible sets associated with a facet. Moreover, if $k$ is the algebraic closure of a finite field, each item $i)$-$v)$ is equivalent to:\smallskip\\
\emph{vi) The monodromy on Gaitsgory's nearby cycles functor associated with $\fraka$ vanishes.\smallskip\\}
See \S \ref{monpara} for the definition of Gaitsgory's nearby cycles functor in this context.
\medskip

Now let $k$ be the algebraic closure of a finite field and let $\fraka$ be a special facet. The \emph{ramified Satake category $\Sat_\fraka$ associated with $\fraka$} is the category
\[\Sat_\fraka\defined P_{L^+\calG_\fraka}(\Fl_\fraka).\]  
The ramified Satake category $\Sat_\fraka$ is semi-simple with simple objects as follows. Let $A$ be a maximal $F$-split torus such that $\fraka\subset \scrA(G,A,F)$ lies in the corresponding apartment. Since $k$ is algebraically closed, $G$ is quasi-split by Steinberg's Theorem. The centralizer $T=Z_G(A)$ is a maximal torus. Let $B$ be a Borel subgroup containing $T$. Let $I=\Gal(\sF/F)$ denote the absolute Galois group. The group $I$ acts on the cocharacter group $X_*(T)$, and we let $X_*(T)_I$ be the group of coinvariants. To every $\mu\in X_*(T)_I$, the Kottwitz morphism associates a $k$-point $t^\mu\cdot e_0$ in $\Fl_\fraka$, where $e_0$ denotes the base point. Let $Y_\mu$ be the reduced $L^+\calG$-orbit closure of $t^\mu\cdot e_0$. The scheme $Y_\mu$ is a projective variety over $k$ which is in general not smooth. The reduced locus of $\Fl_\fraka$ has an ind-presentation
\[(\Fl_\fraka)_\red\;=\;\varinjlim_{\mu\in X_*(T)_I^+}Y_\mu,\]
where $X_*(T)_I^+$ is the image of the set of dominant cocharacters under the canonical projection $X_*(T)\to X_*(T)_I$. Then the simple objects of $\Sat_\fraka$ are the intersection complexes $\IC_\mu$ of $Y_\mu$, as $\mu$ ranges over $X_*(T)_I^+$. 

Recall that for every $\calA_1,\calA_2\in \Sat_\fraka$, the \emph{convolution product} $\calA_1\star\calA_2$ is defined as an object in the bounded derived category of constructible $\ell$-adic complexes, cf. Gaitsgory \cite{Ga}, Pappas-Zhu \cite{PZ}. 

Fix a pinning of $G$ preserved by $I$, and denote by $\hat{G}$ the Langlands dual group over $\algQl$, i.e. the connected reductive group over $\algQl$ whose root datum is dual to the root datum of $G$. The Galois group $I$ acts on $\hat{G}$ via outer automorphisms, and we let $\hat{G}^I$ be the fixed points. Then $\hat{G}^I$ is a not necessarily connected reductive group over $\algQl$. Note that $X_*(T)_I=X^*(\hat{T}^I)$, and that for every $\mu\in X^*(\hat{T}^I)^+$, there exists a unique irreducible representation of $\hat{G}^I$ of highest weight $\mu$, cf. Appendix \ref{fixpointapp} for the definition of highest weight representations of $\hat{G}^I$. Let $\Rep_\algQl(\hat{G}^I)$ be the category of algebraic representations of $\hat{G}^I$. 

Our third main result describes $\Sat_\fraka$ as a tensor category.

\bigskip
\noindent {\bf Theorem C.} \emph{i) The category $\Sat_\fraka$ is stable under the convolution product $\star$, and the pair $(\Sat_\fraka, \star)$ admits a unique structure of a symmetric monoidal category such that the global cohomology functor
\[\om(\str)\defined \bigoplus_{i\in \bbZ}R^i\Ga(\Fl_\fraka,\str)\colon \Sat_\fraka\;\longto\;\vs_{\algQl}\]
is symmetric monoidal.\smallskip\\
ii) The functor $\om$ is a faithful exact tensor functor, and induces via the Tannakian formalism an equivalence of tensor categories
\begin{align*}
(\Sat_\fraka,\star)\;&\overset{\simeq}{\longto}\;(\Rep_\algQl(\hat{G}^I),\otimes),\\
\calA&\longmapsto\;\om(\calA)
\end{align*}
which is uniquely determined up to inner automorphisms of $\hat{G}^{I}$ by elements in $\hat{T}^{I}$ by the property that $\om(\IC_\mu)$ is the irreducible representation of highest weight $\mu$. }
\bigskip

We also prove a variant of Theorem C which includes Galois actions, and where $k$ may be replaced by a finite field and where $G$ is assumed to be quasi-split over $F$, cf. Theorem \ref{Zhu1} below. 

Theorem C is due to Zhu \cite{RZ} in the case of tamely ramified groups. With Theorem B at hand, our method follows the method of \cite{RZ} with minor modifications. The proof uses the unramified Satake equivalence as a black box which we will now recall briefly. 

The affine Grassmannian $\Gr_G$ is the fpqc-sheaf associated with the functor on the category of $F$-algebras $\Gr_G\colon R\mapsto G(R\rpot{z})/G(R\pot{z})$ for an additional formal variable $z$. Denote by $L^+_zG\colon R\mapsto G(R\pot{z})$ the positive loop group formed with respect to $z$. Then $L^+_zG$ acts on $\Gr_G$ from the left. Fix $\sF$ the completion of a separable closure of $F$. The unramified Satake category $\Sat_{G,\sF}$ is the category 
\[\Sat_{G,\sF}\defined P_{L^+_zG_\sF}(\Gr_{G,\sF}),\]
cf. \cite{Ri2}. The category $\Sat_{G,\sF}$ is equipped with the structure of a neutralized Tannakian category with respect to the convolution product $\star$. The \emph{unramified Satake equivalence} is an equivalence of abelian tensor categories 
\[(\Sat_{G,\sF},\star)\;\simeq\;(\Rep_{\algQl}(\hat{G}),\star),\] 
which is uniquely determined up to inner automorphism by elements in $\hat{T}$, cf. Mirkovi\'c-Vilonen \cite{MV}, Richarz \cite{Ri2}.

The main ingredient in the proof of Theorem C is the \emph{BD-Grassmannian $\Gr_\fraka$ associated with $\fraka$} (BD = Beilinson-Drinfeld) which is a strict ind-projective ind-scheme over $\calO_F$ such that in the generic (resp. special) fiber 
\[\Gr_{\fraka,\eta}\;\simeq\;\Gr_G \hspace{0.6cm}\text{(resp. $\Gr_{\fraka,s}\;\simeq\;\Fl_\fraka$)}.\]
This allows us to consider Gaitsgory's nearby cycles functor $\Psi_\fraka\colon \Sat_{G,\sF}\to \Sat_\fraka$ associated with $\Gr_\fraka\to\Spec(\calO_F)$. The symmetric monoidal structure with respect to $\star$ on the category $\Sat_{G,\sF}$ in the generic fiber of $\Gr_\fraka$ extends to the category $\Sat_\fraka$ in the special fiber of $\Gr_\fraka$. This equips $(\Sat_\fraka,\star)$ with a symmetric monoidal structure. Here, the key fact is the vanishing of the monodromy of $\Psi_\fraka$ for special facets $\fraka$, cf. item $vi)$ in the list below Theorem B. It is then not difficult to exhibit $(\Sat_\fraka,\star)$ as a Tannakian category with fiber functor $\om$. Theorem B $iii)$ implies that the neutral component $\Aut^\star(\om)^0$ of the $\algQl$-group of tensor automorphisms is reductive. In fact, the nearby cycles construction above realizes $\Aut^\star(\om)$ as a subgroup of $\hat{G}$ via the unramified Satake equivalence. This equivalence equips $\hat{G}$ with a canonical pinning, and it is easy to identify $\Aut^\star(\om)=\hat{G}^I$ as the subgroup of $\hat{G}$ where $I$ acts by pinning preserving automorphisms.  

In \cite{HRo}, Haines and Rostami establish the Satake isomorphism for Hecke algebras of special parahoric subgroups. Theorem C may be seen as a geometrization of this isomorphism in the case that the facet $\fraka$ is \emph{very special}. Note that in Theorem C above the residue field $k$ was assumed to be algebraically closed, and hence the notion of special facets and very special facets coincide (cf. Definition \ref{veryspecialdfn} below). However, some additional input is needed to identify Theorem C as a geometrization of the isomorphism given in \cite{HRo} in this case. The case of quasi-split connected reductive groups and special facets which are not neccessarily very special will be adressed on another occasion \cite{Ri3}. \smallskip\\  

Let us briefly explain the structure of the paper. \S 1 is devoted to the proof of Theorem A. In \S 2, we introduce the global affine Grassmannian associated with a facet, and define the global Schubert varieties. In \S 3, we prove Theorem B. In \S 4, we collect some facts from the unramified and ramified geometric Satake equivalences, and explain the proof of Theorem C including the case of wild ramification. Appendix \ref{levilemapp} supplements \S 1 and concerns the construction of Bruhat-Tits group schemes associated with Levi subgroups which we need in \S 1.5 for the proof of the ind-projectivity. In Appendix \ref{reductivedescentapp} we give a descent result on reductive groups explained to me by Brian Conrad which we use in \S 2 to extend a parahoric group scheme over a discrete valuation ring to some smooth affine curve. Finally, Appendix \ref{fixpointapp} complements \S 4. Here we introduce highest weight representations for the group of fixed points in a split connected reductive group under pinning preserving automorphisms.

\begin{ack}
This paper is part of the author's doctoral thesis. First of all I thank my advisor M. Rapoport for his steady encouragement and advice during the process of writing. I thank B. Conrad for explaining to me the beautiful proof of the descent result in Appendix \ref{reductivedescentapp}. Furthermore, I thank U. G\"ortz, T. J. Haines, E. Hellmann, B. Levin and P. Scholze for useful dicussions around the subject. I thank the referees for their carful reading of the manuscript and their useful comments which substantially improved the readability. I am grateful to the stimulating working atmosphere in Bonn and for the funding by the Max-Planck society.
\end{ack}

\noindent {\bf Notation.} For a complete discretely valued field $F$, we denote by $\calO_F$ the ring of integers with residue field $k$. We let $\sF$ be the completion of a fixed separable closure, and $\Ga=\Gal(\sF/F)$ the absolute Galois group. The completion of the maximal unramified subextension of $F$ is denoted ${\nF}$ with ring of integers $\calO_{\nF}$ and residue field $\bar{k}$.

\section{Affine Grassmannians}

In \S 1.1 and 1.2, we collect some facts on affine Grassmannians from the literature, cf. \cite{He}, \cite{Levin}, \cite{PR}, \cite{PZ}, \cite{Z}. In \S 1.3-1.5, we prove Theorem A from the introduction.

\subsection{Affine flag varieties}\label{affflagpara} Let $k$ be either a finite or an algebraically closed field, and let $G$ be a connected reductive group over the Laurent series local field $F=k\rpot{t}$. The \emph{(twisted) loop group $LG$} is the group functor on the category of $k$-algebras
\[LG\colon R\;\longto\; G(R\rpot{t}).\]
The loop group $LG$ is representable by a strict ind-affine ind-group scheme, cf. \cite[\S 1]{PR}. Let $\fraka$ be a facet in the enlarged Bruhat-Tits building $\scrB(G,F)$. Denote by $\calG_\fraka$ the associated parahoric group scheme over $\calO_F$, i.e. the neutral component of the unique smooth affine group scheme over $\calO_F$ such that the generic fiber is $G$, and such that the $\calO_F$-points are the pointwise fixer of $\fraka$ in $G(F)$. The \emph{(twisted) positive loop group $L^+\calG_\fraka$} is the group functor on the category of $k$-algebras 
\[L^+\calG_\fraka\colon  R\;\longto\; \calG_\fraka(R\pot{t}).\] 
The positive loop group $L^+\calG_\fraka$ is representable by a reduced affine connected group scheme of infinite type over $k$. Then $L^+\calG_\fraka\subset LG$ is a closed subgroup scheme. The \emph{(partial) affine flag variety $\Fl_\fraka$} is the fpqc-sheaf on the category of affine $k$-algebras associated with the functor
\[\Fl_\fraka\colon  R\;\longto\; LG(R)/L^+\calG_\fraka(R).\]
The affine flag variety $\Fl_\fraka$ is a strict ind-scheme of ind-finite type and separated over $k$, cf. \cite[Theorem 1.4]{PR}. We explain some of its basic structure. For the rest of this subsection, let $k$ be an algebraically closed field. \medskip\\
\emph{Connected components of $\Fl_\fraka$:} In general, $\Fl_\fraka$ is not connected. The connected components are given as follows, cf. \cite{PR}. 

Let $\pi_1(G)$ be the algebraic fundamental group of $G$, cf. Borovoi \cite{Bo}. The group $\pi_1(G)$ is a finitely generated abelian group, and can be defined as the quotient of the coweight lattice by the coroot lattice of $G_\sF$. Since $k$ is algebraically closed, $I=\Gal(\sF/F)$ is the inertia group. Then $I$ acts on $\pi_1(G)$, and we denote by $\pi_1(G)_I$ the group of coinvariants. There is a functorial surjective group morphism $\kappa_G\colon  LG(k)\to \pi_1(G)_I$, cf. Kottwitz \cite[\S 7]{Kott}. By \cite[\S 2.a.2]{PR}, the morphism $\kappa_G$ extends to a locally constant morphism of ind-group schemes
\[\kappa_G\colon  LG\;\longto\; \underline{\pi_1(G)}_I,\]
and induces an isomorphism on the groups of connected components $\pi_0(LG)\simeq \pi_1(G)_I$. Since $L^+\calG_\fraka$ is connected, the morphism $\kappa_G$ also defines a bijection $\pi_0(\Fl_\fraka)\simeq \pi_1(G)_I$.\medskip\\
\emph{Schubert varieties in $\Fl_\fraka$:} The reduced $L^+\calG_\fraka$-orbits in $\Fl_\fraka$ are called Schubert varieties. Their basic structure is as follows. 

Let $A\subset G$ be a maximal $F$-split torus such that $\fraka$ is contained in the apartment $\scrA=\scrA(G,A,F)$ of $\scrB(G,F)$. Let $N=N_G(A)$ be the normalizer of $A$, and denote by $W=N(F)/T_1$ the Iwahori-Weyl group where $T_1\subset T(F)$ is the unique parahoric subgroup, cf. \cite{HR}.  

\begin{dfn}\label{schubertdfn}
For $w\in W$, the \emph{Schubert variety $Y_w$ associated with $w$} is the reduced $L^+\calG_\fraka$-orbit closure 
\[Y_w\defined \overline{L^+\calG_\fraka \cdot n_w\cdot e_0} \;\subset\;\Fl_\fraka,\]
where $n_w$ is a representative of $w$ in $LG(k)$, and $e_0$ is the base point in $\Fl_\fraka$. 
\end{dfn}

Let us justify the definition. The orbit map $L^+\calG_\fraka\to \Fl_\fraka$, $g\mapsto g\cdot n_we_0$ factors through some smooth affine quotient of $L^+\calG_\fraka$ of finite type. The Schubert variety $Y_w\subset \Fl_\fraka$ is the scheme theoretic closure of this morphism, and hence a reduced separated scheme of finite type over $k$. Let $\mathring{Y}_w$ denote the $L^+\calG_\fraka$-orbit of $n_w\cdot e_0$ in $Y_w$. Then $\mathring{Y}_w$ is a smooth open dense subscheme of $Y_w$. Since $L^+\calG_\fraka$ is connected, $\mathring{Y}_w$ is irreducible and so is $Y_w$. It follows that $Y_w$ is a variety over $k$.

The Iwahori-Weyl group $W$ acts on the apartment $\scrA$ by affine transformations. Let $\fraka_C$ be an alcove containing $\fraka$ in its closure. The choice of $\fraka_C$ equips $W$ with a quasi-Coxeter structure $(l,\leq)$, i.e. the simple reflections are the reflections at the walls meeting the closure of $\fraka_C$. Let $W_\fraka=N(F)\cap\calG_\fraka(\calO_F)/T_1$ the subgroup of $W$ associated with $\fraka$, cf. \cite{HR}. The group $W_\fraka$ identifies with the subgroup generated by the reflections at the walls passing through $\fraka$. For an element $w\in W$, denote by $w^\fraka$ the representative of minimal length in $w\cdot W_\fraka$. For every $w\in W$, there is a unique representative ${_\fraka w}^\fraka$ of maximal length in the set
\[\{(w'ww'')^\fraka\;|\;w',w''\in W_\fraka\},\]
cf.  \cite[Lemma 2.15]{Ri1}. Let ${_\fraka W}^\fraka\subset W$ be the subset of all $w\in W$ such that $w={_\fraka w}^\fraka$. Then the canonical map ${_\fraka W}^\fraka\to W_\fraka\bslash W/ W_\fraka$ is bijective. The Bruhat decomposition, cf. \cite{HR}, implies that there is a set-theoretically disjoint union into locally closed strata,
\begin{equation}\label{stratadecom}
\Fl_\fraka \;=\; \coprod_{w\in {_\fraka W}^\fraka}\mathring{Y}_w.
\end{equation}

\begin{prop} \label{schubertstr} Let $w\in {_\fraka W}^\fraka$.\smallskip\\
i) The scheme $\mathring{Y}_{w}$ is of dimension $l(w)$. \smallskip\\
ii) The Schubert variety $Y_w$ is a proper $k$-variety, and
\[Y_w\;=\; \coprod_{v\leq w}\mathring{Y}_v,\]
where $v\in {_\fraka W}^\fraka$ and $\leq$ denotes the Bruhat order on $W$.  
\end{prop}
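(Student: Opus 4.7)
The plan is to deduce the proposition from the Iwahori case by means of the natural projection to partial flag varieties. Choose an alcove $\fraka_C$ whose closure contains $\fraka$; the inclusion $\calG_{\fraka_C}\subseteq\calG_\fraka$ induces an $L^+\calG_\fraka$-equivariant morphism
\[\pi\colon\Fl_{\fraka_C}\;\longto\;\Fl_\fraka.\]
This map is a Zariski-locally trivial fibration whose typical fiber, via evaluation at $t=0$, is identified with the finite-dimensional flag variety of the reductive quotient of the special fiber of $\calG_\fraka$. In particular $\pi$ is smooth and proper, with equidimensional fibers of dimension $d_\fraka\defined l(w_{0,\fraka})$, where $w_{0,\fraka}$ is the longest element of $W_\fraka$. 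As input I take the Iwahori case, proved in \cite[\S 1]{PR}: for every $u\in W$ the stratum $\mathring{Y}_u^{\fraka_C}\subset\Fl_{\fraka_C}$ is an affine space of dimension $l(u)$, and $Y_u^{\fraka_C}=\bigsqcup_{v\leq u}\mathring{Y}_v^{\fraka_C}$ is proper.

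For (i), the $L^+\calG_\fraka$-orbit $\mathring{Y}_w$ decomposes into Iwahori orbits $L^+\calG_{\fraka_C}\cdot n_v e_0$ indexed by the left cosets $vW_\fraka\subset W_\fraka w W_\fraka$. For each such $v$, the preimage under $\pi$ is $\pi^{-1}(L^+\calG_{\fraka_C}\cdot n_v e_0)=\bigsqcup_{u\in vW_\fraka}\mathring{Y}_u^{\fraka_C}$ of dimension $l(v^\fraka)+d_\fraka$, so the Iwahori orbit itself has dimension $l(v^\fraka)$. Since $\mathring{Y}_w$ is an irreducible $L^+\calG_\fraka$-orbit, its dimension equals the maximum of $l(v^\fraka)$ over all $vW_\fraka\subset W_\fraka w W_\fraka$, which is precisely $l({_\fraka w}^\fraka)=l(w)$ by definition of ${_\fraka W}^\fraka$.

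For (ii), let $w_{\max}$ denote the unique element of maximal length in the double coset $W_\fraka w W_\fraka$. Since $w_{\max}\in wW_\fraka$, the Iwahori orbit $L^+\calG_{\fraka_C}\cdot n_{w_{\max}} e_0$ coincides with $L^+\calG_{\fraka_C}\cdot n_w e_0$, which by (i) is dense in $\mathring{Y}_w$. Taking closures and using properness of $\pi$, one deduces $\pi(Y_{w_{\max}}^{\fraka_C})=Y_w$; in particular $Y_w$ is proper. (Alternatively, $Y_w$ is a closed finite-type subscheme of the ind-projective $\Fl_\fraka$, by \cite[\S 1.4]{PR}.) Pushing forward the Iwahori stratification yields
\[Y_w\;=\;\bigcup_{u\leq w_{\max}}L^+\calG_{\fraka_C}\cdot n_u e_0,\]
where each summand lies in the unique $L^+\calG_\fraka$-orbit $\mathring{Y}_{u^\sharp}$ with $u^\sharp\in{_\fraka W}^\fraka$ the double coset representative of $u$. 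By $L^+\calG_\fraka$-invariance of $Y_w$, an orbit $\mathring{Y}_v$ lies in $Y_w$ as soon as it meets the above union. The main technical obstacle, which I expect to be the decisive point, is thus the purely combinatorial identification $\{u^\sharp:u\leq w_{\max}\}=\{v\in{_\fraka W}^\fraka:v\leq w\}$. This is a standard compatibility between the Bruhat order on $W$ and the induced partial order on double coset representatives, and is established in \cite[Lemma 2.15]{Ri1}.
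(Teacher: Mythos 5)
Your proposal is correct, and it is organized differently from the paper's own argument. The paper outsources part (i) and the stratification in (ii) to \cite[Proposition 2.8]{Ri2}, and proves properness separately by constructing a Demazure resolution $D_{\tilde{w}}\to\Fl_{\fraka_C}\to\Fl_\fraka$ (after first translating by a length-zero element $\tau$ into the neutral component, where reduced words make sense) and pushing properness forward along the surjection $D_{\tilde w}\to Y_w$. You instead derive everything at once from the Iwahori case by analyzing the proper, smooth, locally trivial fibration $\pi\colon\Fl_{\fraka_C}\to\Fl_\fraka$ with fiber the flag variety of the reductive quotient of $\calG_{\fraka,s}$: fiber dimension counting yields (i), and pushing $Y^{\fraka_C}_{w_{\max}}=\bigsqcup_{u\le w_{\max}}\mathring Y^{\fraka_C}_u$ forward along $\pi$ yields the properness and the stratification in (ii), up to the combinatorial identity $\{{}_\fraka u^\fraka: u\le w_{\max}\}=\{v\in{}_\fraka W^\fraka : v\le w\}$. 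The two proofs are close in spirit (both descend properness along a surjection from a known proper scheme, both exploit that the Iwahori orbit of $n_w e_0$ is already dense in the $L^+\calG_\fraka$-orbit, written in the paper as ``we may assume $Y_w$ is the reduced $L^+\calG_{\fraka_C}$-orbit closure''), but yours has the advantage of being self-contained once the Iwahori case is granted, while the paper's is shorter because it can lean on \cite{Ri2}. Two bibliographic caveats: the properness of Iwahori Schubert varieties in the twisted setting is proved via Demazure resolutions in \cite[\S 8]{PR}, not \S 1; and \cite[Lemma 2.15]{Ri1} as cited in this paper only supplies the existence of the maximal-length representative ${}_\fraka w^\fraka$ --- you should double-check that the Bruhat order compatibility you invoke is actually established there, or supply a short independent proof of it.
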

\begin{proof}
Part i), and the orbit stratification in part ii) is proven in \cite[Proposition 2.8]{Ri1}. Let us show that $Y_w$ is proper. Note that the Iwahori $L^+\calG_{\fraka_C}$ is a closed subgroup scheme of $L^+\calG_\fraka$. After multiplication with some $\tau\in LG(k)$ contained in the stabilizer of $L^+\calG_{\fraka_C}$, we may assume that $Y_w$ is the reduced $L^+\calG_{\fraka_C}$-orbit closure of $n_w\cdot e_0$ contained in the neutral component $(\Fl_\fraka)^0$. Let $\tilde{w}$ be a reduced decomposition of $w$ as a product of simple reflections, and let $\pi_{\tilde{w}}\colon D_{\tilde{w}}\to Y_w$ be the associated Demazure resolution, \cite[\S 8]{PR}. In [\emph{loc. cit.}] full flag varieties are considered, but the composition $D_{\tilde{w}}\to \Fl_{\fraka_C}\to \Fl_\fraka$ factors through $Y_w$ defining $\pi_{\tilde{w}}$. By \cite[Proposition 8.8]{PR}, the scheme $D_{\tilde{w}}$ is projective, hence proper. Because $\pi_{\tilde{w}}$ is surjective and $Y_w$ separated, it follows that $Y_w$ is proper.
\end{proof}

\begin{cor}\label{flagindproper} i) The strict ind-scheme $\Fl_\fraka$ is ind-proper. The reduced locus admits an ind-presentation by Schubert varieties
\[(\Fl_\fraka)_\red\;=\;\varinjlim_{w}Y_w.\]
ii) The stabilizers of the $L^+\calG_\fraka$-action on $\Fl_\fraka$ are connected.
\end{cor}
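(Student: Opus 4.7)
The plan is to deduce both assertions directly from the set-theoretic stratification \eqref{stratadecom} and Proposition~\ref{schubertstr}, using that properness is insensitive to nilpotents.

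First I would handle the ind-presentation of the reduced locus. By \eqref{stratadecom}, every point of $\Fl_\fraka$ lies in some stratum $\mathring{Y}_w$ and hence in the Schubert variety $Y_w$, so set-theoretically $(\Fl_\fraka)_\red=\bigcup_{w\in{_\fraka W}^\fraka} Y_w$. The inclusions $Y_v\hookrightarrow Y_w$ for $v\le w$ in ${_\fraka W}^\fraka$, provided by Proposition~\ref{schubertstr}~ii), turn the family $\{Y_w\}$ into a directed system of closed immersions of proper reduced $k$-schemes (properness of $Y_w$ was established in Proposition~\ref{schubertstr}). Passing to the colimit (equivalently, to the filtered colimit indexed by finite subsets of ${_\fraka W}^\fraka$ via finite unions inside $\Fl_\fraka$) yields the claimed ind-presentation $(\Fl_\fraka)_\red=\varinjlim_w Y_w$.

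Next I would deduce ind-properness of $\Fl_\fraka$ itself. Since $\Fl_\fraka$ is a strict ind-scheme of ind-finite type, we may write $\Fl_\fraka=\varinjlim X_n$ as a directed colimit of closed immersions $X_n\hookrightarrow X_{n+1}$ of finite-type $k$-schemes. Properness of $X_n$ is equivalent to properness of $(X_n)_\red$, so it suffices to prove the latter. Because $(X_n)_\red$ is Noetherian it has finitely many irreducible components $Z_1,\dots,Z_r$; the generic point $\eta_i$ of $Z_i$ lies in a unique stratum $\mathring{Y}_{w_i}$, and since $Y_{w_i}$ is closed in $\Fl_\fraka$ we get $Z_i=\overline{\{\eta_i\}}\subseteq Y_{w_i}$. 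Therefore $(X_n)_\red\subseteq\bigcup_{i=1}^r Y_{w_i}$, a finite union of proper $k$-schemes, hence proper; being closed in a proper scheme, $(X_n)_\red$ is itself proper.

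The only point of substance is the local finiteness built into the second step, namely that a finite-type closed subscheme of $\Fl_\fraka$ meets only finitely many orbits; this is extracted cleanly from the Noetherian property of $(X_n)_\red$ and the fact that each $Y_w$ is closed. Everything else is a direct packaging of Proposition~\ref{schubertstr}, so I do not anticipate any serious obstacle.
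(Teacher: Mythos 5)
Your proof is correct and takes essentially the same route as the paper's one-line argument: reduce properness to the reduced locus, then use the orbit stratification \eqref{stratadecom} together with the properness of the Schubert varieties $Y_w$ from Proposition~\ref{schubertstr}. The local finiteness step you single out (a finite-type closed subscheme of $\Fl_\fraka$ meets only finitely many strata, hence sits inside a finite union of the closed proper $Y_w$) is exactly the detail the paper leaves implicit, and you supply it correctly.
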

\begin{proof}
Properness can be checked on the underlying reduced subscheme. Thus, part i) follows from \eqref{stratadecom}, and Proposition \ref{schubertstr}. For ii), it is enough to prove that for $w\in W$, the $L^+\calG_\fraka$-stabilizer in the point $n_w\cdot e_0$ is connected where $n_w\in LG(k)$ denotes a representative of $w$. The stabilizer $L^+\calG_{\fraka,w}$ is the closed subgroup scheme given by
\[L^+\calG_{\fraka,w}=L^+\calG_{\fraka}\cap (n_w\cdot L^+\calG_\fraka\cdot n_w^{-1}).\] 
We have $n_w\cdot L^+\calG_\fraka\cdot n_w^{-1}=L^+\calG_{w \fraka}$ where $w \fraka$ is the $w$-translate of $\fraka$ in the apartment $\scrA$. For a subset $\Omega\subset \scrA$ whose projection onto the semi-simple part $\scrA_{\rm ss}$ is bounded, Bruhat and Tits associate the unique smooth affine $\calO_F$-group scheme $\calG_\Omega$ with generic fiber $G$ and connected special fiber such that the $\calO_F$-points are the pointwise fixer of $\Omega$. This is in accordance with our previous notation, i.e. if $\Omega$ is a facet, then $\calG_\Omega$ is the parahoric group scheme as above. We have $L^+\calG_{\fraka,w} = L^+\calG_{\fraka}\cap L^+\calG_{w\fraka}=L^+\calG_{\fraka\cup w\fraka}$ by standard properties of Bruhat-Tits groups (cf. \cite[7.1.11]{BT1}) and the latter is connected, e.g. Lemma \ref{groupscheme} ii) below. This proves ii). 
\end{proof}

\begin{rmk}
i) Note that $\calG_{\fraka\cup w\fraka}=\calG_\Omega$ where $\Omega$ denotes the convex hull of $\fraka\cup w\fraka$ in $\scrA$, cf. \cite[7.1.9]{BT1}. If $\fraka$ is hyperspecial, i.e. $\calG_\fraka$ is connected reductive, then Corollary \ref{flagindproper} ii) specializes to \cite[Lemme 2.3]{NP}. \smallskip\\
ii) If $\cha(k)=0$, the connectedness of the stabilizers follows from the simply connectedness of the $L^+\calG_\fraka$-orbits as follows: we have a finite \'etale cover
\[L^+\calG_\fraka/L^+\calG_{\fraka,w}^0\;\longto\;  L^+\calG_\fraka/L^+\calG_{\fraka,w} \;\simeq\;\mathring{Y}_w,\]
where $L^+\calG_{\fraka,w}^0$ denotes the neutral component. Since $L^+\calG_\fraka$ is connected and $\mathring{Y}_w$ is simply connected, the cover is trivial, i.e. $L^+\calG_{\fraka,w}=L^+\calG_{\fraka,w}^0$. The orbit $\mathring{Y}_w$ is the extension of a homogenous space by an affine bundle which is, in positive characteristic, not simply connected for the \'etale topology in general, e.g. if $\calG_\fraka$ is an Iwahori group scheme and $w$ is a simple reflection, then $\mathring{Y}_w=\bbA^1_k$ whose fundamental group is huge for $\cha(k)>0$.
\end{rmk}

\subsection{BD-Grassmannians over curves} Let $S$ be a scheme, and let $X$ be a separated $S$-scheme. Let $\calG$ be a fpqc-sheaf of groups over $X$. For a $S$-scheme $T$, denote by $X_T=X\times_ST$ the fiber product. If $x\colon  T\to X$ is a morphism of $S$-schemes, let $\Ga_x\subset X_T$ be the graph.

\begin{dfn} The \emph{BD-Grassmannian $\Gr(\calG,X)$} is the contravariant functor on the category of $S$-schemes parametrizing isomorphism classes of triples $(x,\calF,\al)$ with
\[
\begin{cases}
x\colon T\to X\; \text{is a morphism of $S$-schemes};\\
\calF \;\text{a right $\calG_T$-torsor on $X_T$};\\
\al\colon \calF_{X_T\bslash\Ga_x}\overset{\simeq}{\longto}\calF^0|_{X_T\bslash\Ga_x}\;\text{a trivialization},
\end{cases}
\]
where $\calF^0$ denotes the trivial torsor. Two triples $(x,\calF,\al)$ and $(x',\calF',\al')$ are isomorphic if $x=x'$, and as torsors $\calF\simeq \calF'$ compatible with the trivializations $\al$ and $\al'$.
\end{dfn}

If $\calG=G\times_k X$ is constant, then we write $\Gr(G,X)=\Gr(\calG,X)$. 

\begin{rmk}\label{Heinrmk}
For constant groups the BD-Grassmannian (=Beilinson-Drinfeld) is defined by Beilinson and Drinfeld in \cite{BD}. Note that in general the BD-Grassmannian $\Gr(\calG,X)$ is a special case of Heinloth's construction \cite[\S 2 Example (2)]{He}.
\end{rmk}

There is the canonical projection $\Gr(\calG,X)\to X$, $(x,\calF,\al)\mapsto x$. The construction is functorial in the following sense. If $\tau\colon  \calG'\to\calG$ is a morphism of fpqc-sheaves of groups over $X$, then there is a morphism of functors
\begin{align}\label{fungroup}
\begin{split}
 \tau_*\colon \Gr(\calG',X)&\longto\Gr(\calG,X)\\
(x,\calF,\al)&\longmapsto (x,\tau_*\calF,\tau_*\al).
\end{split}
\end{align} 
If $f\colon  Y\to X$ is a morphism of $S$-schemes, then there is a morphism of functors
\begin{align}\label{funbase}
\begin{split}
f^*\colon \Gr(\calG,X)\times_XY&\longto\Gr(\calG_Y,Y)\\
((x,\calF,\al),y)&\longmapsto(y,\calF_Y,\al_Y).
\end{split}\end{align}

\begin{comment}
\begin{lem}\label{finprojsys} The formation of $\Gr(\calG,X)$ commutes with finite projective limits in the first variable, i.e. if $\{\calG_j\}_{j\in J}$ is a finite projective system of fpqc-sheaves of groups over $X$, then $\calG=\varprojlim_j\calG_j$ is an fpqc-sheaf of groups over $X$ and 
\begin{align*}
\Gr(\calG,X)\;&\overset{\simeq}{\longto}\;\varprojlim_j\Gr(\calG_j,X)\\
(x,\calF,\al)\;&\longmapsto ((x,\pi_{j,*}\calF,\pi_{j,*}\al))_{j\in J},
\end{align*}
where $\pi_j\colon \calG\to\calG_j$ is the canonical projection. 
\end{lem}
\begin{proof}
Let $T$ be a $S$-scheme. If $(x,\calF,\al)\in \Gr(\calG,X)(T)$, then $\calF\simeq \varprojlim_j(\pi_{j,*}\calF_j)$ and $\al= \varprojlim_j(\pi_{j,*}\al)$ which implies the injectivity. One checks that every inverse system of isomorphism classes in $\varprojlim_j\Gr(\calG_j,X)$ can be represented by an isomorphism class in $\Gr(\calG,X)$.
\end{proof}
\end{comment}

For the rest of the subsection, let $S=\Spec(k)$ be the spectrum of a field $k$, and let $X$ be a smooth connected curve over $k$. 

\begin{lem}\label{repGln}
Let $\calE$ be a vector bundle on $X$. Then $\Gr(\GL(\calE),X)\to X$ is representable by an ind-proper strict ind-scheme which is, Zariski locally on $X$, ind-projective.
\end{lem}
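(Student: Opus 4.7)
The plan is to trivialize $\calE$ Zariski-locally on $X$, reduce to the case of $\GL_n$ over an affine open, and exhibit the resulting BD-Grassmannian as a filtered union of projective schemes obtained as closed subschemes of standard relative Quot/Grassmannian schemes.

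First, by the base change formula \eqref{funbase}, for every open $U\subset X$ one has $\Gr(\GL(\calE),X)\times_XU\simeq \Gr(\GL(\calE)|_U,U)$. Choosing a finite open affine cover $\{U_i\}$ of $X$ trivializing $\calE$, and invoking \eqref{fungroup} for the isomorphism $\GL(\calE)|_{U_i}\simeq \GL_n\times U_i$, one reduces to proving that $\Gr(\GL_n,U)$ is ind-projective over $U$ for each affine $U=\Spec A$. The statements that $\Gr(\GL(\calE),X)\to X$ is a strict ind-scheme and that it is globally ind-proper then follow by gluing these local presentations along the overlaps $U_i\cap U_j$.

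For such $U$, unwinding the definition shows that $\Gr(\GL_n,U)(T)$ classifies triples $(x,\calF,\al)$ with $\calF$ a rank-$n$ vector bundle on $U_T$ and $\al\colon \calF|_{U_T\smallsetminus \Ga_x}\simeq \calO^n$ a trivialization away from the effective Cartier divisor $\Ga_x$. Since $\calF$ is torsion-free and $U_T\smallsetminus\Ga_x$ is schematically dense, $\al$ extends uniquely to an injection $\calF\hookrightarrow \calO_{U_T}(*\Ga_x)^n\defined\varinjlim_N \calO_{U_T}(N\Ga_x)^n$. For $N\geq 0$ define the subfunctor $\Gr^{(N)}\subset \Gr(\GL_n,U)$ by the ``bounded denominator/zero'' condition
\[
\calO_{U_T}(-N\Ga_x)^n\;\subset\;\calF\;\subset\; \calO_{U_T}(N\Ga_x)^n
\]
inside $\calO_{U_T}(*\Ga_x)^n$. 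Sending $(x,\calF,\al)$ to $\calF/\calO_{U_T}(-N\Ga_x)^n$, viewed as a subsheaf of the locally-free $\calO_{2N\Ga_x}$-module $\calM_N\defined\calO_{U_T}(N\Ga_x)^n/\calO_{U_T}(-N\Ga_x)^n$, identifies $\Gr^{(N)}$ with the closed subfunctor of the relative Quot scheme $\on{Quot}(\calM_N/2N\Ga_x/U)$ obtained by fixing the generic rank of the quotient and requiring the quotient to be locally free over the base. Since $2N\Ga_x\to U$ is finite flat and $\calM_N$ has finite rank, this Quot scheme is projective over $U$, and hence so is $\Gr^{(N)}$.

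Finally, every $T$-point of $\Gr(\GL_n,U)$ with quasi-compact $T$ lies in some $\Gr^{(N)}$: locally on $T$, the coherent submodule $\calF\subset \calO_{U_T}(*\Ga_x)^n$ is generated by sections with uniformly bounded pole order along $\Ga_x$, and dually contains $\calO_{U_T}(-N\Ga_x)^n$ for $N$ sufficiently large. This exhibits $\Gr(\GL_n,U)=\varinjlim_N\Gr^{(N)}$ as a strict ind-projective ind-scheme over $U$, completing the reduction. The main technical hurdle will be the careful passage from the abstract $\GL_n$-torsor-with-trivialization data to the coherent-submodule picture and the verification that $\Gr^{(N)}$ is cut out as a \emph{closed} subfunctor of $\on{Quot}(\calM_N/2N\Ga_x/U)$ via the locally-free quotient condition; the boundedness argument placing every quasi-compact test scheme into some level $\Gr^{(N)}$ is then a standard noetherian approximation.
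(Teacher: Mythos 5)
The paper's proof of this lemma is a single citation to \cite[Lemma 2.4]{Ri2}, so there is no detailed argument in the text to compare against; what you have written is a self-contained proof, and it is essentially the standard lattice/Quot-scheme argument that underlies that reference. Your reduction via \eqref{funbase} to the locally trivial case, the Beauville--Laszlo-style passage from the torsor-with-trivialization data to a coherent submodule of $\calO_{U_T}(*\Ga_x)^n$, the bounded-lattice filtration $\Gr^{(N)}$, and the identification of $\Gr^{(N)}$ with (a union of open-and-closed pieces of) $\on{Quot}(\calM_N/2N\Ga_x/U)$ are exactly the right ingredients and in the right order, so the proposal is correct.

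Two small points worth tightening. First, the phrase ``requiring the quotient to be locally free over the base'' is not an extra condition: points of $\on{Quot}(\calM_N/2N\Ga_x/U)$ are by definition $T$-flat quotients, and since $2N\Ga_x\to T$ is finite flat, $T$-flat coherent sheaves on it are automatically $T$-locally free; so $\Gr^{(N)}$ is a union of connected components of the Quot scheme rather than a nontrivially cut-out closed subfunctor. Second, and more substantively, the identification of $\Gr^{(N)}$ with that Quot scheme has a genuine content in both directions that you should make explicit: given a $T$-flat quotient $\calQ$ of $\calM_N$, the preimage $\calF\subset\calO_{U_T}(N\Ga_x)^n$ of its kernel is $T$-flat (as an extension of $T$-flat sheaves) with fibers over $T$ that are torsion-free, hence locally free, coherent sheaves on the smooth curve $U_t$, and so $\calF$ is a vector bundle on $U_T$ by the fiberwise criterion; conversely, if $\calF$ is a vector bundle then $\calF_t\to\calO(N\Ga_x)^n|_{U_t}$ stays injective on fibers (torsion-freeness on the curve again), which by the local criterion for flatness gives $T$-flatness of the quotient. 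It is precisely this equivalence that makes $\Gr^{(N)}$ coincide with the projective Quot scheme, and omitting it is the one place a reader could reasonably object.
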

\begin{proof}
This follows from \cite[Lemma 2.4]{Ri2}. 
\end{proof} 

\begin{comment}
\begin{lem}\label{repkeylem}
Let $\iota\colon \calP\hookto \calG$ be a closed immersion of flat affine group schemes of finite type over $X$. If $\iota(\calP)\subset \calG$ is a proper subgroup scheme, then there exists a vector bundle $\calE$ on $X$, a locally direct summand $\calL\subset \calE$ of rank $1$ and a faithful representation $\rho\colon  \calG\hookto \GL(\calE)$ such that $\iota(\calP)$ identifies with the stabilizer of $\calL$ in $\calG$.
\end{lem}
\begin{proof}
By \cite[Proposition 1.2]{PR}, the triple $(\calE,\calL,\rho)$ exists Zariski locally on $X$.
\end{proof}
\end{comment}

\begin{lem}\label{affgrass1} Let $\calG$ be an affine group scheme of finite type over $X$.\smallskip\\
i) If $\iota\colon  \calP\hookto \calG$ is a closed immersion of group schemes such that the fppf-quotient $\calG/\iota(\calP)$ is affine (resp. quasi-affine) over $X$, then the morphism $\iota_*\colon  \Gr(\calP,X)\to\Gr(\calG,X)$ is relatively representable by a closed (resp. quasi-compact) immersion.\smallskip\\
%ii) If $\iota\colon \calP\hookto \calG$ is a closed immersion of flat group schemes, then $\iota_*\colon  \Gr(\calP,X)\to\Gr(\calG,X)$ is relatively representable by a quasi-compact immersion.\smallskip\\
ii) If $\calG$ is flat over $X$, then $\Gr(\calG,X)$ is strict ind-representable of ind-finite type and separated over $X$.
\end{lem}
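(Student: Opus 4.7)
The plan is to prove (i) by identifying the fibres of $\iota_*$ with sections of $\calF/\iota(\calP)$ extending a prescribed section off the graph, and then to derive (ii) by combining (i) with Lemma \ref{repGln} after embedding $\calG$ locally into a general linear group.

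For (i), given a $T$-point $(x,\calF,\al)\in\Gr(\calG,X)(T)$, set $Z:=\calF/\iota(\calP)$, which is affine (resp.\ quasi-affine) over $X_T$ by fpqc descent from the hypothesis on $\calG/\iota(\calP)$. The trivialization $\al$ transports the identity section of the trivial bundle $\calG/\iota(\calP)=\calF^0/\iota(\calP)$ to a section $s_\al\colon U\to Z$ on $U:=X_T\setminus\Ga_x$, and a lift of $(x,\calF,\al)$ to $\Gr(\calP,X)$ is precisely a section $X_T\to Z$ extending $s_\al$. Since $X/k$ is smooth of relative dimension one, $\Ga_x$ is an effective Cartier divisor in $X_T$, so $U$ is schematically dense; combined with separatedness of $Z\to X_T$, this forces uniqueness of the extension, exhibiting $\iota_*$ as a monomorphism and its fibre over $(x,\calF,\al)$ as a subfunctor of $T$. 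In the affine case with $Z=\Spec_{X_T}(\calS)$, extendability amounts to the composite $\calS\to j_*\calO_U\to j_*\calO_U/\calO_{X_T}$ (with $j\colon U\hookto X_T$) being zero. The target $\varinjlim_n\calO_{n\Ga_x}$ is quasi-coherent, supported on $\Ga_x\simeq T$, and flat over $T$ (each successive quotient of its natural filtration is $\calO_T$), so this vanishing cuts out a closed subscheme of $T$. In the quasi-affine case, I would choose a quasi-compact open immersion $Z\hookto Y$ with $Y$ affine over $X_T$: extendability to $Y$ remains closed, while the condition that the extended section take values in $Z$ is automatic on $U$ and reduces to the open condition $s|_{\Ga_x}^{-1}(Z)=\Ga_x$; quasi-compactness of the resulting immersion is inherited from quasi-compactness of $Z\hookto Y$.

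For (ii), I would work Zariski locally on $X$. By \cite[Proposition 1.2]{PR}, refined by a Chevalley-type argument to ensure quasi-affineness of the quotient, there exists a closed embedding $\rho\colon\calG\hookto\GL(\calE)$ for some vector bundle $\calE$ on $X$ such that $\GL(\calE)/\calG$ is quasi-affine over $X$. Part (i) then yields a quasi-compact immersion $\rho_*\colon\Gr(\calG,X)\hookto\Gr(\GL(\calE),X)$, and Lemma \ref{repGln} presents the target locally as $\varinjlim_n V_n$ along closed immersions of projective $X$-schemes. The fibre products $W_n:=\Gr(\calG,X)\times_{\Gr(\GL(\calE),X)} V_n$ are quasi-compact locally closed subschemes of $V_n$, hence of finite type over $X$, and the transition maps $W_n\hookto W_{n+1}$ are closed immersions (being base changes of $V_n\hookto V_{n+1}$). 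This exhibits $\Gr(\calG,X)$ as a strict ind-scheme of ind-finite type and separated over $X$; a standard gluing argument over a Zariski cover of $X$ concludes the proof.

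The main obstacle I expect is arranging the \emph{quasi-affine} quotient in (ii): a direct Chevalley realization makes $\calG$ the stabilizer of a line in $\calE$, giving only a projective quotient, so one must refine the embedding (for instance via a character twist or a determinant construction) to realize $\calG$ as the stabilizer of a vector, yielding a locally closed immersion of the quotient into a vector bundle. The density-plus-separatedness argument behind the monomorphism claim in (i) is comparatively routine once $\Ga_x$ is recognised as a Cartier divisor.
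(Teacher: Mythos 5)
Your proof is correct and follows essentially the same strategy the paper uses, which it compresses into citations: part (i) by interpreting lifts along $\iota_*$ as sections of the quotient $\calF/\iota(\calP)$ extending across the effective Cartier divisor $\Ga_x$, with the affine case a closed vanishing condition and the quasi-affine case handled by an auxiliary affine compactification (this is precisely the content of Levin's Theorem 3.3.7, to which the paper defers); and part (ii) by choosing a faithful representation $\rho\colon\calG\hookto\GL(\calE)$ with quasi-affine fppf-quotient and combining (i) with Lemma~\ref{repGln} (the paper cites Heinloth, \S 2 Example (1), for the global existence of such a $(\calE,\rho)$, which is exactly the "obstacle" you flag and resolve by a character twist). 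Two small imprecisions that do not affect the argument: the successive filtration quotients of $j_*\calO_U/\calO_{X_T}$ are line bundles on $\Ga_x\simeq T$ rather than literally $\calO_T$ (still flat, so the closedness conclusion stands, using that the image of the finite-type $\calS$ lands in a bounded, hence finite locally free, piece), and you arrange $(\calE,\rho)$ only Zariski locally on $X$ whereas the paper's reference gives it globally — but both ind-representability and separatedness are Zariski-local on the base, so your gluing step is legitimate.
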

\begin{proof} 
Part i) is analogous to the proof of Levin \cite[Theorem 3.3.7]{Levin}. For part ii) choose any vector bundle $\calE$ on $X$ together with a faithful representation $\rho\colon\calG\hookto\GL(\calE)$ such that the fppf-quotient $\GL(\calE)/\rho(\calG)$ is representable by a quasi-affine scheme. The existence of the pair $(\calE,\rho)$ is proven in \cite[\S 2 Example (1)]{He}. Now ii) follows from i) applied to $\rho\colon\calG\hookto\GL(\calE)$ and Lemma \ref{repGln}. 

\begin{comment}
Let us prove prove ii). We follow the argument of Pappas and Zhu, cf. \cite[Corollary 10.7]{PZ}: Assume that $\iota(\calP)\subset \calG$ is a proper subgroup scheme. Let $(\calE,\calL,\rho)$ be a triple as in Lemma \ref{repkeylem}. Denote by $\chi\colon  \calP\to \GL(\calL)$ the character giving the action on $\calL$, and consider $\iota'\colon \calP\hookto \calG\times\GL(\calL)$, $p\mapsto (\iota(p),\chi^{-1}(p))$. Then the fppf-quotient
\[\calQ\defined(\calG\times\GL(\calL))/\iota'(\calP)\]
is representable by a quasi-affine scheme. Indeed, by a general result of Anantharaman \cite{SA} the sheaf $\calQ$ is representable by a scheme, and quasi-affineness can be tested Zariski locally on $X$. Consider $\rho'\colon \calG\times\GL(\calL)\to \GL(\calE\otimes_{\calO_X}\calL)$, $(g,l)\mapsto \rho(g)\otimes l$. Locally $\calE=\calO_X^r$ and $\calL=\calO_X\cdot v$ are free, and $\iota'(\calP)$ is the subgroup of $\calG\times\GL(\calL)$ (acting by $\rho'$ on $\bbA^r_X$) that fixes $v$. This gives a quasi-finite separated monomorphism $\calQ\to \bbA^r_X$, and hence $\calQ$ is quasi-affine by Zariski's main theorem. Now by i), the morphism $\iota'_*: \Gr(\calP,X)\to\Gr(\calG\times\GL(\calL),X)$ is relatively representable by a quasi-compact immersion. By Lemma \ref{finprojsys}, $\Gr(\calG\times\GL(\calL),X)=\Gr(\calG,X)\times \Gr(\GL(\calL),X)$ which implies ii).
\end{comment}
\end{proof}

If $T=\Spec(R)$ is affine and $x\in X(T)$, let $\hGa_x=\Spec(\hat{\calO}_{X_T,x})$ be the spectrum of the completion of $X_T$ along $\Ga_x$, i.e. if we choose a local parameter $t_x$  at $x$, which exists Zariski locally on $T$, then $\hat{\calO}_{X_T,x}\simeq R\pot{t_x}$. The graph $\Ga_x\hookto\hGa_x$ is a closed subscheme, and we let $\hGa_x^o=\hGa_x\bslash\Ga_x$ be its open complement.  

\begin{dfn}
i) The \emph{global loop group $\calL\calG$} is the functor on the category of $k$-algebras 
\[\calL\calG\colon  R\;\longmapsto\;\{(x,g)\;|\;x\in X(R),\; g\in\calG(\hGa_x^o)\}.\]
ii) The \emph{global positive loop group $\calL^+\calG$} is the functor on the category of $k$-algebras
\[\calL^+\calG\colon  R\;\longmapsto\;\{(x,g)\;|\;x\in X(R),\; g\in\calG(\hGa_x)\}.\]
\end{dfn}

There is the canonical projection $\calL\calG\to X$ (resp. $\calL^+\calG\to X$), and the construction is functorial in $(\calG, X)$ in the obvious sense.

\begin{rmk}\label{connectrmk}
The connection to the loop groups from \S 1.1 is as follows. Let $x\in X(K)$ for any field extension $K$ of $k$, and choose a local parameter $t_x$ in $x$. We let $F_x=K\rpot{t_x}$, with ring of integers $\calO_{F_x}=K\pot{t_x}$. Then as functors on the category of $K$-algebras
\[(\calL\calG)_x\;=\;L\calG_{F_x}\;\;\;\;\text{and}\;\;\;\;(\calL^+\calG)_x\;=\;L^+\calG_{\calO_x}.\]
\end{rmk}

Let $\calG$ be an affine group scheme over $X$. For $i\geq 0$ let $\calG_i$ be the functor on the category of $k$-algebras
\[R\;\longmapsto\;\{(x,g)\;|\;x\in X(R),\; g\in\calG(\Ga_{x,i})\},\]
where $\Ga_{x,i}$ denotes the $i$-th infinitesimal neighbourhood of $\Ga_x\hookto X_R$. If $x\in X(R)$, then $\calG_i\otimes_X R$ is the Weil restriction of $\calG_R\times_{X_R}\Ga_{x,i}$ along the finite flat morphism $\Ga_{x,i}\to \Spec(R)$. Hence, $\calG_i\to X$ is representable by an affine group scheme by \cite[\S 7.6 Proof of Theorem 4]{BLR}. As $i$ varies, the $\calG_i$ form an inverse system with affine transition maps. In particular, $\varprojlim_i\calG_i$ is representable by an affine group scheme over $X$, and the canonical morphism of group functors
\begin{equation}\label{invlim}
\calL^+\calG\;\overset{\simeq}{\longto}\; \varprojlim_i\calG_i
\end{equation}
is an isomorphism. 

\begin{lem}\label{groupscheme} Let $\calG$ be a flat affine group scheme of finite type over $X$ .\smallskip\\
i) The functor $\calL\calG$ is representable by an ind-affine ind-group scheme over $X$.\smallskip\\
ii) The functor $\calL^+\calG$ is representable by a closed affine subgroup scheme of $\calL\calG$. If $\calG$ is smooth (resp. is smooth and has geometrically connected fibers) over $X$, then $\calL^+\calG$ is reduced and flat (resp. reduced, flat and has geometrically connected fibers) over $X$.
\end{lem}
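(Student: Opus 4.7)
My plan is to prove (ii) first, since (i) will build on it. For representability, I exploit the isomorphism $\calL^+\calG \cong \varprojlim_i \calG_i$ from \eqref{invlim}: the text observes that each $\calG_i$ is representable by an affine group scheme of finite type over $X$ via Weil restriction along the finite flat morphism $\Ga_{x,i} \to X$ (citing BLR \S 7.6), and the transition maps $\calG_{i+1} \to \calG_i$ are affine. The inverse limit of affine schemes along affine transition maps is again affine, giving $\calL^+\calG$ as an affine group scheme over $X$.

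For the geometric properties in (ii), I invoke smoothness of Weil restriction along finite locally free morphisms (BLR \S 7.6 Prop.~5): when $\calG$ is smooth, each $\calG_i$ is smooth over $X$, and the transition $\calG_{i+1} \twoheadrightarrow \calG_i$ is smooth and surjective with kernel a vector bundle modeled on $\Lie(\calG)\otimes (t_x^i)/(t_x^{i+1})$. Thus $\calO_{\calL^+\calG} = \varinjlim_i \calO_{\calG_i}$ is a filtered colimit of smooth $\calO_X$-algebras with faithfully flat transition maps; filtered colimits preserve both flatness and reducedness, so $\calL^+\calG$ is flat and reduced over $X$. Geometric connectedness of fibers descends analogously: each $\calG_i$ inherits geometrically connected fibers from $\calG$ (Weil restriction along a nilpotent thickening of a geometric point preserves connectedness), and this property passes to the inverse limit under surjective transition maps.

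For (i), the strategy is to embed $\calG$ into a general linear group. By \cite[\S 2 Example (1)]{He}, Zariski locally on $X$ there is a vector bundle $\calE$ and a closed immersion $\rho\colon \calG \hookto \GL(\calE)$ with quasi-affine fppf-quotient. Since $\rho$ is closed, the induced $\rho_*\colon \calL\calG \to \calL\GL(\calE)$ is a closed subfunctor inclusion, cut out by the pullback of the vanishing ideal of $\calG$ in $\calO_{\GL(\calE)}$. It therefore suffices to represent $\calL\GL(\calE)$, which I realize via the closed embedding $\GL(\calE) \hookto \End(\calE) \oplus \End(\calE)$, $g \mapsto (g, g^{-1})$, together with the exhaustion $\calL\End(\calE) = \bigcup_{n \geq 0} t_x^{-n}\cdot\calL^+\End(\calE)$ by affine $X$-schemes, each a twist of the affine object produced in (ii) applied to the additive group underlying the vector bundle $\End(\calE)$. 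The closed subgroup $\calL^+\calG \hookto \calL\calG$ is then realized as the intersection of $\calL\calG$ with the $n=0$ piece of this filtration, completing the closed-immersion claim in (ii).

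The main technical obstacle is the globalization of the pole-order filtration: the local parameter $t_x$ only exists Zariski-locally on the base, so one must verify that the subfunctors $t_x^{-n}\cdot\calL^+\End(\calE)$ are independent of this choice. This is the case because any two local parameters at $\Ga_x$ differ by a unit in the complete local ring $\hat{\calO}_{X_T,x}$, and unit multiplication preserves the order of pole, so the filtration is intrinsic and the local descriptions glue into a globally defined ind-affine ind-scheme structure on $\calL\calG$.
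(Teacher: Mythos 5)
Your argument is essentially correct, and for part (ii) you follow the same route as the paper: write $\calL^+\calG \simeq \varprojlim_i \calG_i$ from \eqref{invlim}, cite \cite[\S 7.6]{BLR} for smoothness of the Weil restrictions $\calG_i$, and pass the structural properties (flatness, reducedness, geometric connectedness of fibers) through the filtered colimit of coordinate rings. The paper's own treatment of (ii) is exactly this, though more tersely phrased; your observation that injectivity of the ring-level transition maps (coming from faithful flatness of $\calG_{i+1}\to\calG_i$) is what makes the connectedness argument clean is a useful explicitation of what the paper calls ``follows easily by induction.''

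The genuine difference is in part (i): the paper dispatches representability of $\calL\calG$ (and the fact that $\calL^+\calG\hookrightarrow\calL\calG$ is a closed immersion) in one sentence by citing Heinloth \cite[Proposition~2]{He}, whereas you unwind that citation into the standard hands-on construction --- embed $\calG$ into $\GL(\calE)$ via \cite[\S 2 Example (1)]{He}, embed $\GL(\calE)$ into $\End(\calE)\oplus\End(\calE)$ by $g\mapsto(g,g^{-1})$, exhaust $\calL\End(\calE)$ by pole order, and cut out $\calL\calG$ as a closed subfunctor. This is a faithful reconstruction of what lies behind Heinloth's proposition, so your route is not logically different, only more self-contained. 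Two small points worth being crisp about if you were to include this in full: (a) the identification $\calL^+\calG = \calL\calG \cap \calL^+\GL(\calE)$ uses that $R\pot{t_x}\to R\rpot{t_x}$ is injective for every $R$, so a section over $\hGa_x$ landing in $\calG$ on the punctured disc already lands in $\calG$, and (b) the closedness of $\calL\calG$ in each pole-order stratum of $\calL\GL(\calE)$ uses that $\calG$ is finitely presented, so the defining ideal is generated by finitely many elements whose ``coefficient functions'' give the required equations. Both facts hold here, so your proof stands; it just trades a single citation for a paragraph of classical argument.
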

\begin{proof}
The representability assertions are due to Heinloth \cite[Proposition 2]{He}. First assume that $\calG$ is smooth. Then, for all $i\geq 0$, the group schemes $\calG_i\to X$ are smooth by \cite[\S 7.6 Proposition 5]{BLR}. This implies that $\calL^+\calG$ is reduced and flat over $X$ by \eqref{invlim}. If in addition $\calG$ has geometrically connected fibers, then it follows easily by induction on $i$ that $\calG_i\to X$ has geometrically connected fibers. Again by \eqref{invlim}, this implies that $\calL^+\calG\to X$ has geometrically connected fibers.  
\end{proof}

Let $\calG$ be a smooth affine group scheme over $X$. Let $\calL\calG/\calL^+\calG$ be the fpqc-quotient. We construct a morphism of fpqc-sheaves 
\[\ev\colon \Gr(\calG,X)\;\longto\;\calL\calG/\calL^+\calG\]
as follows. Let $(x,\calF,\be)\in\Gr(\calG,X)(T)$ with $T$ affine. First assume that $\calF|_{\hGa_x}$ is trivial, and choose some trivialization $\be\colon \calF|_{\hGa_x}\simeq\calF^0|_{\hGa_x}$. Then the class of $(\al\circ\be^{-1})(1)$ in $\calG(\hGa_x^o)/\calG(\hGa_x)$ is independent of $\be$, and defines $\ev$ in this case. To construct $\ev$ in general, it is enough to observe that if $\calF$ is torsor on the Laurent series ring $R\pot{t}$ under a smooth group scheme, then $\calF\otimes_{R\pot{t}}R'\pot{t}$ is trivial for some $R\to R'$ fpqc. Indeed, $\calF|_{t=0}$ is trivial over a faithfully flat extension $R\to R'$ giving rise to a section $s\in\calF(R')$. By the smoothness, $s$ extends to a formal section over $\Spf(R'\pot{t})$ which is algebraic by Grothendieck's algebraization theorem. Hence, $\calF|_{R'\pot{t}}$ is trivial. 

\begin{lem}\label{evlem} 
Let $\calG$ be a smooth affine group scheme over $X$. The morphism $\ev\colon  \Gr(\calG,X)\to\calL\calG/\calL^+\calG$ is an isomorphism of fpqc-sheaves which is functorial in $\calG$ and $X$.
\end{lem}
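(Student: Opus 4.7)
The map $\ev$ is constructed in the paragraph preceding the statement, so the task is to show it is an isomorphism of fpqc sheaves and natural in $(\calG,X)$. The plan is to exhibit an inverse via a Beauville--Laszlo style gluing argument, working after an fpqc base change so that the relevant torsors trivialize and the data becomes purely that of elements of $\calG$ on formal and punctured formal disks.

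The central input is the following gluing equivalence, which I would state and verify first. Given $x\colon T\to X$ with $T=\Spec(R)$ affine and a local parameter $t_x$ existing Zariski-locally on $T$, restriction induces an equivalence between the groupoid of pairs $(\calF,\alpha)$, where $\calF$ is a $\calG_T$-torsor on $X_T$ and $\alpha$ is a trivialization on $X_T\bslash\Ga_x$, and the groupoid of pairs $(\hat{\calF},\hat{\alpha})$, where $\hat{\calF}$ is a $\calG_T$-torsor on $\hGa_x$ and $\hat{\alpha}$ is a trivialization on $\hGa_x^o$. This is Beauville--Laszlo gluing applied to the faithfully flat cover $\hGa_x\sqcup (X_T\bslash\Ga_x)\to X_T$; flatness of the formal completion along the divisor $\Ga_x$ together with the smoothness of $\calG$ are what make the descent argument run in this non-abelian setting.

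Given this equivalence, surjectivity of $\ev$ is immediate: any class in $(\calL\calG/\calL^+\calG)(T)$ is fpqc-locally represented by $g\in\calG(\hGa_x^o)$, and one glues the trivial torsor on $\hGa_x$ with the trivial torsor on $X_T\bslash\Ga_x$ via $g$; replacing $g$ by $hg$ with $h\in\calG(\hGa_x)$ yields an isomorphic triple, so descent along the fpqc cover of $T$ produces a well-defined preimage in $\Gr(\calG,X)(T)$. Injectivity is symmetric: if $(x,\calF,\alpha)$ and $(x,\calF',\alpha')$ have the same image, then, after an fpqc cover trivializing both $\calF|_{\hGa_x}$ and $\calF'|_{\hGa_x}$ by sections $\beta,\beta'$, the two elements $\alpha\circ\beta^{-1}$ and $\alpha'\circ\beta'^{-1}$ of $\calG(\hGa_x^o)$ differ by an element of $\calG(\hGa_x)$, which I absorb into $\beta'$. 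The equivalence above then upgrades the resulting equality of pairs on the formal side to an isomorphism $\calF\simeq \calF'$ compatible with $\alpha$ and $\alpha'$ on $X_T$, giving the required isomorphism of triples.

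Naturality in $\calG$ and $X$ is automatic: pushforward of torsors and base change of the curve and the disk both commute with the formal completion along $\Ga_x$, so both $\ev$ and its inverse transform functorially under the maps $\tau_*$ of \eqref{fungroup} and $f^*$ of \eqref{funbase}. The main technical obstacle is verifying the Beauville--Laszlo equivalence itself for a smooth but possibly non-commutative $\calG$, but it reduces to the standard descent statement once one uses the smoothness of $\calG$ (which, as noted in the paragraph before the lemma, makes $\calG$-torsors on $\Spec(R\pot{t_x})$ fpqc-locally trivial) and the flatness of $\hGa_x\to X_T$; everything after this step is essentially formal.
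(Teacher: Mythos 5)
Your overall strategy --- building an explicit inverse to $\ev$ via gluing of torsor data between the formal disk $\hGa_x$ and the open complement $X_T\setminus\Ga_x$ --- is exactly the content of Heinloth's Proposition~4, which is what the paper cites, so the route is essentially the same as the paper's. What the paper does first, though, and what you omit, turns out to matter: the paper reduces to noetherian test rings $R$ (this is legitimate because $\Gr(\calG,X)$ is of ind-finite type, hence limit-preserving), and \emph{then} invokes the gluing statement.

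The gap in your write-up is the sentence claiming that the gluing equivalence ``is Beauville--Laszlo gluing applied to the faithfully flat cover $\hGa_x\sqcup(X_T\setminus\Ga_x)\to X_T$'' with ``flatness of the formal completion along $\Ga_x$'' as a key input, a claim you repeat at the end. For a non-noetherian $R$ the completion $\hat{\calO}_{X_T,\Ga_x}\simeq R\pot{t_x}$ need \emph{not} be flat over $X_T$; this is precisely the defect that motivated Beauville and Laszlo to prove their descent lemma \emph{without} assuming flatness of the completion. So you have conflated the actual statement of Beauville--Laszlo (gluing with no flatness hypothesis) with the naive flat-descent heuristic that only becomes available after the noetherian reduction. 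There is also a secondary imprecision: even when $\hGa_x\to X_T$ \emph{is} flat, descent along the disjoint-union cover $\hGa_x\sqcup(X_T\setminus\Ga_x)\to X_T$ in the usual fpqc sense requires a cocycle datum over $\hGa_x\times_{X_T}\hGa_x$, which is a genuinely larger scheme than $\hGa_x$; the Beauville--Laszlo picture involves only the comparison over $\hGa_x^o$ and is not literally the statement of fpqc descent along that cover. To repair the argument, either perform the noetherian reduction first, as the paper does, or drop the flatness remark and invoke the genuine Beauville--Laszlo theorem together with its extension from quasi-coherent modules to affine $X_T$-schemes (and hence to torsors under smooth affine $\calG$, where, as you correctly note, smoothness is what lets you trivialize $\calF$ on $\hGa_x$ fpqc-locally). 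With that replacement, the surjectivity, injectivity and naturality steps you describe are all fine.
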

\begin{proof}
Because $\Gr(\calG,X)$ is of ind-finite type, we may test on noetherian $k$-algebras that $\ev$ is an isomorphism. This follows from Proposition 4 in \cite{He}, cf. Remark \ref{Heinrmk}. The functorialities are easy to check.
\end{proof}

By Lemma \ref{evlem}, there is an action of the loop group 
\begin{equation}\label{Globact}
\calL\calG\times_X\Gr(\calG,X)\longto \Gr(\calG,X).
\end{equation}
If we restrict the action to $\calL^+\calG$, then it factors on each orbit through a smooth affine quotient of $\calL^+\calG$ in the following sense.

\begin{lem} \label{actlem} Let $\calG$ be a smooth affine group scheme over $X$. Let $\calL^+\calG\simeq \varprojlim_i\calG_i$ be as in \eqref{invlim}. Let $T$ be a quasi-compact $X$-scheme, and let $\mu\colon  T\to \Gr(\calG,X)$ be a morphism over $X$. Then the $T$-morphism $\calL^+\calG_T\to \Gr(\calG,X)_T$, $g\mapsto g\mu$ factors through some $\calG_{i,T}$. 
\end{lem}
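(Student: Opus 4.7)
The strategy is to reduce to the case $\calG=\GL(\calE)$ for a vector bundle $\calE$ and then perform a direct lattice computation.

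Since $T$ is quasi-compact and $\Gr(\calG,X)$ is a strict ind-scheme of ind-finite type over $X$ by Lemma~\ref{affgrass1}~ii), the morphism $\mu$ factors through a closed $X$-subscheme $Y\subseteq\Gr(\calG,X)$ of finite type. It therefore suffices to show that the action map $\calL^+\calG\times_X Y\to\Gr(\calG,X)$ factors through $\calG_i\times_X Y$ for some $i$; equivalently, that the kernel $\calK_i^{\calG}:=\ker(\calL^+\calG\to\calG_i)$ acts trivially on $Y$ for $i\gg 0$. Now choose a faithful representation $\rho\colon\calG\hookrightarrow\GL(\calE)$ with quasi-affine fppf-quotient (which exists by the proof of Lemma~\ref{affgrass1}~ii)). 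By Lemma~\ref{affgrass1}~i) the induced morphism $\rho_*\colon\Gr(\calG,X)\hookrightarrow\Gr(\GL(\calE),X)$ is a quasi-compact immersion, and it is equivariant for the closed embedding $\calL^+\calG\hookrightarrow\calL^+\GL(\calE)$. Since the induced maps $\calG_i\hookrightarrow\GL(\calE)_i$ on jet groups are also closed immersions, one has $\calK_i^{\calG}=\calL^+\calG\cap\calK_i^{\GL(\calE)}$, so it is enough to prove the assertion for $\calG=\GL(\calE)$.

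Assume now $\calG=\GL(\calE)$. After a Zariski covering argument we may suppose that $X$ is affine, $\calE$ is trivial, a parameter $t_x$ exists along $\Ga_x$, and $T=\Spec R$ is affine. Then $T$-points of $\Gr(\GL(\calE),X)$ correspond to finitely generated projective $R\pot{t_x}$-submodules $\calL\subset R\rpot{t_x}^n$ of full rank, and by the ind-presentation of $\Gr(\GL_n,X)$ used in Lemma~\ref{repGln} the finite-type subscheme (the image of) $Y$ is contained in the bounded locus where
\[
t_x^N R\pot{t_x}^n\;\subseteq\;\calL\;\subseteq\;t_x^{-N}R\pot{t_x}^n
\]
for some uniform $N\geq 0$. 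For any $i\geq 2N$ and any $g\in\calK_i^{\GL(\calE)}(R)$ we have $g-\id\in t_x^{i}M_n(R\pot{t_x})$, hence
\[
(g-\id)\calL\;\subseteq\; t_x^{i-N}R\pot{t_x}^n\;\subseteq\; t_x^{N}R\pot{t_x}^n\;\subseteq\;\calL,
\]
so $g\cdot\calL=\calL$. Therefore $\calK_i^{\GL(\calE)}$ fixes $Y$ pointwise and the action factors through $\calG_{i,T}$.

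The only non-bookkeeping input is the uniform boundedness of lattices parametrized by a finite-type subscheme of $\Gr(\GL_n,X)$, which is the main substantive ingredient; the rest is formal equivariance and the standard fact that kernels of jet truncations act trivially on bounded pieces of the $\GL_n$-affine Grassmannian.
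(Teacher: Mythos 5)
Your proof is correct and takes essentially the same route as the paper's: the paper also first factors $\mu$ through a finite-type closed subscheme using quasi-compactness of $T$, and then reduces to the $\GL_n$ case (citing \cite[Cor.~2.7]{Ri2} for the lattice argument you carry out explicitly). The only difference is one of exposition, not substance: you unwind the reduction to $\GL(\calE)$ and the boundedness-of-lattices computation in full, whereas the paper defers both to the reference; there is a harmless off-by-one between $t_x^i$ and $t_x^{i+1}$ in the description of $\ker(\calL^+\GL(\calE)\to\GL(\calE)_i)$ which does not affect the argument since $i$ is taken large.
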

\begin{proof}
Since $T$ is quasi-compact, the morphism $\mu$ factors through some closed subscheme of $\Gr(\calG,X)$. Now the lemma follows by reduction to the case of $\GL_n$ as in \cite[Cor. 2.7]{Ri2}.
\end{proof}

\begin{cor}\label{group1} Let $\calG$ be a smooth affine group scheme over $X$.\smallskip\\
i) If $x\in X(K)$ for any extension $K$ of $k$, let $F_x=K\rpot{t_x}$ and $\calO_{F_x}=K\pot{t_x}$ where $t_x$ is a local parameter at $x$. The fiber of \eqref{Globact} over $x$ is isomorphic to the action morphism
\[L\calG_{F_x}\times_{K}L\calG_{F_x}/L^+\calG_{\calO_x}\;\longto\; L\calG_{F_x}/L^+\calG_{\calO_x},\]
functorial in $\calG$.\smallskip\\
ii) If $f\colon Y\to X$ is \'etale, then $f^*\colon \Gr(\calG,X)\times_XY\to\Gr(\calG_Y,Y)$ is an isomorphism.
\end{cor}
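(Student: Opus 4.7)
The strategy is to reduce both statements to Lemma \ref{evlem}, which identifies $\Gr(\calG,X)$ with the fpqc-quotient $\calL\calG/\calL^+\calG$, and then to exploit the fact that the loop group constructions, being defined via the formal completion along the graph, behave well under base change. Remark \ref{connectrmk} already records the computation of fibers at $k$-points, and the same computation will work in both parts.

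For part i), Lemma \ref{evlem} rewrites the action morphism \eqref{Globact} as the translation action
\[\calL\calG\times_X (\calL\calG/\calL^+\calG)\;\longto\;\calL\calG/\calL^+\calG.\]
Since the formation of fpqc-quotients commutes with arbitrary base change, pulling back along $x\colon\Spec(K)\to X$ yields the action of $(\calL\calG)_x$ on $(\calL\calG)_x/(\calL^+\calG)_x$. Remark \ref{connectrmk} identifies $(\calL\calG)_x=L\calG_{F_x}$ and $(\calL^+\calG)_x=L^+\calG_{\calO_x}$, and functoriality in $\calG$ is already built into both Lemma \ref{evlem} and Remark \ref{connectrmk}. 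This gives the stated description.

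For part ii), applying Lemma \ref{evlem} to both $(\calG,X)$ and $(\calG_Y,Y)$ reduces the problem to producing a natural isomorphism compatible with $f^\ast$ from \eqref{funbase},
\[(\calL\calG/\calL^+\calG)\times_X Y\;\overset{\simeq}{\longto}\;\calL(\calG_Y)/\calL^+(\calG_Y).\]
Since fpqc-quotients commute with base change, it is enough to produce compatible isomorphisms $\calL\calG\times_X Y\simeq \calL(\calG_Y)$ and $\calL^+\calG\times_X Y\simeq\calL^+(\calG_Y)$. For $T$ affine and $y\in Y(T)$ with image $x=f\circ y\in X(T)$, the base change $f_T\colon Y_T\to X_T$ is étale and restricts to an isomorphism $\Ga_y\overset{\simeq}{\longto}\Ga_x$. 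Since $f_T$ is formally étale, this isomorphism lifts uniquely to an isomorphism of formal completions $\hGa_y\simeq \hGa_x$, and by taking complements of the graphs also $\hGa_y^o\simeq \hGa_x^o$. Pulling back $\calG$-valued sections along these identifications, and using that $\calG_Y$ is defined as the pullback of $\calG$, gives the required bijections on $T$-points.

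The main technical point is the identification $\hGa_y\simeq \hGa_x$ of formal completions, which is the precise place where étaleness of $f$ gets used. This is a standard consequence of the invariance of the formal neighbourhood under formally étale base change, but it is what converts the geometric hypothesis on $f$ into a statement about the parameter data of the BD-Grassmannian. Everything afterwards -- functoriality of the isomorphism in $\calG$ and $X$, compatibility with $\calL^+$, and descent to the quotient -- is essentially formal and mirrors arguments already used in \S 1.2.
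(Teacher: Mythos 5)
Your proof of part i) is exactly the paper's approach: the paper simply records ``Part i) is a consequence of Lemma \ref{evlem},'' and your unpacking via Remark \ref{connectrmk} and the compatibility of fpqc-quotients with base change spells out what that means. For part ii) the paper cites \cite[Lemma 3.2]{Z} rather than giving an argument, so your route is a genuine (and correct) self-contained proof. The heart of it — that $f_T\colon Y_T\to X_T$, being étale and an isomorphism on the graphs, induces an isomorphism $\hGa_y\simeq\hGa_x$ of formal completions, whence $\calL(\calG_Y)\simeq\calL\calG\times_X Y$ and $\calL^+(\calG_Y)\simeq\calL^+\calG\times_X Y$ — is precisely the mechanism by which étaleness enters, and passing to loop groups via $\ev$ is the right way to reduce the torsor-theoretic statement to a statement about formal neighbourhoods. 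Two small points worth making explicit if you were to write this up: first, one should observe that $\Ga_y$ is open and closed in $f_T^{-1}(\Ga_x)$, so that the $n$-th infinitesimal neighbourhood $\Ga_{y,n}$ really is $\Ga_{x,n}\times_{X_T}Y_T$ (Zariski-locally near the graph) and the base-changed map $\Ga_{y,n}\to\Ga_{x,n}$ is an étale morphism inducing an isomorphism on underlying reduced schemes, hence an isomorphism; second, one should check that the identification $\hGa_y^o\simeq\hGa_x^o$ intertwines the $X$-structure maps so that $\calG_Y(\hGa_y^o)\simeq\calG(\hGa_x^o)$ is induced, but this is immediate from $\calG_Y=\calG\times_X Y$. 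Neither of these is a gap — just the level of detail one would expect to fill in.
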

\begin{proof}
Part i) is a consequence of Lemma \ref{evlem}. Part ii) is proven in \cite[Lemma 3.2]{Z}.
\end{proof}

\begin{rmk} If $\calG$ is \emph{parahoric} (cf. Definition \ref{parahoricdfn} below), then the fibers of $\Gr(\calG,X)\to X$ are affine flag varieties, cf. \S 1.1 and Remark \ref{connectrmk}.
\end{rmk}

\begin{lem}\label{reductivelem} Let $\calG$ be a smooth affine group scheme over $X$.\smallskip\\
i) If $\calG=G\times_kX$ is constant, then $\Gr(G,X)\to X$ is Zariski locally on $X$ constant.\smallskip\\
ii) If $\calG$ is connected reductive, then $\Gr(\calG,X)\to X$ is ind-proper.
\end{lem}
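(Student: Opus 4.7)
For part (i), the plan is to work Zariski-locally on $X$ where a uniform local parameter exists. After shrinking $X$, we may assume there is an \'etale morphism $t\colon X\to\bbA^1_k$; for any $k$-algebra $R$ and any $R$-point $x\colon\Spec(R)\to X$, the function $s := t - t(x)$ is a uniformizer along $\Ga_x$, giving a canonical isomorphism $\hGa_x \simeq \Spec(R\pot{s})$ functorial in $R$ (since the completion along the graph is invariant under \'etale base change). This uniform coordinate trivializes the global loop groups on $X$: $\calL G|_X \simeq LG\times_k X$ and $\calL^+G|_X \simeq L^+G\times_k X$, where $L$ and $L^+$ are formed with respect to the formal variable $s$. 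Via Lemma \ref{evlem}, the desired isomorphism $\Gr(G,X) \simeq \Gr_G\times_k X$ follows after the Zariski shrinking.

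For part (ii), the plan is to reduce to (i) by \'etale descent on $\calG$. By the structure theory of reductive group schemes (SGA\,3, Exp.~XXII-XXIV), there exists an \'etale cover $f\colon Y\to X$ and a split connected reductive group $G_0$ over $k$ such that $\calG_Y \simeq G_0\times_k Y$. Corollary \ref{group1}(ii) then yields
\[
\Gr(\calG,X)\times_X Y \;\simeq\; \Gr(\calG_Y,Y) \;\simeq\; \Gr(G_0,Y),
\]
which by part (i) is Zariski-locally on $Y$ isomorphic to $\Gr_{G_0}\times_k U$ for suitable open $U\subset Y$. The ind-projectivity of $\Gr_{G_0}$ over $k$ for split connected reductive $G_0$ is classical (Beauville-Laszlo, Faltings, Beilinson-Drinfeld), so the pullback $\Gr(\calG,X)\times_X Y\to Y$ is ind-proper. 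By Lemma \ref{affgrass1}(ii), $\Gr(\calG,X)\to X$ is strict ind of ind-finite type, so its canonical ascending filtration by closed subschemes of finite type over $X$ pulls back compatibly to $Y$ by Corollary \ref{group1}(ii); since properness of each level is fpqc-local on the base, \'etale-local ind-properness on $Y$ descends to ind-properness on $X$.

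The main obstacle I anticipate is purely organizational rather than geometric: one must keep track of two successive layers of localization -- the \'etale cover $Y\to X$ used to trivialize $\calG$, followed by the Zariski shrinking on $Y$ used in part (i) -- and ensure that the compatibility of ind-structures under \'etale base change remains transparent. All geometric content reduces to the classical ind-projectivity of the affine Grassmannian $\Gr_{G_0}$ for a split connected reductive group; the remainder of the argument is formal descent.
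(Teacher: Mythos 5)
Your argument is correct and reaches the same conclusion, but part (ii) follows a genuinely different route from the paper. For part (i) you are essentially re-deriving what the paper uses as input: the paper simply cites \cite[Remark 2.19]{Ri2} for constancy of $\Gr(G,\bbA^1_k)\to\bbA^1_k$ and then invokes Corollary~\ref{group1}(ii) along a Zariski-local \'etale map $X\to\bbA^1_k$; your direct construction of a uniform local parameter $s=t-t(x)$ via the \'etale coordinate is the explicit version of the same step (you should add a word explaining why $\Ga_x$ is open and closed in $t^{-1}(\Ga_{t(x)})$, which is what makes the completion isomorphism valid). For part (ii) the paper does \emph{not} pass through part (i): it uses the existence of Chevalley models to get a closed embedding $\calG\hookto\GL_{n,X}$ over $X$ with affine fppf-quotient, then applies Lemma~\ref{affgrass1}(i) together with the ind-properness of $\Gr(\GL_n,X)\to X$ (Lemma~\ref{repGln}) to conclude directly over the curve, reducing the general case by \'etale descent at the end. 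You instead first reduce over an \'etale cover and a further Zariski shrinking to $\Gr_{G_0}\times_k U$ via part (i), and then import the classical ind-projectivity of $\Gr_{G_0}$ over a field as a black box. Both work; the paper's route stays self-contained within the machinery it has already built (Lemmas~\ref{repGln} and~\ref{affgrass1}), whereas your route is formally shorter at the cost of an external reference. Your final descent step is sound, though there is no ``canonical'' ascending filtration: what you mean is that any strict ind-presentation $\Gr(\calG,X)=\varinjlim Z_i$ by finite-type closed subschemes works, since each $Z_i\times_XY$ is a finite-type closed subscheme of the ind-proper $\Gr(\calG_Y,Y)$, hence proper over $Y$, and properness descends along the surjective \'etale (hence fppf) cover $Y\to X$.
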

\begin{proof}
The Grassmannian $\Gr(G,X)\to X$ is constant for $X=\bbA^1_k$, cf. \cite[Remark 2.19]{Ri2}. But Zariski locally on $X$, there exists a finite \'etale morphism $X\to \bbA^1_k$ which implies i) by Corollary \ref{group1} ii). For part ii), if $\calG$ is split reductive, then the existence of Chevalley models shows that there is a faithful representation $\calG\hookto \GL_{n,X}$ such that the fppf-quotient $\GL_{n,X}/\calG$ is affine. Use Lemma \ref{affgrass1} i), and the ind-properness of $\Gr(\GL_n,X)\to X$ to conclude that $\Gr(\calG,X)\to X$ is ind-proper in this case. The lemma follows from the fact that every connected reductive group is split \'etale locally, and from Corollary \ref{group1} ii).
\end{proof}

The following general lemma is needed in \S 1.4.

\begin{lem}\label{generalnonsense} Let $\iota\colon  \tilde{X}\to X$ be a finite flat surjection of smooth connected curves over $k$, and let $\tilde{G}$ be an affine group scheme of finite type over $\tilde{X}$. Then $\calG=\Res_{\tilde{X}/X}(\tilde{G})$ is an affine group scheme of finite type, and the canonical morphism of functors over $X$
\[\Res_{\tilde{X}/X}(\Gr(\tilde{\calG},\tilde{X})) \;\longto\; \Gr(\calG,X)\]
is an isomorphism.
\end{lem}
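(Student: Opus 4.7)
Plan. Part (a), the representability of $\calG$ as an affine group scheme of finite type over $X$, is a standard consequence of the assumption that $\iota$ is finite flat and $\tilde{G}$ is affine of finite type; see \cite[\S 7.6, Theorem 4]{BLR}.

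For part (b), I would use the evaluation isomorphism of Lemma \ref{evlem} to translate the claim into the language of loop groups. The first step is to establish natural identifications
$$\calL\calG\;\simeq\;\Res_{\tilde{X}/X}(\calL\tilde{G}),\qquad \calL^+\calG\;\simeq\;\Res_{\tilde{X}/X}(\calL^+\tilde{G})$$
of fpqc sheaves of groups over $X$. These follow from the Weil restriction identity $\calG(A) = \tilde{G}(A \otimes_{\calO_X} \calO_{\tilde{X}})$ valid for any $\calO_X$-algebra $A$, applied to $A = \hat{\calO}_{X_R, \Gamma_x}$ (respectively its localization at a uniformizer) for a $k$-algebra $R$ with structure map $x\colon \Spec R \to X$, together with the compatibility of formal completion along the graph with finite flat base change: writing $\tilde{R} = R \otimes_{\calO_X} \calO_{\tilde{X}}$ and $p_2\colon \Spec \tilde{R} \to \tilde{X}$ for the second projection, one has a natural isomorphism $\hat{\calO}_{X_R, \Gamma_x} \otimes_{\calO_X} \calO_{\tilde{X}} \simeq \hat{\calO}_{\tilde{X}_{\tilde{R}}, \Gamma_{p_2}}$ of completed local rings.

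Given these identifications and Lemma \ref{evlem}, the claim reduces to showing that the canonical map
$$\Res_{\tilde{X}/X}(\calL\tilde{G})/\Res_{\tilde{X}/X}(\calL^+\tilde{G})\;\longto\;\Res_{\tilde{X}/X}(\calL\tilde{G}/\calL^+\tilde{G})$$
is an isomorphism of fpqc sheaves on $X$; this identifies $\Gr(\calG,X)$ with $\Res_{\tilde{X}/X}(\Gr(\tilde{G},\tilde{X}))$, and the canonical morphism in the statement is identified with the inverse. This last step is the main obstacle, since Weil restriction is a right adjoint and in general does not commute with quotients. What saves the argument is that $\iota$ is finite flat, so $\Res_{\tilde{X}/X}$ preserves fpqc covers: any fpqc-local section of $\calL\tilde{G} \to \calL\tilde{G}/\calL^+\tilde{G}$ over an $\tilde{X}$-scheme $\tilde{T} = T \times_X \tilde{X}$ transports, via the universal property of Weil restriction, to an fpqc-local section of $\calL\calG \to \calL\calG/\calL^+\calG$ over $T$, and matching of the $\calL^+$-ambiguities follows from the identifications of the second paragraph.
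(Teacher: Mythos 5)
\textbf{Part (a)} is fine and matches the paper word for word: both cite \cite[\S 7.6, Theorem 4]{BLR}.

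\textbf{Part (b)} has a genuine gap: the ``compatibility of formal completion with finite flat base change'' you assert is false when $\iota$ ramifies, and this kills the intermediate identifications $\calL\calG\simeq\Res_{\tilde X/X}(\calL\tilde G)$ and $\calL^+\calG\simeq\Res_{\tilde X/X}(\calL^+\tilde G)$. To see this concretely, take $X=\tilde X=\bbA^1_k$ with $\iota$ the squaring map $u\mapsto u^2=t$ (characteristic $\neq 2$), $R=k$, and $x=0$. Then $\hat\calO_{X_R,\Gamma_x}\otimes_{\calO_X}\calO_{\tilde X}=k\pot{t}\otimes_{k[t]}k[u]\simeq k\pot{u}$, which is a domain; but $\tilde R=k[u]/(u^2)$, and $\hat\calO_{\tilde X_{\tilde R},\Gamma_{p_2}}\simeq(k[u]/(u^2))\pot{v}$ (with $v=u_{\tilde X}-u_{\tilde R}$), which has nonzero nilpotents. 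These are not isomorphic. What \emph{is} true is
\[\hat\calO_{X_R,\Gamma_x}\otimes_{\calO_X}\calO_{\tilde X}\;\simeq\;\hat\calO_{\tilde X_R,\,\iota_R^{-1}(\Gamma_x)},\]
the completion of $\tilde X_R$ (over the \emph{original} base $\Spec R$, not over $\Spec\tilde R$) along the preimage of $\Gamma_x$, which is a Cartier divisor finite flat of degree $\deg\iota$ over $\Spec R$, not a section. So the left-hand side of your claimed identification is the loop group of $\tilde G$ formed with respect to a multi-valued divisor in $\tilde X_R$, not a twisted loop group $\calL\tilde G$ evaluated over $\tilde R$; this is precisely why the reduction to $\Res(\calL\tilde G)$ fails. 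In addition, Lemma~\ref{evlem} requires $\calG$ to be \emph{smooth}, whereas Lemma~\ref{generalnonsense} assumes $\tilde G$ only affine of finite type, so the appeal to the evaluation isomorphism is out of scope from the start.

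The paper's proof does not pass through the loop-group presentation at all. It works directly on the level of the moduli problems: $\tilde X_T\to X_T$ is finite flat, so fpqc descent for affine schemes (the reference \cite[\S 2.6]{Levin}) identifies $\Res_{\tilde X/X}(\tilde G)$-torsors on $X_T$ with $\tilde G$-torsors on $\tilde X_T$, and trivializations away from $\Gamma_x$ match up with trivializations away from $\iota_T^{-1}(\Gamma_x)$. On the other side, a $T$-point of $\Res_{\tilde X/X}(\Gr(\tilde G,\tilde X))$ is a $\tilde G$-torsor on $\tilde X\times_k\tilde T$ trivialized away from the graph of $\tilde T\to\tilde X$, and a Beauville--Laszlo/Beilinson--Drinfeld-type gluing identifies this with a $\tilde G$-torsor on $\tilde X_T$ trivialized away from $\iota_T^{-1}(\Gamma_x)$; this is the descent content of the cited lemma. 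If you want to salvage your approach, you should argue at the level of torsors on $\tilde X_T$ with the divisor $\iota_T^{-1}(\Gamma_x)$ rather than via $\calL\tilde G$ over $\tilde R$.
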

\begin{proof}
By \cite[\S 7.6 Proof of Theorem 4]{BLR}, the sheaf of groups $\calG$ is affine and of finite type. The lemma follows from fpqc-descent for affine schemes, cf. \cite[\S 2.6]{Levin}. \end{proof}

\subsection{Geometric characterization of parahoric groups} Let $X$ be a smooth connected curve over a field $k$ which is either finite or algebraically closed. Let $|X|$ be the set of closed points in $X$. For any $x\in |X|$, we denote by $\calO_x$ the completed local ring of $X$ in $x$, and by $F_x$ its field of fractions.

\begin{dfn}\label{parahoricdfn}
A smooth affine group scheme $\calG$ over $X$ with geometrically connected fibers is called \emph{parahoric} if its generic fiber is reductive and for all $x\in |X|$, the group $\calG_{\calO_x}$ is a parahoric group scheme in the sense of \cite{BT2}.
\end{dfn}

\begin{thm}\label{indproj}
Let $\calG$ be a smooth affine group scheme over $X$ with geometrically connected fibers. \smallskip\\
i) Then $\calG$ is parahoric if and only if the fibers of $\Gr(\calG,X)\to X$ are ind-proper. \smallskip\\ 
ii) In case i), $\Gr(\calG,X)\to X$ is ind-proper, and Zariski locally on $X$ ind-projective.
\end{thm}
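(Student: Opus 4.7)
Both sides of the equivalence are local on $X$, depending only on $\calG_{\calO_x}$ at each closed point $x\in|X|$: by Corollary \ref{group1}(i) the fiber of $\Gr(\calG,X)\to X$ at $x$ is the local affine flag variety $LG_{F_x}/L^+\calG_{\calO_x}$ over the residue field at $x$. So I reduce to the local case $X=\Spec\calO_F$. The easy direction \emph{parahoric $\Rightarrow$ ind-proper} then follows from Corollary \ref{flagindproper} applied after base-change to the separable closure of $k$; ind-properness descends along faithfully flat base change.

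For the converse \emph{ind-proper $\Rightarrow$ parahoric}, I plan to imitate the classical characterization of parabolic subgroups via Borel's fixed-point theorem. Fix an alcove $\fraka_C\subset\scrB(G,F)$ and let $I=L^+\calG_{\fraka_C}$ be the associated Iwahori, a pro-algebraic group whose finite-type truncations $\calG_{\fraka_C}^{[i]}$ are connected smooth affine solvable $k$-groups (the special fiber of an Iwahori is solvable, and the congruence kernels are unipotent). The Iwahori acts on $LG/L^+\calG$ by left translation, and on any finite-type closed subscheme $Y\subset(LG/L^+\calG)_\red$ the action factors through some $\calG_{\fraka_C}^{[i]}$ by Lemma \ref{actlem}. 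Under the hypothesis that $LG/L^+\calG$ is ind-proper, $Y$ is proper over $k$, so Borel's fixed-point theorem gives an $I$-fixed point $gL^+\calG\in Y$; equivalently, $g^{-1}Ig\subset L^+\calG$, so $\calG(\calO_F)$ contains a conjugate of $I(k)$. Since $\calG$ is smooth affine of finite type, $\calG(\calO_F)$ is bounded in $G(F)$; a bounded subgroup of $G(F)$ containing an Iwahori conjugate equals the parahoric subgroup $\calG_\fraka(\calO_F)$ for some facet $\fraka$ by Bruhat-Tits \cite{BT2}. Finally, uniqueness of the smooth affine $\calO_F$-model with geometrically connected fibers and prescribed parahoric subgroup forces $\calG=\calG_\fraka$.

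For part (ii), let $\calG$ be parahoric. Zariski-locally on $X$ choose a faithful representation $\calG\hookto\GL(\calE)$ into the general linear group of a vector bundle $\calE$ on $X$ such that $\GL(\calE)/\calG$ is affine in the fppf topology; such a pair is provided by the construction of Heinloth \cite[\S2 Ex.~(1)]{He}. By Lemma \ref{affgrass1}(i) the induced morphism $\Gr(\calG,X)\to\Gr(\GL(\calE),X)$ is a closed immersion. Lemma \ref{repGln} states that the target is Zariski-locally ind-projective over $X$, so $\Gr(\calG,X)\to X$ is Zariski-locally ind-projective. Ind-properness over all of $X$ is then immediate since ind-properness is Zariski-local on the base.

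The main obstacle is the Borel-fixed-point argument: one must confirm that the action of $I$ on each proper closed $Y\subset LG/L^+\calG$ really factors through a connected solvable finite-type quotient (so that Borel's theorem applies in its algebraic-group form), and one must then invoke the full force of Bruhat-Tits to (a) identify a bounded subgroup of $G(F)$ containing an Iwahori conjugate with a parahoric subgroup, and (b) promote the equality $\calG(\calO_F)=\calG_\fraka(\calO_F)$ of $\calO_F$-points to an isomorphism of group schemes via the uniqueness of the parahoric model with geometrically connected fibers.
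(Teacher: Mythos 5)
Your overall route differs genuinely from the paper's in both directions, but as written it has three gaps, two of which are serious.

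For the converse in part (i), you want to use Borel's fixed-point theorem to conjugate an Iwahori into $L^+\calG$ and then invoke the statement that a bounded subgroup of $G(F)$ containing an Iwahori conjugate is parahoric. That last statement is false in general: the normalizer of an Iwahori in $\mathrm{PGL}_2(F)$ is a bounded open subgroup containing the Iwahori, but it is not parahoric (its smooth model has disconnected special fiber). To rescue the argument you would have to bring the connectedness of the special fiber of $\calG$ to bear \emph{before} identifying the facet, e.g. by invoking the \'etoff\'e property of the smooth model of $\calG(\calO_F)$ to show that disconnectedness of the model would contradict connectedness of $\calG$; you only use connected fibers at the very last step, which is too late. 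You also never establish that $\calG_\eta$ is reductive, which the Bruhat--Tits theory you are using presupposes; the paper deduces this first from ind-properness of $\Gr(\calG,X)_{\bar\eta}$ via the argument in the appendix of \cite{Ga}. The paper's converse instead embeds $\calG(\calO)$ into a maximal bounded $\calG_a(\calO)$, prolongs this to a group-scheme morphism $\calG\to\calG_a$, passes to the maximal reductive quotient $\bar\calG_a$ of the special fiber, shows the image of $L^+\calG$ there is a parabolic $\bar\calG'$, identifies the resulting parahoric $\calG_\fraka$, and concludes $\calG(\calO)=\calG_\fraka(\calO)$ via Mittag--Leffler. A fixed-point argument could in principle be made to work, but it would be a different, not obviously shorter, proof.

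For part (ii) you assert that Heinloth's construction gives a faithful representation $\calG\hookto \GL(\calE)$ with $\GL(\calE)/\calG$ \emph{affine}. Heinloth's result, and the paper's citation of it in Lemma \ref{affgrass1}, only provides a \emph{quasi-affine} quotient; for a parahoric group scheme over $\calO_F$ affineness of the quotient is not available. With only quasi-affineness, Lemma \ref{affgrass1}(i) gives merely a quasi-compact immersion $\Gr(\calG,X)\to\Gr(\GL(\calE),X)$, not a closed one, and ind-projectivity of the target does not descend. This is precisely why the paper first proves ind-properness of $\Gr(\calG,X)\to X$ directly (reducing to closed points, using the delicate analysis of the torus case in Lemma \ref{torlem}, the specialization map of Lemma \ref{specitorus}, and the flat-curve criterion of Lemma \ref{funnylem}) and only then concludes ind-projectivity by upgrading the immersion to a closed immersion. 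That passage from \emph{fiberwise} ind-proper to ind-proper for the family over $\calO_F$ is the real content of part (ii), and your proposal skips it entirely.
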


The proof of ii) is explained in \S 1.5 below, based on \S 1.4, in which we study the case of tori.

\begin{proof}[Proof of Theorem \ref{indproj} i).] Let $\calG$ be parahoric group scheme over the curve $X$. By Corollary \ref{group1}, the generic fiber $\Gr(\calG,X)_\eta$ is the affine Grassmannian associated with the reductive group $\calG_\eta$, and hence ind-proper. Let $x\in |X|$. Then $\Fl=\Gr(\calG,X)_x$ is the affine flag variety associated with the parahoric group scheme $\calG_{\calO_x}$ which is ind-proper, cf. Corollary \ref{flagindproper} i).
\vskip 0.1 cm

Conversely assume that the fibers of $\Gr(\calG,X)\to X$ are ind-proper. In particular, $\Gr(\calG,X)_{\oeta}$ is ind-proper where $\oeta$ denotes a geometric generic point. But $\Gr(\calG,X)_{\oeta}$ is the affine Grassmannian associated with the linear algebraic group $\calG_{\oeta}$, and it follows from the argument in \cite[Appendix]{Ga} that $\Gr(\calG,X)_{\oeta}$ is ind-proper if and only if $\calG_{\oeta}$ is reductive. Fix $x\in |X|$. We need to show that $\calG_{O_x}$ is a parahoric group scheme. We may assume that $k$ is algebraically closed. Let $\calO=\calO_{x}$, $F=F_x$, and choose a uniformizer $t=t_x$ in $\calO$. Let $G=\calG_{F}$ and $\calG=\calG_{\calO}$. The subgroup $\calG(\calO)\subset G(F)$ is bounded for the $t$-adic topology, and hence is contained in some maximal bounded subgroup $\calG(\calO)_{\max}$. By \cite[\S 3.2]{Ti}, the group $\calG(\calO)_{\max}=\calG_{\al}(\calO)$ is the stabilizer of some point $\al$ of the Bruhat-Tits building. By \cite[Proposition 1.7.6]{BT2}, the inclusion $\calG(\calO)\subset \calG_{\al}(\calO)$ prolongs into a morphism $\tau\colon  \calG\to \calG_{\al}$ of group schemes over $\calO$. Since the special fiber of $\calG$ is connected, we may replace $\calG_{\al}$ by its neutral component, and hence assume that $\calG_{\al}$ is the parahoric group scheme associated with the facet containing ${\al}$. Consider the induced morphism 
\[\tau_*\colon  LG/L^+\calG\;\longto\; LG/L^+\calG_{\al}.\]
The fiber over the base point $e_{\calG_{\al}}$ in $LG/L^+\calG_{\al}$ is the fpqc-quotient $L^+\calG_{\al}/L^+\calG$, and this is representable by an ind-proper ind-scheme since $LG/L^+\calG$ is ind-proper by assumption. Let $\bar{\calG}_{\al}$ be the maximal reductive quotient of the special fiber $\calG_{\al}\otimes k$, and let $\bar{\calG}'$ be the scheme theoretic image of $L^+\calG$ under the morphism $L^+\calG_{\al}\to \calG_{\al}\otimes k\to \bar{\calG}_{\al}$, $t\mapsto 0$. The quotient $\bar{\calG}_{\al}/\bar{\calG}'$ is representable by a separated scheme of finite type over $k$, and the morphism
\[L^+\calG_{\al}/L^+\calG\;\longto\; \bar{\calG}_{\al}/\bar{\calG}'\]
is surjective. Hence $\bar{\calG}_{\al}/\bar{\calG}'$ is proper, i.e. $\bar{\calG}'$ is a parabolic subgroup of $\bar{\calG}_{\al}$. The preimage of $\bar{\calG}'(k)$ under the reduction $\calG_{\al}(\calO)\to\bar{\calG}_{\al}(k)$ is by \cite[3.5.1]{Ti} a parahoric subgroup of $G(F)$ associated with some facet $\fraka$. Let $\calG_\fraka$ be the corresponding parahoric group scheme. The morphism $\calG\to \calG_{\al}$ factorizes as $\calG\to \calG_\fraka\to \calG_{\al}$. For $i\geq 0$, let 
\[\calG_i\;=\;\Res_{(\calO/t^{i+1})/k}(\calG\otimes \calO/t^{i+1})\]
be the Weil restriction. Then $\calG_i$ is a connected smooth affine group scheme over $k$, and $L^+\calG\simeq \varprojlim_i\calG_i$. Let $\calG_{\fraka,i}$ be defined analogously. One shows as above that $\calG_{\fraka,i}/\calG'_i$ is representable by a connected proper scheme. Because the kernel of $L^+\calG_\fraka\to \bar{\calG}_\fraka$ is pro-unipotent, it follows that $\calG_{\fraka,i}=\calG'_i$ for all $i\geq 0$. Hence, the morphism $\calG(\calO/t^{i+1})\to \calG_\fraka(\calO/t^{i+1})$ is surjective, and the kernel is finite by dimension reasons. Using Mittag-Leffler we see that $\calG(\calO)=\calG_\fraka(\calO)$ is parahoric.
\end{proof}

\subsection{The case of tori} In this subsection, we assume that the field $k$ is algebraically closed. Let $\calT$ be a smooth affine group scheme with geometrically connected fibers over $X$. If the generic fiber $\calT_\eta$ is a torus, then $\calT$ is a parahoric group scheme if and only if $\calT$ is the connected lft-Ner\'on model of $\calT_\eta$, cf. \cite{PR}. See \cite[\S 10.1-10.3]{BLR} for the definition of the lft-Ner\'on model.

\begin{lem}\label{torlem} 
Let $\calT$ be a parahoric group scheme over $X$. Then the generic fiber $\calT_\eta$ is a torus if and only if $\Gr(\calT,X)\to X$ is ind-finite.
\end{lem}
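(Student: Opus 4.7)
The plan is to combine the Schubert stratification of Proposition~\ref{schubertstr} with the ind-properness from Theorem~\ref{indproj}(ii). The central mechanism is that the Iwahori-Weyl length function vanishes identically on ${_\fraka W}^\fraka$ if and only if the relative root system of $\calT_\eta$ is empty, i.e., $\calT_\eta$ is a torus.

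\emph{Forward direction.} Assume $\calT_\eta$ is a torus. For every $x\in|X|$ and the facet $\fraka$ giving $\calT_{\calO_x}$, the Iwahori-Weyl group $W\simeq X_*(\calT_x)_I$ consists only of translations, the apartment has no walls, and the length function vanishes identically. By Proposition~\ref{schubertstr}(i) each stratum $\mathring Y_w$ is a single point, so by Corollary~\ref{flagindproper} every fiber of $\Gr(\calT,X)\to X$ is a disjoint union of reduced $k$-points; the same argument applies to the generic fiber. Combined with the ind-properness from Theorem~\ref{indproj}(ii), each closed subscheme in an exhausting family is proper and quasi-finite over $X$, hence finite, so $\Gr(\calT,X)\to X$ is ind-finite.

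\emph{Converse.} Assume $\Gr(\calT,X)\to X$ is ind-finite and fix a closed point $x\in|X|$. The fiber $\Fl_{\calT_{\calO_x}}$ is ind-finite over the separably closed field $k$, so Proposition~\ref{schubertstr}(i) forces $l(w)=0$ for every $w\in{_\fraka W}^\fraka$. Suppose for contradiction that $\calT_\eta$ is not a torus; then $\calT_x$ has positive semisimple rank and nonempty relative root system $\Phi$. Choose a dominant cocharacter $\mu\in X_*(T)_I^+$ with $\langle 2\rho,\mu\rangle$ sufficiently large, where $\rho$ is half the sum of positive relative roots. The translation $t_\mu\in W$ then has length $l(t_\mu)=\langle 2\rho,\mu\rangle$; since multiplication by elements of the finite group $W_\fraka$ changes length by at most a fixed constant, the double-coset representative ${_\fraka t_\mu^\fraka}\in{_\fraka W}^\fraka$ still has positive length for $\mu$ large enough, contradicting $l\equiv 0$ on ${_\fraka W}^\fraka$. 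Hence $\calT_\eta$ is a torus. The main technical obstacle is this length estimate on ${_\fraka W}^\fraka$; granting the standard formula $l(t_\mu)=\langle 2\rho,\mu\rangle$ for dominant $\mu$ and the finiteness of $W_\fraka$, everything else reduces directly to the Schubert structure already established.
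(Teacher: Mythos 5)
The proposal has a serious circularity problem in the forward direction. You invoke Theorem~\ref{indproj}(ii) (ind-properness of $\Gr(\calG,X)\to X$ for parahoric $\calG$) to conclude that the fiberwise finite strata assemble into an ind-finite family over $X$. But the paper proves Theorem~\ref{indproj}(ii) in \S 1.5, which is explicitly ``based on \S 1.4, in which we study the case of tori'' --- and the relevant step in \S 1.5 is precisely an appeal to Lemma~\ref{torlem}: ``Let $\calT_\eta\subset\calG_\eta$ be a maximal torus \dots\ By Lemma~\ref{torlem}, $\Gr(\calT,X)$ is ind-proper.'' So you cannot use Theorem~\ref{indproj}(ii) here. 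What you \emph{can} use is the fiberwise ind-properness (Corollary~\ref{flagindproper}) and the Kottwitz-morphism argument to get ind-quasi-finiteness, but going from ``fiberwise ind-proper and ind-quasi-finite'' to ``ind-finite over $X$'' is exactly the content that needs an independent argument, and that is where all the work in the paper's proof lies.

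The paper's actual proof of the hard direction avoids any appeal to Theorem~\ref{indproj}(ii). After noting ind-quasi-finiteness via the Kottwitz morphism, it reduces by fpqc descent to the completed local rings $\calO_x$, and then: (a) in the \emph{induced} case $\calT\simeq\prod_i\Res_{X_i/X}(\bbG_m)^{n_i}$, it passes through Lemma~\ref{generalnonsense} (Weil restriction commutes with the BD-Grassmannian construction) and Lemma~\ref{reductivelem}(i) (the $\bbG_m$-Grassmannian is locally constant), using separability of the residue extensions to split the completed local rings; (b) in the general case, it chooses a flasque resolution $1\to T''\to T'\to\calT_\eta\to 1$ with $T'$ induced, extends $T'$ to a parahoric $\calT'$ via normalizations of $X$, uses the N\'eron mapping property to extend the morphism, and then shows $\pi_*\colon\Gr(\calT',X)\to\Gr(\calT,X)$ is surjective on both fibers via \cite[\S 7 (7.2.5)]{Kott}, passing ind-properness across the surjection. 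None of this is captured by your argument.

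Your converse direction is correct but more elaborate than the paper's: the paper simply observes that if $\calT_\eta$ is not a torus then $\Gr(\calT,X)_{\oeta}$, being the affine Grassmannian of a reductive group of positive semisimple rank, is already not ind-finite over the geometric generic point. Your length estimate on $_\fraka W^\fraka$ at a closed fiber (using $l(t_\mu)=\langle 2\rho,\mu\rangle$ and finiteness of $W_\fraka$) is a valid alternative, but it is doing extra work that can be sidestepped by looking at the open fiber instead.
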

\begin{proof} If $\calT_\eta$ is not a torus, then $\Gr(\calT,X)_\oeta$ is not ind-finite. Conversely assume that $\calT_\eta$ is a torus. The Kottwitz morphism implies that $\Gr(\calT,X)\to X$ is ind-quasi-finite, and it is enough to show the ind-properness.

There is a non-empty open subset $U\subset X$ such that $\calT\times_XU$ is reductive which implies the ind-properness of $\Gr(\calT,X)|_U\to U$. For $x\in |X|$, let $\calO_x$ be the complete local ring at $x$ with fraction field $F_x$. By fpqc-descent we are reduced to showing that $\Gr(\calT,X)\otimes\calO_x$ is ind-finite for every $x\in |X|$. Fix $x\in |X|$. \smallskip\\
\emph{The induced case.} Assume first that $\calT\simeq\prod_{i=1}^k\Res_{X_i/X}(\bbG_m)^{n_i}$ where $X_i\to X$ are finite flat generically \'etale\footnote{Otherwise, the generic fiber of $\calT$ is not reductive.} surjections of smooth connected curves. By Lemma \ref{generalnonsense}, we are reduced to considering the case of 
\[\Res_{\tilde{X}/X}(\Gr(\bbG_m,\tilde{X})).\]
Then on complete local rings 
\[\calO_{\tilde{X}}\otimes \calO_x\;\simeq\;\prod_{\tilde{x}\mapsto x} \calO_{\tilde{x}}\]
because the field extension on the generic points is separable. Since Weil restriction is compatible with base change, we are reduced to proving that $\Res_{\calO_{\tilde{x}}/\calO_x}(\Gr(\bbG_m,\tilde{X})\otimes\calO_{\tilde{x}})$ is ind-finite. This follows from Lemma \ref{reductivelem} i) using that the reduced locus of $\Gr(\bbG_m,\tilde{X})\otimes\calO_{\tilde{x}}$ is the constant scheme $\underline{\bbZ}$, and the tautological equality $\Res_{\calO_{\tilde{x}}/\calO_x}(\Spec(\calO_{\tilde{x}}))=\Spec(\calO_x)$.

In the general case, after replacing $X$ by a Zariski neighbourhood of $x$ there exists a morphism of parahoric group schemes 
\begin{equation}\label{inducedresol}
\pi\colon  \calT'\;\longto\;\calT,
\end{equation}
such that $\calT'\simeq\prod_{i=1}^k\Res_{X_i/X}(\bbG_m)^{n_i}$ is induced as above, and such that $\pi_*\colon \calY'\to\calY$ is surjective where $\calY=\Gr(\calT,X)\otimes\calO_x$ (resp. $\calY'=\Gr(\calT',X)\otimes\calO_x$). The morphism \eqref{inducedresol} is constructed as follows. By \cite{CT}, there exists an exact sequence of tori $1\to T''\to T'\to\calT_\eta\to 1$ over $\eta$ where $T'$ is induced and $T''$ is flasque. As $X_i$ we take the normalization of $X$ in the field extensions $\eta_i/\eta$ defining $T'$. This allows to define $\calT'$ in a Zariski neighbourhood of $x$ such that $\calT'_\eta=T'$. Then $\calT'$ is the connected component of the lft-Ner\'on model of $\calT'_\eta$, i.e. $\calT'$ is parahoric. Using the Ner\'on mapping property for $\calT$, the group morphism on generic fibers $\calT'_\eta\to\calT_\eta$ extends to a group morphism $\pi\colon \calT'\to\calT$. Then $\pi_*\colon \calY'\to\calY$ is surjective over $\sF_x$ because $\calT'_{\sF_x}\to\calT_{\sF_x}$ is a surjection of split tori, and $\pi_*$ is surjective on $k$-points by \cite[\S 7]{Kott}: \\ Indeed $\calY(k)=\calT(F_x)/\calT(\calO_x)$ (resp. $\calY'(k)=\calT'(F_x)/\calT'(\calO_x)$) and the Kottwitz morphism gives a commutative diagram of abelian groups
\[
\begin{tikzpicture}[baseline=(current  bounding  box.center)]
\matrix(a)[matrix of math nodes, 
row sep=1.5em, column sep=2em, 
text height=1.5ex, text depth=0.45ex] 
{\calY'(k)& \calY(k) \\ 
X_*(\calT'_{F_x})_{I_x} &X_*(\calT_{F_x})_{I_x}& 0, \\}; 
\path[->](a-1-1) edge node[above] {$\pi_*(k)$} (a-1-2);
\path[->](a-2-1) edge (a-2-2);
\path[->](a-1-1) edge node[right] {$\simeq$} (a-2-1);
\path[->](a-1-2) edge node[right] {$\simeq$} (a-2-2);
\path[->](a-2-2) edge (a-2-3);
\end{tikzpicture}
\]
where $I_x$ denotes the inertia group of $F_x$. The lower row is surjective because $T''$ is flasque, cf. \cite[\S 7 (7.2.5)]{Kott}. This shows the surjectivity of $\pi_*\colon \calY'\to\calY$. Since $\calY'$ is ind-proper and $\calY$ is separated, $\calY$ is also ind-proper. The lemma follows. 
\end{proof}

Let $\calT$ be a parahoric group scheme whose generic fiber is a torus. Fix $x\in |X|$, and let $\Gr_\calT=\Gr(\calT,X)\otimes\calO_x$. Since $\Gr_\calT$ is ind-proper, there is a specialization map
\begin{equation}\label{specimap}
\spe\colon  \Gr_\calT(\sF) \longto \Gr_\calT(k).
\end{equation}
Note that $\Gr_\calT$ is a sheaf of groups because $\calT$ is commutative, and that $\spe$ is a group morphism. The generic fiber $\Gr_{\calT,\sF}$ is by Corollary \ref{group1} equal to the affine Grassmannian associated with the split torus $\calT_\sF$, and hence $\Gr_\calT(\sF)\simeq X_*(T)$ as groups. The Kottwitz morphism $\kappa_T\colon \Gr_\calT(k)\to X_*(T)_I$ is an isomorphism, cf. \cite[\S 7]{Kott}. The following lemma is proven in \cite[Proposition 3.4]{Z} in the tamely ramified case.

\begin{lem}\label{specitorus}
There is a commutative diagram of abelian groups
\[
\begin{tikzpicture}[baseline=(current  bounding  box.center)]
\matrix(a)[matrix of math nodes, 
row sep=1.5em, column sep=2em, 
text height=1.5ex, text depth=0.45ex] 
{\Gr_\calT(\sF)& \Gr_\calT(k)\\ 
X_*(T) &X_*(T)_I \\}; 
\path[->](a-1-1) edge node[above] {$\spe$} (a-1-2);
\path[->](a-2-1) edge (a-2-2);
\path[->](a-1-1) edge node[right] {$\simeq$} (a-2-1);
\path[->](a-1-2) edge node[right] {$\simeq$} node[left] {$\kappa_T$} (a-2-2);
\end{tikzpicture}
\]
where $X_*(T)\to X_*(T)_I$ is the canonical projection. 
\end{lem}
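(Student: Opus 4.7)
The plan is to follow the reduction strategy from the proof of Lemma~\ref{torlem}, using the naturality in $\calT$ of both the specialization map (coming from the valuative criterion applied to the ind-proper $\Gr_\calT$) and the Kottwitz morphism (see \cite[\S7]{Kott}), as well as of the two vertical isomorphisms. Since $\pi\colon\calT'\to\calT$ from \eqref{inducedresol} is surjective on both $\sF$- and $k$-points, and every arrow of the diagram is a morphism of abelian groups compatible with $\pi_*$, commutativity for $\calT'$ implies commutativity for $\calT$. This reduces the problem to the case $\calT'\simeq\prod_i\Res_{X_i/X}(\bbG_m)^{n_i}$ induced.

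Next, the decomposition $\calO_{X_i}\otimes_{\calO_x}\simeq\prod_{\tilde x\mapsto x}\calO_{\tilde x}$ together with Lemma~\ref{generalnonsense} reduces further to the case $\calT=\Res_{\calO_{\tilde x}/\calO_x}(\bbG_m)$ for a single finite flat, generically separable extension $\calO_{\tilde x}/\calO_x$ (generic separability being forced by the footnote in Lemma~\ref{torlem}). For $\bbG_m$ the statement is trivial: by Lemma~\ref{reductivelem}~i) the reduced locus of $\Gr(\bbG_m,\tilde X)\otimes\calO_{\tilde x}$ is the constant group scheme $\underline{\bbZ}$, on which both horizontal arrows act as the identity. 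To promote this to the Weil restriction, unfold on $\sF$- and $k$-points. Separability of $\tilde F_{\tilde x}/F_x$ gives $\calO_{\tilde x}\otimes_{\calO_x}\sF\simeq\sF^{[\tilde F_{\tilde x}:F_x]}$, indexed by the $F_x$-embeddings $\tilde F_{\tilde x}\hookrightarrow\sF$ that $I$ permutes, so the left vertical identification $\Gr_\calT(\sF)\simeq\bbZ^{[\tilde F_{\tilde x}:F_x]}=X_*(T)$ is precisely the product decomposition into $\bbG_m$-factors. On the special side, the Kottwitz isomorphism $\Gr_\calT(k)\simeq X_*(T)_I=\bbZ$ is by functoriality the sum of the individual $\bbG_m$-valuations. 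Commutativity then becomes the cocharacter-by-cocharacter statement that the specialization of $(t^{\mu_\phi})_\phi$ equals $\sum_\phi\mu_\phi$, which follows componentwise from the trivial case of $\bbG_m$.

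The main obstacle, and the reason \cite{Z} was limited to the tame case, is the wildly ramified case, in which $\cha(k)$ divides the ramification index and $\calO_{\tilde x}\otimes_{\calO_x}k$ is a highly non-reduced Artinian $k$-algebra, so the special fiber of the Weil restriction has no direct description in terms of the residue field alone. The reason the coordinate-wise calculation above still goes through uniformly is that both horizontal arrows of the diagram are evaluated on field-valued points $\sF$ and $k$, where the Weil restriction splits into a product over separable embeddings and where only the reduced structure matters; wild ramification affects only the nilpotent part of $\calO_{\tilde x}\otimes_{\calO_x}k$, which is invisible to such evaluations.
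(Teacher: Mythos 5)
Your reduction to the induced case is valid and parallels the paper's, though you use the two-term resolution $\pi\colon\calT'\to\calT$ from \eqref{inducedresol} where the paper goes further and quotes Kottwitz's three-term resolution $T''\to T'\to T\to 1$; both work, and the surjectivity-of-$\pi_*$/functoriality argument you sketch is correct. The divergence, and the gap, is in the induced case itself. When $\calT=\Res_{\calO_{\tilde x}/\calO_x}(\bbG_m)$, it is true that $\Gr_\calT(\sF)\simeq\bbZ^n$ (indexed by embeddings $\phi$) and $\Gr_\calT(k)\simeq\bbZ$, but the specialization map $\spe\colon\bbZ^n\to\bbZ$ is \emph{not} a product of $n$ independent $\bbG_m$-specializations: the $\sF$-fiber of $\Gr_\calT$ splits as $\Gr_{\bbG_m}^n$, but the $k$-fiber is a single affine Grassmannian $\tilde F^\times/\calO_{\tilde F}^\times$, and there is no morphism realizing the $\phi$-th coordinate of the generic fiber inside the special fiber. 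So the sentence ``the specialization of $(t^{\mu_\phi})_\phi$ equals $\sum_\phi\mu_\phi$, which follows componentwise from the trivial case of $\bbG_m$'' asserts the desired conclusion but gives no mechanism for it.

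What the paper does instead, and what actually closes this gap, is to pass to the maximal $F$-split subtorus $A\subset T$ (for $T=\Res_{\tilde F/F}(\bbG_m)$ this is the diagonal $\bbG_m$). Because $A$ comes from a split torus over $k$, $\Gr_{\calA}$ is constant over $\calO_x$ and $\spe$ is literally the identity on $\Gr_\calA$. Functoriality of $\spe$ and of Kottwitz then shows that the two candidate maps $X_*(T)\to X_*(T)_I$ agree on $X_*(A)=X_*(T)^I$, which has finite index in $X_*(T)_I$, so they agree rationally; torsion-freeness of $\Gr_\calT(k)$ for induced $T$ then upgrades this to an integral equality. (An equivalent patch for your phrasing: compare both maps after composing with the norm $\Nm\colon\Res_{\tilde F/F}(\bbG_m)\to\bbG_m$; on the generic fiber $\Nm_*$ is the sum $\bbZ^n\to\bbZ$, and on the special fiber $\Nm_*$ is multiplication by the residue degree $f$, which is $1$ because $k$ is separably closed and $\tilde F/F$ is separable, hence injective, so the $\bbG_m$-case transfers.) Either way, some such argument is needed; the componentwise reduction as written does not go through. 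Your remark about wild ramification — that only the reduced structure matters for field-valued points — is correct but beside the point here, since the issue is not nilpotents but the failure of the special fiber to split over $\phi$.

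Separately, a small imprecision: you write $X_*(T)_I=\bbZ$, which requires $I$ to act transitively on the embeddings, i.e.\ a single $\tilde x$ over $x$; after the decomposition $\calO_{\tilde X}\otimes\calO_x\simeq\prod_{\tilde x\mapsto x}\calO_{\tilde x}$ one must treat each local factor, so $X_*(T)_I$ is $\bbZ^r$ with $r$ the number of such points. This does not affect the substance, only the bookkeeping.
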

\begin{proof}
We show that $X_*(T)\simeq \Gr_\calT(\sF)\to \Gr_\calT(k)\simeq X_*(T)_I$ is the canonical projection. Let $T''\to T'\to T\to 1$ be a resolution of $T$ by induced tori $T', T''$ as in \cite[\S 7 (7.2.5)]{Kott}. Using the argument in the proof of Lemma \ref{torlem}, we obtain an exact sequence $\Gr_{\calT''}\to\Gr_{\calT'}\to\Gr_\calT\to 1$ of sheaf of groups over $\calO_x$. This gives a commutative diagram of abelian groups
\[
\begin{tikzpicture}[baseline=(current  bounding  box.center)]
\matrix(a)[matrix of math nodes, 
row sep=1.5em, column sep=2em, 
text height=1.5ex, text depth=0.45ex] 
{\Gr_{\calT''}(\sF)&\Gr_{\calT'}(\sF) & \Gr_{\calT}(\sF)  & 0\\ 
\Gr_{\calT''}(k)&\Gr_{\calT'}(k)& \Gr_{\calT}(k)  & 0\\}; 
\path[->](a-1-1) edge node[right] {$\spe$} (a-2-1); 
\path[->](a-1-2) edge (a-1-3); 
\path[->](a-1-3) edge node[right] {$\spe$}  (a-2-3); 
\path[->](a-1-2) edge node[right] {$\spe$} (a-2-2);
\path[->](a-2-2) edge  (a-2-3); 
\path[->](a-1-3) edge  (a-1-4); 
\path[->](a-2-3) edge (a-2-4); 
\path[->](a-1-1) edge (a-1-2);
\path[->](a-2-1) edge (a-2-2);
\end{tikzpicture}
\]
with exact rows, and we may reduce to the case that $T$ is induced. Let $A\subset T$ be the maximal split subtorus. Since $A$ is defined over $k$, we see that $\Gr_{A,\red}\simeq \underline{X_*(A)}$ is the constant group scheme over $\calO_x$ associated with $X_*(A)$, and hence $\spe$ is just the identity. Consider the commutative diagram of abelian groups 
\[
\begin{tikzpicture}[baseline=(current  bounding  box.center)]
\matrix(a)[matrix of math nodes, 
row sep=1.5em, column sep=2em, 
text height=1.5ex, text depth=0.45ex] 
{0&\Gr_A(\sF)& \Gr_T(\sF)\\ 
 0&\Gr_A(k) & \Gr_\calT(k). \\}; 
\path[->](a-1-1) edge (a-1-2); 
\path[->](a-2-1) edge (a-2-2);
\path[->](a-1-2) edge (a-1-3);
\path[->](a-2-2) edge (a-2-3);
\path[->](a-1-2) edge node[right] {$\spe$} (a-2-2);
\path[->](a-1-3) edge node[right] {$\spe$} (a-2-3);
\end{tikzpicture}
\]
with exact rows. Note that $\Gr_A(k)$, $\Gr_\calT(k)$ have the same rank. Hence, the composition $X_*(T)_\bbQ\simeq \Gr_\calT(\sF)_\bbQ\to \Gr_\calT(k)_\bbQ\simeq X_*(T)_{\bbQ,I}$ rationally is the canonical projection. The lemma follows from the fact that $\Gr_\calT(k)$ is torsionfree because $T$ is induced.
\end{proof}

\subsection{Proof of the ind-projectivity} Let us prove Theorem \ref{indproj} ii). Let $\calG$ be a parahoric group scheme over $X$. Then $\Gr(\calG,X)\to X$ is fiberwise ind-proper, and we need to show that it is ind-proper and, Zariski locally on $X$, ind-projective. Let $\iota\colon \calG\to \GL(\calE)$ be a faithful representation such that $\GL(\calE)/\calG$ is quasi-affine (cf. \cite[\S 2 Example (1)]{He}), where $\calE$ is some vector bundle on $X$. Then $\iota_*\colon  \Gr(\calG,X)\to \Gr(\GL(\calE),X)$ is representable by an immersion, cf. Lemma \ref{affgrass1} ii). It is enough to prove that $\Gr(\calG,X)\to X$ is ind-proper. Since $\calG_\eta$ is reductive, there is a non-empty open subset $U\subset X$ such that $\calG_U$ is reductive. Lemma \ref{reductivelem} ii) shows that $\Gr(\calG,X)|_U\to U$ is ind-proper. By fpqc-descent, we are reduced to proving that $\Gr(\calG,X)\otimes \calO_{x}$ is ind-proper for every $x\in |X|$. We may assume that $k$ is algebraically closed. Fix $x\in |X|$, and let $\calO=\calO_x$ and $F=F_x$. We claim that the reduced locus of $\Gr(\calG,X)\otimes \calO$ can be written as
\begin{equation}\label{globindrep}
(\Gr(\calG,X)\otimes \calO)_{\text{red}}\;=\;\varinjlim_{\ga\in J}M_\ga,
\end{equation}
where $M_\ga$ are closed subschemes with the following properties: The $M_\ga$ are separated schemes of finite type over $\calO$ such that $M_\ga\to \Spec(\calO)$ is surjective flat, and the generic fiber $M_\ga\otimes F$ is connected. Since $\Gr(\calG,X)$ is fiberwise ind-proper, Lemma \ref{funnylem} below reduces us to constructing the ind-presentation \eqref{globindrep}. Let $J$ be the set of Galois orbits of $(\calL^+\calG)_{\sF}$-orbits in $\Gr(\calG,X)\otimes \sF$. Then every $\ga\in J$ defines a connected closed reduced subscheme $M_{\ga,F}$ of $\Gr(\calG,X)\otimes F$. Let $M_\ga$ be the scheme theoretic closure of $M_{\ga,F}$ in $\Gr(\calG,X)\otimes\calO$. Then $M_\ga$ is a flat reduced $(\calL^+\calG)_\calO$-equivariant closed subscheme of $\Gr(\calG,X)\otimes \calO$ (use Lemma \ref{actlem}). Let $\calT_\eta\subset \calG_\eta$ be a maximal torus, and denote by $\calT$ the scheme theoretic closure in $\calG$. Then $\calT$ is a parahoric group scheme over $X$. By Lemma \ref{torlem}, $\Gr(\calT,X)$ is ind-proper, and hence $\Gr(\calT,X)\to \Gr(\calG,X)$ is a closed immersion. The ind-presentation \eqref{globindrep} follows from Lemma \ref{specitorus} noting that the affine flag variety in the special fiber of $\Gr(\calG,X)\otimes\calO$ is covered by the orbit closures of the translation elements. This proves Theorem \ref{indproj} ii).
\hfill\ensuremath{\Box} 

\vspace{0.5cm}

In the proof above we used the following lemma, a special case of [EGA IV, 15.7.10] under the hypothesis that the geometric fibers are connected. If the base is a complete discrete valuation ring, the hypothesis on the fibers can be weakened. 

\begin{lem}\label{funnylem}
Let $Y$ be a separated scheme of finite type over a complete discrete valuation ring $\calO$. Assume that $Y\to \Spec(\calO)$ is surjective flat, and that the generic fiber $Y_\eta$ is connected. Then $Y$ is proper if and only if the fibers $Y_\eta$ and $Y_s$ are proper.
\end{lem}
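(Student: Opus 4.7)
The ``only if'' direction is immediate from stability of proper morphisms under base change. For the converse, the plan is to compactify $Y$ over $\calO$ and then show the compactification coincides with $Y$. First I would invoke Nagata to choose an open immersion $j\colon Y \hookto \bar Y$ with $\bar Y \to \Spec(\calO)$ proper, and then replace $\bar Y$ by the scheme-theoretic closure $\bar Y'$ of $Y$ in it. Locally $\bar Y'=\Spec(B/K)$ with $K=\ker(B\to B[1/f])$ where $Y=\Spec(B[1/f])$; since $Y$ is $\calO$-flat, $B[1/f]$ is $t$-torsion-free, and so is the subring $B/K$, whence $\bar Y'$ is $\calO$-flat. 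Moreover $Y$ is a schematically dense open subscheme of $\bar Y'$, so it suffices to show $Y=\bar Y'$.

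On the generic fiber, $Y_\eta\hookto \bar Y'_\eta$ is open, closed (since $Y_\eta$ is proper by hypothesis), and dense (inherited from $Y\subset \bar Y'$), hence $Y_\eta=\bar Y'_\eta$. The same reasoning on the special fiber shows that $Y_s\hookto \bar Y'_s$ is clopen, and it is nonempty by the surjectivity of $Y\to \Spec\calO$. The crux is therefore to prove that $\bar Y'_s$ is connected; once this is known, the nonempty clopen $Y_s$ must exhaust $\bar Y'_s$, and combining with $Y_\eta=\bar Y'_\eta$ one finds that the open immersion $Y\hookto \bar Y'$ is a bijection on underlying spaces, hence an isomorphism, proving $Y$ is proper.

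For the connectedness of $\bar Y'_s$ I would apply Stein factorization to the proper morphism $\bar Y'\to \Spec\calO$, obtaining $\bar Y'\to \Spec\calO'\to \Spec\calO$ with $\calO'=H^0(\bar Y',\calO_{\bar Y'})$ a finite $\calO$-algebra and with geometrically connected fibers for $\bar Y'\to \Spec\calO'$. Flatness of $\bar Y'$ makes $\calO'$ torsion-free, hence $\calO$-flat, and flat base change gives $\calO'\otimes_\calO F= H^0(Y_\eta,\calO)$. Connectedness of $Y_\eta$ forces this ring to have no nontrivial idempotents, and the injection $\calO'\hookto \calO'\otimes F$ transfers this to $\calO'$. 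Completeness of the DVR $\calO$ implies that a finite $\calO$-algebra decomposes as a product of complete local rings, so the idempotent-free $\calO'$ is itself local, with residue field $k'$ finite over $k$. Then the special fiber $\bar Y'_s=\bar Y'\otimes_\calO k$ and the Stein fiber $\bar Y'\otimes_{\calO'}k'$ differ only by a nilpotent thickening and so share the same underlying topology; the latter is geometrically connected, so $\bar Y'_s$ is connected.

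The main obstacle is precisely this connectedness step: it is where both hypotheses --- completeness of $\calO$ and connectedness (rather than merely geometric connectedness) of $Y_\eta$ --- enter essentially. Without either one the Stein base $\Spec\calO'$ could split into several factors and $\bar Y'_s$ could decompose accordingly, allowing $Y_s$ to sit inside $\bar Y'_s$ as a proper clopen piece and obstructing the desired equality $Y=\bar Y'$.
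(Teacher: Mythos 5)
Your proof is correct and follows essentially the same architecture as the paper's argument: Nagata compactify, pass to the scheme-theoretic closure $\bar Y'$ of $Y$, observe $\bar Y'$ is $\calO$-flat, show $Y_\eta=\bar Y'_\eta$, show $\bar Y'_s$ is connected so that the nonempty clopen $Y_s$ fills it up, and conclude that the open immersion $Y\hookto\bar Y'$ is a bijection and hence an isomorphism. The one place where your argument diverges is the connectedness of $\bar Y'_s$. The paper concludes this via the claim that $\pi_*\calO_{\bar Y'}$ is finite free of rank $1$ over $\calO$; your route through Stein factorization is a bit longer but it brings out more transparently how the completeness hypothesis is used: $H^0(\bar Y',\calO_{\bar Y'})$ is a finite flat $\calO$-algebra without nontrivial idempotents (by connectedness of $Y_\eta$ alone, without geometric connectedness), and then Henselianity of the complete $\calO$ forces it to be local, so the Stein base is a single fat point and $\bar Y'_s$ has the same underlying space as the geometrically connected Stein fiber. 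This is the correct mechanism, and your explicit invocation of completeness at exactly this point is a good clarification. One small local imprecision: your description of $\bar Y'=\Spec(B/K)$ with $K=\ker(B\to B[1/f])$ tacitly assumes $Y$ sits in $\bar Y$ as a distinguished open, which need not hold; but since $Y$ is quasi-compact, on each affine chart $\Spec B\subset\bar Y$ one has $Y\cap\Spec B=\bigcup_i\Spec B[1/f_i]$ and the scheme-theoretic image is $\Spec(B/K)$ with $K=\ker\bigl(B\to\prod_i B[1/f_i]\bigr)$, and the same torsion-freeness argument applies, so the conclusion that $\bar Y'$ is $\calO$-flat stands.
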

\begin{proof}
Let $Y_\eta$ and $Y_s$ be proper. Note that by assumption both are non-empty. To prove properness we may assume that $Y$ is reduced. Let $\iota\colon  Y\to \bar{Y}$ be the Nagata compactification over $\calO$, i.e. $\bar{Y}$ is proper over $\calO$, and $\iota$ is an open immersion. Now replace $\bar{Y}$ by the scheme theoretic closure of $\iota(Y)$ in $\bar{Y}$. We claim that the open immersion $\iota\colon Y\to \bar{Y}$ is an isomorphism. Because $Y_\eta$ is proper, $\iota_\eta$ is an isomorphism onto a connected component of $\bar{Y}_\eta$. But since $Y$ is flat, the generic fiber $Y_\eta$ is open dense in $Y$ and hence, $\iota(Y_\eta)$ is open dense in $\bar{Y}$. It follows that $\iota_\eta$ is an isomorphism. Now since $\bar{Y}_\eta$ is open dense in $\bar{Y}$, and $\bar{Y}$ is reduced, it follows that $\pi\colon \bar{Y}\to \Spec(\calO)$ is flat. Hence, $\pi_*\calO_{\bar{Y}}$ is a finite free $\calO$-module of rank $1$ because $\bar{Y}_\eta$ is connected. By proper base change we have $\dim(H^0(\bar{Y}_s,\calO_{\bar{Y}_s}))=1$, and hence $\bar{Y}_s$ is connected. It follows that $\iota_s\colon Y_s\to\bar{Y}_s$ is an isomorphism because it is open and proper, $Y_s\not =\varnothing$ and $\bar{Y}_s$ is connected. All in all, $\iota$ is a fiberwise isomorphism between flat schemes. The lemma follows.   
\end{proof}

\section{The BD-Grassmannian associated with a facet}\label{globSchub} 
We define the global Schubert varieties which may be seen as analogues of local models in equal characteristic, cf. \cite{PRS}. These are introduced by Zhu \cite{Z} in the tamely ramified case. The results of this paragraph are used in the proof of Theorem B of the introduction in the next section.

Let $k$ be either a finite or an algebraically closed field, and let $G$ be a connected reductive group over the Laurent power series field $F=k\rpot{t}$. Let $\fraka$ be a facet in the extended Bruhat-Tits building $\scrB(G,F)$, and denote by $\calG=\calG_\fraka$ the associated parahoric group scheme over $\calO_F=k\pot{t}$. Then $\calG$ is a smooth affine group scheme with geometrically connected fibers. In a first step, we aim to descend $\calG$ to some smooth connected pointed curve $(X,x)$ with $\calO_x=\calO_F$. Even better:

\begin{lem} Let $\calH$ be a flat affine $\calO_F$-group scheme of finite type with $\calH_F$ connected reductive. Then $\calH$ descends to some open curve, i.e. there exists a smooth connected pointed curve $(X,x)$ with $\calO_x=\calO_F$ and a flat affine $X$-group scheme $\calH_X$ such that\smallskip\\
{\rm i)} the restriction $\calH_X|_{(X\bslash x)}$ is connected reductive; \smallskip \\
{\rm ii)} $\calH_X\otimes \calO_x \simeq \calH$ as $\calO_F$-groups.\smallskip\\
Moreover, if $\calH$ is smooth (resp. has geometrically connected fibers), then $\calH_X$ is smooth (resp. has geometrically connected fibers). 
\end{lem}
\begin{proof} 
Pick any global field $E$ and a place $v$ with $E_v=F$, i.e. $\calO_{E_v}=\calO_F$ on valuation rings. Denote by $\calO_{E,v}$ the algebraic local ring of $E$ at $v$, and let $\calO_{E,v}^h$ be its henselization, a henselian discrete valuation ring. Then on completions 
\[\hat{\calO}^h_{E,v}=\hat{\calO}_{E,v}=\calO_{E_v}= \calO_{F},\] 
and by Corollary \ref{reductivedescent2} the $\calO_F$-group $\calH$ descends to a flat affine $\calO_{E,v}^h$-group $\calH'$ of finite type. The ring $\calO_{E,v}^h$ is the direct limit over all finite \'etale $\calO_{E,v}$-algebras $B$ with $B/\frakm_vB=\calO_{E,v}/\frakm_v$ where $\frakm_v$ is the maximal ideal of $\calO_{E,v}$. Since $\calH'$ is of finite type, it is defined over a finite \'etale local $\calO_{E,v}$-algebra, i.e. over the algebraic local ring $\calO_{E',v'}$ of a finite separable extension $E'/E$ with place $v'$ such that $\hat{\calO}_{E',v'}=\hat{\calO}_{E,v}$. Hence, $\calH'$ descends to $\calH_X$ in a small neighborhood $X$ of $x:= v'\in \Spec(\calO_{E'})$. This proves the existence of $\calH_X$ with properties i) and ii) above. If $\calH$ is smooth, then $\calH_X$ is smooth by fpqc-descent. Geometrically connectedness of the fibers over some point $x'\in X$ may be tested on any algebraically closed field lying over $x'$. The lemma follows. 
\end{proof}

\begin{rmk}
If $G$ is tamely ramified over $F$, Zhu \cite[\S 2]{RZ} extends the parahoric group scheme over a pointed $\bbA^1_k$. In general, the curve $X$  seems to depend on the reductive group $G$. As B. Conrad pointed out to me, one could also ask the finer question whether $G$ descents to a specific global model $(E,v)$ of  the local field $F$ (instead of passing to some unknown finite separable $(E',v')$). Already for tori this does not seem to be obvious, if true at all. 
\end{rmk}

Denote by $\calG_X$ the extension of $\calG$ over some $(X,x)$ with $\calO_x=\calO_F$. Then $\calG_X$ is a smooth affine $X$-group scheme with geometrically connected fibers and is parahoric in the sense of Definition \ref{parahoricdfn}. 

Let $\sF$ be the completion of a separable closure of $F$ with valuation subring $\calO_\sF$. Let $S=\Spec(\calO_F)$, $\bar{S}=\Spec(\calO_\sF)$ with generic points $\eta,\oeta$ and special points $s,\os$. This leads to the $6$-tuple $(S,\bar{S},\eta,\oeta,s,\os)$.

\begin{dfn}\label{BDGrass}
i) The \emph{global (resp. global positive) loop group $\calL\calG$ (resp. $\calL^+\calG$) associated with $\fraka$} is the ind-group scheme (resp. group scheme) over $S$
\[\calL\calG\defined \calL\calG_X\times_X S \hspace{0.5cm}\text{(resp.}\;\; \calL^+\calG\defined \calL^+\calG_X\times_X S).\]
ii) The \emph{BD-Grassmannian $\Gr_\fraka$ associated with $\fraka$} is the ind-scheme over $S$
\[\Gr_\fraka\defined \Gr(\calG_X,X)\times_X S.\]
\end{dfn}

Note that the definitions do not depend on the choice of $X$. There is a left action
\begin{equation}\label{actBD}
\calL\calG\times_S\Gr_\fraka\;\longto\; \Gr_\fraka.
\end{equation}
Let us discuss the generic and the special fiber of \eqref{actBD}. Let $L_zG$ (resp. $L_z^+G$) be the functor on the category of $F$-algebras $L_zG\colon R\mapsto G(R\rpot{z})$ (resp. $L_z^+G\colon R\mapsto G(\pot{z})$) where $z$ is an additional variable. The \emph{affine Grassmannian $\Gr_G$} is the fpqc-quotient $\Gr_G=L_zG/L_z^+G$.  There is a left action
\[L_zG\times_F \Gr_G\;\longto\;\Gr_G.\]
Recall from \S \ref{affflagpara} the following objects: Let $LG$ (resp. $L^+\calG$) be the functor on the category of $k$-algebras $LG\colon R\mapsto G(R\rpot{t})$ (resp. $L^+\calG\colon R\mapsto \calG(R\pot{t})$). The affine flag variety $\Fl_\fraka$ is the fpqc-quotient $\Fl_\fraka=LG/L^+\calG$. There is a left action
\[LG\times_k \Fl_\fraka\;\longto\;\Fl_\fraka.\]
\begin{lem}\label{satcat1} The ind-scheme $\Gr_\fraka\to S$ is ind-projective, and \smallskip\\
i) the generic fiber $\Gr_{\fraka,\eta}$ is equivariantly isomorphic to $\Gr_G$.\smallskip\\
ii) the special fiber $\Gr_{\fraka,s}$ is equivariantly isomorphic to $\Fl_\fraka$.
\end{lem}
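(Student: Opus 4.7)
The ind-projectivity of $\Gr_\fraka \to S$ is immediate from Theorem \ref{indproj} ii): since $\calG_X$ is parahoric on $X$, the morphism $\Gr(\calG_X, X) \to X$ is ind-projective Zariski-locally on $X$, and $S = \Spec(\calO_F) \to X$ factors through every Zariski neighborhood of $x$, so ind-projectivity is inherited by the pullback $\Gr_\fraka = \Gr(\calG_X,X)\times_X S$. For the special fiber in ii), the $k$-point $s \in S$ corresponds under $S \to X$ to $x \in X(k)$, which has $t$ as a local parameter since $\calO_x = \calO_F = k\pot{t}$. Corollary \ref{group1} i) then gives an equivariant identification $\Gr_{\fraka, s} \simeq L\calG_{F_x}/L^+\calG_{\calO_{F_x}} = LG/L^+\calG = \Fl_\fraka$, with the $(\calL\calG)_s = LG$-action matching the standard $LG$-action from \S \ref{affflagpara}.

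For the generic fiber in i), I would view $\eta \in S$ as the $F$-point $\Spec(F) \to S \to X$ of $X$. Its graph in $X_F$ is cut out locally by $z := t_X - t$, where $t_X$ is a local parameter at $x$ on $X$ and $t \in F$ is its image under $\calO_F \hookrightarrow F$. Using $z$ as the local parameter at this $F$-point, Corollary \ref{group1} i) gives an equivariant isomorphism $\Gr_{\fraka, \eta} \simeq L\tilde\calG/L^+\tilde\calG$ formed in the variable $z$, where $\tilde\calG$ denotes the pullback of $\calG_X$ to $\Spec(F\pot{z})$ via $t_X \mapsto t + z$. Since $t + z$ is a unit in $F\pot{z}$, this pullback factors through the reductive locus of $\calG_X$, so $\tilde\calG$ is a reductive group scheme over the complete local ring $F\pot{z}$ whose closed fiber is the generic fiber $G$ of $\calG_X$ (obtained by specializing $t_X \mapsto t$). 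Rigidity of reductive group schemes over the Henselian local ring $F\pot{z}$ then yields $\tilde\calG \simeq G \otimes_F F\pot{z}$, and hence $\Gr_{\fraka, \eta} \simeq L_zG/L_z^+G = \Gr_G$ equivariantly for $(\calL\calG)_\eta = L_zG$.

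The main obstacle I anticipate is this last rigidity step, the canonical identification $\tilde\calG \simeq G \otimes_F F\pot{z}$. While the existence of such an isomorphism is clear from both sides being reductive with identical closed fiber, pinning down a canonical one (so the induced iso of BD-Grassmannians respects all loop-group equivariances) requires some care, possibly an \'etale base change to neutralize any twisting of $G$. A robust alternative is to bypass $\tilde\calG$ and instead compare the $R$-points of $\Gr_{\fraka, \eta}$ and $\Gr_G$ directly for every $F$-algebra $R$, using the Beauville-Laszlo-type gluing underlying Lemma \ref{evlem}.
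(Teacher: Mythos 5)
Your proposal is correct and follows essentially the same route as the paper, which simply cites Theorem~\ref{indproj} and Corollary~\ref{group1} without further comment: ind-projectivity is inherited from a Zariski neighbourhood of $x$, and both fibers are read off from Corollary~\ref{group1}~i). For the generic fiber your identification via rigidity of reductive group schemes over the Henselian local ring $F\pot{z}$ (smoothness of the $\operatorname{Isom}$-scheme plus Hensel's lemma, as in SGA~3) is indeed the point the paper leaves implicit, and it is correctly argued. The concern you raise at the end, about needing a canonical isomorphism $\tilde\calG \simeq G\otimes_F F\pot{z}$, is a non-issue: the lemma only asserts the existence of \emph{some} equivariant isomorphism, and any isomorphism of group schemes over $F\pot{z}$ induces, by the functoriality \eqref{fungroup} and the identifications of Lemma~\ref{evlem}, a loop-group-equivariant isomorphism of the Grassmannians; different choices differ by an automorphism of $\Gr_G$, which is harmless for the statement at hand.
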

\begin{proof}
This follows from Theorem \ref{indproj} and Corollary \ref{group1}.
\end{proof}

Next we introduce the global Schubert varieties which are reduced $\calL^+\calG$-orbit closures in $\Gr_\fraka$, cf. \cite{Z}. Let $A\subset G$ be a maximal $F$-split torus such that $\fraka\subset \scrA(G,A,F)$. Let $\breve{A}$ be a maximal $\nF$-split torus defined over $F$ with $A\subset \breve{A}$, cf. \cite{BT2}. Let $T=Z_G(\breve{A})$ be the centralizer which is a maximal torus since $G_\nF$ is quasi-split by Steinberg's Theorem. Let $\calT$ be the scheme theoretic closure of $T$ in $\calG$, which is a parahoric group scheme by Appendix \ref{levilemapp}, Remark \ref{levirem}. Let $\Gr_\calT$ be the BD-Grassmannian over $\calO_F$. Every $\mu\in X_*(T)$ determines a unique point
\[\tilde{\mu}\colon  \bar{S}\;\longto\; \Gr_\fraka.\]
Indeed, we have $X_*(T)=\Gr_\calT(\sF)=\Gr_\calT(\calO_\sF)$ by the ind-properness, and the inclusion $\calT\subset \calG$ gives a closed immersion $\Gr_\calT\hookto \Gr_\fraka$. 

\begin{dfn} 
Let $\mu \in X_*(T)$. The \emph{global Schubert variety $M_\mu$} is the scheme theoretic closure of the $\calL^+\calG_{\bar{S}}$-orbit of $\tilde{\mu}$ in the BD-Grassmannian $\Gr_{\fraka,\bar{S}}$. 
 \end{dfn}
 
Let us justify the definition. The morphism $\calL^+\calG_{\bar{S}}\to \Gr_{\fraka,\bar{S}}$, $g\mapsto g\tilde{\mu}$ factors by Lemma \ref{actlem} through some smooth affine quotient of $\calL^+\calG_{\bar{S}}$. Then $M_\mu$ is the scheme theoretic closure of this morphism, and hence a reduced flat projective $\bar{S}$-scheme, cf. Theorem \ref{indproj}. Since the fibers of $\calL^+\calG_{\bar{S}}$ are connected, $M_\mu$ has connected and equidimensional fibers. 

\begin{rmk}
Note that the global Schubert variety $M_\mu$ only depends on the $G(\bar{F})$-conjugacy class of $\mu$. In particular, $M_\mu$ is defined over $\calO_E$ where $E$ is the Shimura field of $\mu$, i.e. the finite extension of $F$ defined by the stabilizer in the Galois group of the $G(\sF)$-conjugacy class of $\mu$.
\end{rmk}

In general, the special fiber of $M_\mu$ is not irreducible. It is related to the $\mu$-admissible set (cf. Pappas-Rapoport-Smithling \cite[\S 4.3]{PRS}) as follows, see \eqref{muadmset} below.

For the rest of this section, we assume that $k$ is algebraically closed, i.e. $F=\nF$ and $A=\breve{A}$. Let $B$ be a Borel subgroup containing $A$. Let $R=R(G,A)$ be the set of relative roots, and $R^+=R(B,A)$ the subset of positive roots. Let $W=W(G,A)$ be the Iwahori-Weyl group, cf. \S 1.1, and let $W_0=W_0(G,A)$ be the finite Weyl group. There is a short exact sequence
\[1\longto \La_T\longto W\overset{\pi}{\longto}W_0\longto 1,\] 
where $\pi\colon  W\to W_0$ is the canonical projection and $\La_T=\Gr_\calT(k)$, cf. \cite{HR}. Let $W_\fraka\subset W$ be the subgroup associated with $\fraka$, cf. \S 1.1. Then $\pi|_{W_\fraka}\colon  W_\fraka\to W_0$ is injective, and hence, identifies $W_\fraka$ with a reflection subgroup $W_{0,\fraka}=\pi(W_\fraka)$ of $W_0$. Consider
\[R_\fraka\defined\{\al\in R\;|\; s_\al\in W_{0,\fraka}\},\] 
where $s_\al\in W_0$ denotes the reflection associated with the root $\al$. Then $R_\fraka$ is a root subsystem of $R$, and $R_\fraka^+=R_\fraka\cap R^+$ is a system of positive roots in $R_\fraka$. 

\begin{rmk}
Note that in general $W_{0,\fraka}$ is not a parabolic subgroup of $W_0$, i.e. the root subsystem $R_\fraka\subset R$ is not the system associated with a standard Levi in $G$. In fact, let $W_0'$ be a proper maximal reflection subgroup of $W_0$, i.e. $W_0'$ is a proper maximal subgroup, and is generated by the elements $w\in W_0'$ with $w^2=1$. If $R$ is simple, then there exists a facet $\fraka\subset \scrA$ such that $W_{0,\fraka}=W_0'$, cf. \cite[\S 2, Corollary 1]{DyL}. In particular, all proper maximal root subsystems of $R$ are of the form $R_\fraka$ for some facet $\fraka$.
\end{rmk}

The Kottwitz morphism gives an isomorphism $\La_T\simeq X_*(T)_I$. There is a natural map $X_*(T)_I\to X_*(T)_I\otimes \bbR\simeq X_*(A)_\bbR$, $\bar{\mu}\mapsto\bar{\mu}_\bbR$. Define
\[X_*(T)_I^{\fraka\text{-dom}}\defined\{\bar{\mu}\in X_*(T)_I\;|\;\lan\bar{\mu}_\bbR,\al\ran\geq 0\; \forall \al \in R_\fraka^+ \},\]
where $\lan\str,\str\ran\colon  X_*(A)_\bbR\times X^*(A)_\bbR\to \bbR$ denotes the canonical pairing. Then the canonical map $X_*(T)_I^{\fraka\text{-dom}}\to W_{0,\fraka}\bslash X_*(T)_I$ is bijective. For an element $\bar{\mu}\in X_*(T)_I$, we denote by $\bar{\mu}^{\fraka\text{-dom}}$  the unique representative of $W_{0,\fraka}\cdot\bar{\mu}$ in $X_*(T)_I^{\fraka\text{-dom}}$.

\begin{rmk}
At the one extreme, if $\fraka$ is a special facet, then $W_{0,\fraka}=W_0$, and $X_*(T)_I^{\fraka\text{-dom}}$ is the image of the $B$-dominant elements in $X_*(T)$ under the canonical projection $X_*(T)\to X_*(T)_I$. At the other extreme, if $\fraka$ is an alcove, then $W_{0,\fraka}$ is trivial, and $X_*(T)_I^{\fraka\text{-dom}}=X_*(T)_I$.  
\end{rmk} 

Now fix an alcove $\fraka_C$ which contains $\fraka$ in its closure, and fix a special vertex $\fraka_0$ in the closure of $\fraka_C$. By the choice of $\fraka_0$, we may identify $X_*(A)_\bbR$ with the apartment $\scrA=\scrA(G,A,F)$. Assume that the chamber in $X_*(A)_\bbR$ defined by $B$ is opposite to the chamber which contains $\fraka_C$. This can be arranged by possibly changing $B$. The choice is due to a sign convention in the Kottwitz morphism: The action of $t^\mu\in\La_T$ on $X_*(A)_\bbR$ is given by $v\mapsto v + \mu_\bbR$.

\begin{lem}\label{transmax}
The set ${_\fraka W}^\fraka\cap(W_\fraka \La_T W_\fraka)$ is contained in $\La_T$, and is identified via the Kottwitz morphism with $X_*(T)_I^{\fraka\text{-dom}}$.
\end{lem}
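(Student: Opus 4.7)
The plan is to first decompose $W_\fraka\La_T W_\fraka$ explicitly, then parametrize its double cosets modulo $W_\fraka$, and finally identify the unique element of ${_\fraka W}^\fraka$ in each such double coset as a pure translation. Since $\La_T=\ker(\pi\colon W\to W_0)$ is normal and any translation fixing $\fraka$ must be trivial, we have $W_\fraka\La_T W_\fraka=\La_T\cdot W_\fraka\cong\La_T\rtimes W_{0,\fraka}$ via $\pi|_{W_\fraka}$, with every element uniquely of the form $t^\la\tilde u$ for $\la\in\La_T$ and $\tilde u\in W_\fraka$. A direct computation in the semidirect product yields $\tilde u_1 t^\mu\tilde u_2=t^{u_1(\mu)}\widetilde{u_1u_2}$, so the double coset $W_\fraka t^\mu W_\fraka$ equals $\{t^\nu\tilde v:\nu\in W_{0,\fraka}\cdot\mu,\, v\in W_{0,\fraka}\}$, and its left cosets $t^\nu W_\fraka$ are distinct and indexed by $\nu\in W_{0,\fraka}\cdot\mu$. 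Thus the double cosets in $W_\fraka\La_T W_\fraka$ correspond bijectively to $W_{0,\fraka}\bslash\La_T$, hence to $X_*(T)_I^{\fraka\text{-dom}}$ via the Kottwitz identification $\La_T\simeq X_*(T)_I$ and the choice of unique $\fraka$-dominant orbit representative.

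The core step is to show that, for each $\mu\in X_*(T)_I^{\fraka\text{-dom}}$, the unique element of ${_\fraka W}^\fraka$ in the double coset $W_\fraka t^\mu W_\fraka$ is $t^\mu$ itself. Since ${_\fraka W}^\fraka$ consists of the maximum-length minimal-left-coset representatives (one per double coset), it suffices to verify $(a)$ $t^\mu\in W^\fraka$, and $(b)$ $l(w_\nu)\le l(t^\mu)$ for every other minimum-length representative $w_\nu$ of a left coset $t^\nu W_\fraka$ in this double coset, with equality only for $\nu=\mu$.

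For $(a)$, the simple reflections of the Coxeter group $W_\fraka$ correspond to walls of $\fraka_C$ passing through $\fraka$, namely $H_{\al_i,k_i}$ with $\al_i\in R_\fraka^+$ simple in $R_\fraka$. As $\fraka_C$ lies in the anti-dominant chamber, the associated positive affine root is $\al_{s_i}\colon v\mapsto k_i-\lan v,\al_i\ran$. Under $t^\mu$ this becomes $v\mapsto k_i+\lan\mu_\bbR,\al_i\ran-\lan v,\al_i\ran$, still positive on $\fraka_C$ since $\lan\mu_\bbR,\al_i\ran\ge 0$ for $\al_i\in R_\fraka^+$ and $\lan v,\al_i\ran<k_i$ for $v\in\fraka_C$. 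The standard Coxeter criterion then gives $l(t^\mu s_i)>l(t^\mu)$, so $t^\mu\in W^\fraka$. Reversing the inequalities, one also sees conversely that $t^\nu\in W^\fraka$ if and only if $\nu$ is $\fraka$-dominant.

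For $(b)$, invoke the Iwahori--Matsumoto length formula $l(t^\la)=\sum_{\al\in R^+}|\lan\la_\bbR,\al\ran|$ for translations in our extended affine Weyl group with base alcove $\fraka_C$. For any $w\in W_0$ the map $\al\mapsto|w^{-1}(\al)|$ is an elementary bijection of $R^+$ with itself, yielding $l(t^\nu)=l(t^\mu)$ for all $\nu\in W_{0,\fraka}\cdot\mu$. Hence $l(w_\nu)\le l(t^\nu)=l(t^\mu)$, with equality exactly when $t^\nu$ is itself a minimal left coset representative, i.e.\ $\nu=\mu$ by $(a)$. This pins down the unique element of ${_\fraka W}^\fraka$ in each double coset as $t^\mu$, so ${_\fraka W}^\fraka\cap(W_\fraka\La_T W_\fraka)=\{t^\mu:\mu\in X_*(T)_I^{\fraka\text{-dom}}\}\subset\La_T$, identified with $X_*(T)_I^{\fraka\text{-dom}}$ via the Kottwitz morphism. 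The principal technical obstacle is $(a)$, which requires careful tracking of how a translation acts on the positive affine roots bounding $\fraka_C$; the remainder, including the $W_0$-invariance of translation length used in $(b)$, is essentially a direct computation.
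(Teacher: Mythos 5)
Your proposal takes essentially the same route as the paper, but makes the key length computation self-contained rather than outsourcing it. The paper simply cites the formula
$l\bigl((t^{\bar\mu})^\fraka\bigr)=l(t^{\bar\mu})-\#\{\al\in R_\fraka^+ : \lan\bar\mu,\al\ran<0\}$
from [Ri2, Lemma~1.7], observes that the correction term vanishes precisely for $\bar\mu$ $\fraka$-dominant while $l(t^{\bar\mu})$ is constant on the $W_{0,\fraka}$-orbit, and concludes by uniqueness of ${_\fraka w}^\fraka$. You derive the same two facts from scratch: step (a) (positivity of $t^\mu(\al_{s_i})$ for $\fraka$-dominant $\mu$) is exactly the content of the cited formula, and step (b) (the Iwahori--Matsumoto formula and its $W_0$-invariance) is the constancy of translation lengths on Weyl orbits. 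Your up-front reduction $W_\fraka\La_T W_\fraka=\La_T W_\fraka$ and the double-coset parametrization by $W_{0,\fraka}\bslash\La_T$ is a nice clean preamble that the paper leaves implicit. So this is a genuinely self-contained rederivation of the paper's argument rather than a different one; what it buys is independence from [Ri2], at the cost of having to track the affine-root sign conventions by hand.

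One point to be careful about in step (a): the claim that each simple affine root of $W_\fraka$ has the shape $\al_{s_i}\colon v\mapsto k_i-\lan v,\al_i\ran$ with $\al_i\in R_\fraka^+$ (linear part in $-R^+$) does not follow from ``$\fraka_C$ lies in the anti-dominant chamber'' alone. The linear part of the positive affine root at a wall through $\fraka$ depends on which side of that wall $\fraka_C$ sits, and for walls of $\bar\fraka_C$ that do not pass through the chosen special origin $\fraka_0$ the linear part can land in $R^+$ instead of $-R^+$ (already visible for $SL_2$ with $\fraka$ the vertex of $\fraka_C$ opposite to $\fraka_0$). Your argument, and indeed the consistency of the statement with the paper's normalizations, requires the positive affine roots vanishing on $\fraka$ to have linear parts in $-R_\fraka^+$; this is the ``elementwise proportional'' compatibility the paper alludes to, and its footnote about a sign discrepancy with [Ri2] signals that this is a genuine convention to pin down. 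You should either make that compatibility an explicit hypothesis on the choice of $(B,\fraka_0,\fraka_C)$ and verify it can be arranged, or phrase step (a) entirely in terms of the simple affine roots of the Coxeter system $(W_{\mathrm{aff}},S_{\fraka_C})$ that vanish on $\fraka$, avoiding the detour through $R_\fraka^+\subset R^+$ until the very end when you compare with the definition of $X_*(T)_I^{\fraka\text{-dom}}$. With that caveat addressed, the argument is complete and matches the paper's.
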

\begin{proof}
Let ${\bar{\mu}}\in X_*(T)_I$. From \cite[Lemma 1.7, Equation (1.10)]{Ri1} one deduces the formula\footnote{Note that the normalization of the Kottwitz morphism in \cite{Ri1} differs by a sign!}
\[l((t^{\bar{\mu}})^\fraka)\;=\; l(t^{\bar{\mu}}) - |\{\al\in R_\fraka^+\;|\; \lan{\bar{\mu}},\al\ran < 0\}|.\]
Note that the root systems $R_\fraka$ and the subsystem of all affine roots $\al$ vanishing on $\fraka$ are elementwise proportional. Hence, $l((t^{\bar{\mu}})^\fraka)$ is maximal if and only if ${\bar{\mu}}={\bar{\mu}}^{\fraka\text{-dom}}$. The uniqueness of the element ${_\fraka(t^{\bar{\mu}})}^\fraka$ implies that
\begin{equation}\label{frakadom}
{_\fraka(t^{\bar{\mu}})}^\fraka\;=\; t^{{\bar{\mu}}^{\fraka\text{-dom}}}
\end{equation}
because both are contained in $(W_\fraka t^{\bar{\mu}} W_\fraka)\cap W^\fraka$, and have the same length. This shows the lemma.
\end{proof}

For $\bar{\mu}\in X_*(T)_I$, and $\rho\in X^*(T)^I$, define the integer
\[\lan\bar{\mu},\rho\ran \defined \lan\mu,\rho\ran,\]
where $\mu$ is a representative of $\bar{\mu}$ in $X_*(T)$. Note that the number $\lan\bar{\mu},\rho\ran\in \bbZ$ does not depend on the choice of $\mu$ by the Galois equivariance of $\lan\str,\str\ran\colon X_*(T)\times X^*(T)\to \bbZ$. For $\bar{\mu}\in X_*(T)_I$, we consider $\bar{\mu}^{\dom}=\bar{\mu}^{\fraka_0\text{-dom}}$.

\begin{cor}\label{lengthcor}
Let $\bar{{{\mu}}}\in X_*(T)_I$, and denote by $t^{\bar{\mu}}$ the associated translation element in $W$. Then
\[l({_\fraka (t^{\bar{\mu}})}^\fraka)\;=\;\lan {\bar{\mu}}^{\dom},2\rho_B\ran,\]
where $2\rho_B$ denotes the sum of the positive absolute roots of $B_\sF$ with respect to $T_\sF$.
\end{cor}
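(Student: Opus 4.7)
The plan is to combine equation \eqref{frakadom} with the standard length formula for translation elements in the Iwahori-Weyl group, in three short steps.

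First, I would invoke \eqref{frakadom}, which identifies ${_\fraka(t^{\bar{\mu}})}^\fraka$ with the translation element $t^{\bar{\mu}^{\fraka\text{-dom}}}$, so the task reduces to computing $l(t^{\bar{\mu}^{\fraka\text{-dom}}})$. Next, since $\bar{\mu}^{\fraka\text{-dom}} \in W_{0,\fraka}\cdot \bar{\mu}$, it is in particular a $W_0$-translate of $\bar{\mu}^{\dom} = \bar{\mu}^{\fraka_0\text{-dom}}$. The length formula used in the proof of Lemma \ref{transmax},
\[
l(t^{\bar{\nu}}) \;=\; \sum_{\al \in R^+}|\lan\bar{\nu},\al\ran|,
\]
depends only on the $W_0$-orbit of $\bar{\nu}$, because the multiset $\{|\lan\bar{\nu},\al\ran|\}_{\al\in R^+}$ is merely permuted when $\bar{\nu}$ is replaced by a $W_0$-translate. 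Hence $l(t^{\bar{\mu}^{\fraka\text{-dom}}}) = l(t^{\bar{\mu}^{\dom}})$.

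Finally, I would evaluate the displayed formula at the $B$-dominant element $\bar{\mu}^{\dom}$. The sign convention for the Kottwitz morphism set just above Lemma \ref{transmax}, together with the choice of $B$ whose chamber is opposite to the chamber containing $\fraka_C$, guarantees that every pairing $\lan\bar{\mu}^{\dom},\al\ran$ with $\al\in R^+$ is nonnegative, so the absolute values disappear and
\[
l(t^{\bar{\mu}^{\dom}}) \;=\; \sum_{\al\in R^+}\lan \bar{\mu}^{\dom},\al\ran.
\]
It then remains to recognize this sum as $\lan \bar{\mu}^{\dom},2\rho_B\ran$, which is precisely the identification between the pairing $X_*(T)_I\times X^*(T)^I\to\bbZ$ introduced above the corollary and the multiplicity-weighted sum over relative positive roots: the restriction of the $I$-invariant weight $2\rho_B$ to $X^*(T)^I$ equals the sum of the relative positive roots counted with the cardinalities of the corresponding Galois orbits, which is the weighting implicit in the length formula cited from \cite[Lemma 1.7]{Ri2}. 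Chaining the three equalities yields the claim.

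I do not foresee a real obstacle; the main care goes into matching conventions, both the Kottwitz sign (already fixed) and the multiplicity convention when passing between absolute and relative roots.
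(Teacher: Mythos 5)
Your first step — invoking \eqref{frakadom} to reduce to $l(t^{\bar{\mu}^{\fraka\text{-dom}}})$ — and your $W_0$-invariance observation are both correct and coincide with the reduction the paper makes (the paper's proof rewrites ${_\fraka(t^{\bar{\mu}})}^\fraka = t^{\bar{\lambda}}$ with $\bar{\lambda}\in W_0\cdot\bar{\mu}$). But your third step contains a genuine gap. You need the identity
\[
l(t^{\bar{\nu}}) \;=\; \lan \bar{\nu},2\rho_B\ran
\]
for dominant $\bar{\nu}\in X_*(T)_I$, where $2\rho_B$ is the sum of \emph{absolute} positive roots, and you dismiss this as "matching multiplicity conventions." It is not a convention issue: the length $l(t^{\bar{\nu}})$ counts affine hyperplanes in the Bruhat--Tits apartment, indexed by the \'echelonnage root system, and the identity requires that for each relative (resp.\ \'echelonnage) positive root $\al$ the relevant step size or multiplicity in the affine root system equals the number of absolute roots of $G_\sF$ whose restriction to $A$ is (proportional to) $\al$. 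That equality is a theorem, not a tautology — it depends on how ramification enters the affine root groups, and is especially delicate for non-reduced \'echelonnage systems (type $BC$, multipliable roots in ramified unitary groups) and in the wildly ramified case that this paper explicitly includes. This is precisely what the paper outsources to \cite[Lemma 9.1]{Z}, and what your argument never establishes.

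Concretely: your cited formula $l(t^{\bar{\nu}})=\sum_{\al\in R^+}|\lan\bar{\nu},\al\ran|$, taken over relative roots $R^+=R(G,A)$ without weights, is simply false in general (already for $\Res_{E/F}\GL_1$ with $E/F$ ramified of degree $d$ it is off by a factor $d$). The correct formula carries \'echelonnage multiplicities, and identifying those multiplicities with absolute-root counts is the substance of \cite[Lemma 9.1]{Z}. So either cite that lemma (as the paper does) or carry out the multiplicity verification — but as written the proposal asserts rather than proves the key step.
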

\begin{proof}
By \eqref{frakadom}, we have ${_\fraka (t^{\bar{\mu}})}^\fraka=t^{\bar{\la}}$ with $\bar{\la}\in W_0\cdot {\bar{\mu}}$. The corollary follows from \cite[Lemma 9.1]{Z}.
\end{proof}

Let us recall the definition of the $\mu$-admissible set $\Adm_{\mu}$ for $\mu\in X_*(T)$, cf. \cite[\S 4.3]{PRS}. Let $W_{0}^{\text{abs}}=W_0(G_\sF,B_\sF)$ be the absolute Weyl group. For $\mu\in X_*(T)$ denote by $\tilde{\La}_\mu$ the set of elements $\la\in W_0^{\text{abs}}\cdot \mu$ such that $\la$ is dominant with respect to some $F$-rational Borel subgroup of $G$ containing $T$. Let $\La_\mu$ be the image of $\tilde{\La}_\mu$ under the canonical projection $X_*(T)\to X_*(T)_I$. For $\mu\in X_*(T)$, the \emph{$\mu$-admissible set $\Adm_{\mu}$} is the partially ordered subset of the Iwahori-Weyl group
\begin{equation}\label{muadmset}
\Adm_\mu\defined \{w\in W\;|\;\exists \bar{\la} \in \La_\mu: \;w\leq t^{\bar{\la}}\},
\end{equation}
where $\leq$ is the Bruhat order of $W$. Note that the set $\La_\mu$, and hence $\Adm_\mu$ only depends on the Weyl orbit $W_0^{\text{abs}}\cdot\mu$. Moreover, if $\mu$ is dominant with respect to some $F$-rational Borel subgroup containing $T$, then $\La_\mu=W_0\cdot \bar{\mu}$ where $\bar{\mu}\in X_*(T)_I$ is the image under the canonical projection. 

We define the $\mu$-admissible set $\Adm_\mu^\fraka$ relative to $\fraka$ as the subset of $W$
\[\Adm_\mu^\fraka\defined {_\fraka W}^\fraka\cap (W_\fraka\Adm_\mu W_\fraka).\]
Note that if $\fraka=\fraka_C$ is an alcove, then $\Adm_\mu=\Adm_\mu^\fraka$.

\begin{cor}\label{maxadmcor}
Let $\mu\in X_*(T)$ be $B$-dominant, and denote by $\bar{\mu}$ the image in $X_*(T)_I$. Then the maximal elements in $\Adm_\mu^\fraka$ (wrt $\leq$) are the elements $(W_0\cdot \bar{\mu})^{\fraka\text{-dom}}$. In particular, each maximal element has length $\lan\mu,2\rho_B\ran$, and their number is 
\[|W_{0,\fraka}\bslash W_0/W_{0,\bar{\mu}}|,\] 
where $W_{0,\bar{\mu}}$ is the stabilizer of $\bar{\mu}$ in $W_0$.
\end{cor}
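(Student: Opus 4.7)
The plan is to identify the maximal elements of $\Adm_\mu^\fraka$ with the translations $t^{\bar\la^{\fraka\text{-dom}}}$ as $\bar\la$ ranges over $W_0\cdot\bar\mu$, and then to read off the length and the count from equation \eqref{frakadom}, Lemma \ref{transmax}, and Corollary \ref{lengthcor}.

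First, since $\mu$ is $B$-dominant, $\La_\mu = W_0\cdot\bar\mu$ by the definition recalled after \eqref{muadmset}, and the maximal elements of $\Adm_\mu$ are $\{t^{\bar\la}\ :\ \bar\la\in W_0\cdot\bar\mu\}$. The key step is the compatibility
\[
v \leq v' \ \Longrightarrow\ {_\fraka v}^\fraka \leq {_\fraka v'}^\fraka
\]
of the Bruhat order with passage to the maximal double-coset representatives. For a Coxeter group this is the standard lifting/exchange consequence; for the quasi-Coxeter group $W$ it follows by pulling back along $W\simeq W_{\aff}\rtimes \Om$, since the length-zero part $\Om$ preserves both $\leq$ and $W_\fraka$. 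Applied to $v \leq t^{\bar\la}$ with $v\in\Adm_\mu$ and $\bar\la\in W_0\cdot\bar\mu$, this yields ${_\fraka v}^\fraka \leq {_\fraka(t^{\bar\la})}^\fraka$. Any $w\in \Adm_\mu^\fraka$ is the unique ${_\fraka W}^\fraka$-representative of its $W_\fraka$-double coset, so $w={_\fraka v}^\fraka$ for some such $v$, and hence $w\leq {_\fraka(t^{\bar\la})}^\fraka$.

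Next I apply equation \eqref{frakadom} to rewrite ${_\fraka(t^{\bar\la})}^\fraka = t^{\bar\la^{\fraka\text{-dom}}}$, and Lemma \ref{transmax} to confirm that these translations lie in ${_\fraka W}^\fraka\cap(W_\fraka\La_T W_\fraka)\subseteq \Adm_\mu^\fraka$. So the $t^{\bar\la^{\fraka\text{-dom}}}$ are in the set and dominate everything in it. Corollary \ref{lengthcor} then gives
\[
l(t^{\bar\la^{\fraka\text{-dom}}}) = l({_\fraka(t^{\bar\la})}^\fraka) = \lan\bar\la^{\dom},2\rho_B\ran = \lan\bar\mu,2\rho_B\ran = \lan\mu,2\rho_B\ran,
\]
where I use $\bar\la\in W_0\cdot\bar\mu$ for the third equality and the $\fraka_0$-dominance of $\bar\mu$ itself for the fourth (which holds because $\mu$ is $B$-dominant and the Borel $B$ is opposite to the chamber containing $\fraka_C$, per the convention fixed before Lemma \ref{transmax}). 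Since all maximal candidates share this common length, any two that are Bruhat-comparable are equal, so each is in fact maximal. Finally, the map $\bar\la\mapsto\bar\la^{\fraka\text{-dom}}$ identifies the set of maximal elements with the $W_{0,\fraka}$-orbits on $W_0\cdot\bar\mu\simeq W_0/W_{0,\bar\mu}$, giving the count $|W_{0,\fraka}\bslash W_0/W_{0,\bar\mu}|$.

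The main obstacle is the Bruhat-order compatibility $v\leq v'\Rightarrow {_\fraka v}^\fraka\leq {_\fraka v'}^\fraka$ in $W$; once that quasi-Coxeter-theoretic statement is granted, the remainder is a bookkeeping exercise combining \eqref{frakadom}, Lemma \ref{transmax}, and Corollary \ref{lengthcor}.
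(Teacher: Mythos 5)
Your proof follows essentially the same route as the paper: identify the maximal elements of $\Adm_\mu^\fraka$ with the double-coset representatives ${_\fraka(t^{\bar\la})}^\fraka$, use \eqref{frakadom}/Lemma~\ref{transmax} to rewrite these as $t^{\bar\la^{\fraka\text{-dom}}}$, and extract the length and the count from Corollary~\ref{lengthcor}. The paper compresses the first step to a single ``hence''; you correctly make explicit what that ``hence'' tacitly uses, namely the Bruhat-order compatibility $v\leq v' \Rightarrow {_\fraka v}^\fraka \leq {_\fraka v'}^\fraka$, and then run the incomparability argument via the common length. So the substance is the same and correct.

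One warning about your justification of that key monotonicity fact: it is \emph{not} true in general that the length-zero subgroup $\Om$ preserves $W_\fraka$. Writing $W=W_{\aff}\rtimes\Om$, an element $\tau\in\Om$ permutes the facets in the closure of $\fraka_C$ and conjugates $W_\fraka$ to $W_{\tau\fraka}$, which need not equal $W_\fraka$. Consequently, reducing the statement to a pure $W_{\aff}$-statement for a single parabolic is not as immediate as you indicate: for $w=u\tau$ with $u\in W_{\aff}$ one lands on the operator $u\mapsto \max(W_\fraka\, u\, W_{\tau\fraka} \cap W^{\tau\fraka})$, with \emph{different} parabolics on the two sides. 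The monotonicity is still true, but either one has to do the two-parabolic Coxeter combinatorics carefully, or (cleaner, and in the spirit of this paper) argue geometrically: the cells $\mathring Y_w$ for $w\in{_\fraka W}^\fraka$ realize the Bruhat order as closure order by Proposition~\ref{schubertstr}~ii), the point $n_v\cdot e_0$ lies in $\mathring Y_{{_\fraka v}^\fraka}$ by the Bruhat decomposition, and the proper surjection $\Fl_{\fraka_C}\to\Fl_\fraka$ sends the Iwahori--Schubert variety through $n_{v'}$ into $Y_{{_\fraka v'}^\fraka}$; applying this to $\mathring Y^{(C)}_v\subset Y^{(C)}_{v'}$ gives ${_\fraka v}^\fraka\leq{_\fraka v'}^\fraka$. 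With that repair the rest of your argument is fine, and your count via $\bar\la\mapsto\bar\la^{\fraka\text{-dom}}$ inducing the bijection $(W_0\cdot\bar\mu)^{\fraka\text{-dom}}\simeq W_{0,\fraka}\bslash W_0/W_{0,\bar\mu}$ is exactly what the corollary asserts.
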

\begin{proof}
The maximal elements in $\Adm_\mu$ are $t^{\bar{\la}}$ where $\bar{\la}\in W_0\cdot\bar{\mu}$. Hence, the maximal elements in $\Adm_\mu^\fraka$ are ${_\fraka (t^{\bar{\la}})}^\fraka$ for $\bar{\la}\in W_0\cdot \bar{\mu}$. By Lemma \ref{transmax}, we have 
\[{_\fraka (t^{\bar{\la}})}^\fraka\;=\;t^{{\bar{\la}}^{\fraka\text{-dom}}},\]
which implies the corollary using Corollary \ref{lengthcor}.
\end{proof}

The combinatorial discussion above allows us to study the irreducible components of the special fiber of $M_\mu$. In fact, the inclusion in Lemma \ref{increlation} below is an equality on reduced loci, cf. \cite{Z} for tamely ramified groups, and the forthcoming manuscript \cite{Ri3} for the general case. Note that this implies Conjecture 4.3.1 of \cite{PRS}, cf. Remark \ref{conjec} below.

\begin{lem}\label{increlation}
Let $\mu\in X_*(T)$ be $B$-dominant, and denote by $\bar{\mu}$ the image in $X_*(T)_I$. The special fiber $M_{\mu,s}$ contains the union of Schubert varieties
\[\bigcup_{w\in\Adm_\mu^\fraka}Y_w.\]
The $Y_{w}$ of maximal dimension, for $w\in\Adm_\mu^\fraka$, are precisely the $Y_{t^{\bar{\la}}}$ with ${\bar{\la}}\in (W_0\cdot \bar{\mu})^{\fraka\text{-dom}}$. Each of them is an irreducible component of $M_{\mu,s}$ of dimension $\lan\mu,2\rho_B\ran$.
\end{lem}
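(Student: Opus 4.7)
My plan is to exploit the flatness of $M_\mu \to \bar{S}$ together with the specialization map of Lemma \ref{specitorus} on the torus piece $\Gr_\calT \hookrightarrow \Gr_{\fraka,\bar{S}}$. For every $\bar{\la} \in W_0 \cdot \bar{\mu}$ I will fix a lift $\la \in \tilde{\La}_\mu$, meaning $\la \in W_0^{\text{abs}} \cdot \mu$ dominant with respect to some $F$-rational Borel containing $T$. The cocharacter $\la \in X_*(T)$ determines a section $\tilde{\la} \colon \bar{S} \to \Gr_\calT \hookrightarrow \Gr_{\fraka,\bar{S}}$.

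The first step is to show $Y_{t^{\bar{\la}^{\fraka\text{-dom}}}} \subseteq M_{\mu,s}$ for every such $\bar{\la}$. The geometric generic fiber $\tilde{\la}_{\oeta} = t^\la \cdot e_0 \in \Gr_{G,\sF}$ lies in the $L_z^+ G_\sF$-orbit of $t^\mu$, since the normalizer of $T_\sF$ realizes the absolute Weyl group inside $L_z^+ G_\sF$, and hence lies in $M_{\mu,\oeta}$. Because $M_\mu$ is closed in $\Gr_{\fraka,\bar{S}}$ and $\bar{S}$ is the spectrum of a discrete valuation ring, the section $\tilde{\la}$ factors through $M_\mu$; by Lemma \ref{specitorus} its special fiber is $t^{\bar{\la}} \cdot e_0 \in \Fl_\fraka$. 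The $\calL^+\calG_{\bar{S}}$-invariance of $M_\mu$, combined with Lemma \ref{actlem} and passage to the reduced closure, then gives $Y_{{_\fraka(t^{\bar{\la}})}^\fraka} \subseteq M_{\mu,s}$, which by \eqref{frakadom} is $Y_{t^{\bar{\la}^{\fraka\text{-dom}}}}$.

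To handle an arbitrary $w \in \Adm_\mu^\fraka$, I will use that $w = {_\fraka w}^\fraka$ lies in $W_\fraka v W_\fraka$ for some $v \in \Adm_\mu$ with $v \leq t^{\bar{\la}}$, $\bar{\la} \in W_0 \cdot \bar{\mu}$. Standard monotonicity of the Bruhat order under passage to maximal double-coset representatives then gives $w \leq {_\fraka(t^{\bar{\la}})}^\fraka = t^{\bar{\la}^{\fraka\text{-dom}}}$, and Proposition \ref{schubertstr} yields $Y_w \subseteq Y_{t^{\bar{\la}^{\fraka\text{-dom}}}} \subseteq M_{\mu,s}$, proving the first assertion.

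For the remaining assertions, Corollary \ref{maxadmcor} identifies the maximal-length elements of $\Adm_\mu^\fraka$ as precisely the $t^{\bar{\la}}$ with $\bar{\la} \in (W_0 \cdot \bar{\mu})^{\fraka\text{-dom}}$, each of length $\lan \mu, 2\rho_B \ran$, so combined with Proposition \ref{schubertstr}(i) the maximal-dimensional $Y_w$ among $w \in \Adm_\mu^\fraka$ are precisely those claimed. Finally, $M_\mu \to \bar{S}$ is flat with equidimensional fibers and geometrically integral generic fiber isomorphic to the $L_z^+ G_\sF$-orbit closure of $t^\mu$ in $\Gr_{G,\sF}$, of dimension $\lan \mu, 2\rho_B \ran$; hence $\dim M_{\mu,s} = \lan \mu, 2\rho_B \ran$, and each irreducible $Y_{t^{\bar{\la}^{\fraka\text{-dom}}}}$ of this dimension inside $M_{\mu,s}$ is necessarily an irreducible component. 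I expect the main delicacy to be the Bruhat-order monotonicity used in the second step; everything else reduces to flatness together with Lemma \ref{specitorus}.
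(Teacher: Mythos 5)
Your proof follows essentially the same route as the paper: show that the generic fiber of $M_\mu$ picks up the $W_0^{\text{abs}}$-orbit of $t^\mu$, use the closedness of $M_\mu$ over the DVR $\bar S$ together with Lemma~\ref{specitorus} to specialize those torus sections to the points $t^{\bar\la}\cdot e_0$, invoke $L^+\calG$-invariance to sweep out Schubert varieties, and use equidimensionality of the flat fibers for the dimension and irreducible-component count. The argument is correct.

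One step deserves a correction: ``standard monotonicity of the Bruhat order under passage to maximal double-coset representatives'' is a mislabel. By the definition in \S1.1, ${_\fraka w}^\fraka$ is \emph{not} the maximal element of $W_\fraka w W_\fraka$; it is the element of maximal length among the minimal right-coset representatives $(w'ww'')^\fraka$, a specific ``middle'' representative of the double coset. The monotonicity you need (namely that ${_\fraka v}^\fraka \leq {_\fraka u}^\fraka$ whenever $v\leq u$) does hold, but you cannot cite it as the textbook statement about maximal double-coset representatives; it requires either a dedicated combinatorial argument or, better, a direct appeal to geometry (the Iwahori-Schubert cell of $v$ maps into $Y_{{_\fraka u}^\fraka}$ under $\Fl_{\fraka_C}\to\Fl_\fraka$). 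More efficiently, you can sidestep this entire step: Corollary~\ref{maxadmcor} already identifies the maximal elements of the finite poset $\Adm_\mu^\fraka$ as the $t^{\bar\la}$ with $\bar\la\in(W_0\cdot\bar\mu)^{\fraka\text{-dom}}$, so every $w\in\Adm_\mu^\fraka$ satisfies $w\leq t^{\bar\la^{\fraka\text{-dom}}}$ for some $\bar\la$, and Proposition~\ref{schubertstr}(ii) then gives $Y_w\subseteq Y_{t^{\bar\la^{\fraka\text{-dom}}}}$. Your second step is essentially re-deriving Corollary~\ref{maxadmcor}.
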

\begin{proof}
The geometric generic fiber $M_{\mu,\oeta}$ is the $L_z^+G_\sF$-orbit closure of $z^\mu\cdot e_0$ in $\Gr_{G,\sF}$, and hence contains the $\sF$-points $z^\la\cdot e_0$ with $\la\in W_0^{\text{abs}}\cdot \mu$. By Lemma \ref{specitorus}, the special fiber $M_{\mu,s}$ contains the $k$-points $t^{\bar{\la}}\cdot e_0$ for $\la\in W_0^{\text{abs}}\cdot \mu$, where $\bar{\la}$ denotes the image in $X_*(T)_I$. The relative Weyl group $W_0$ is contained in the subgroup of $I$-invariant elements in $W_0^{\text{abs}}$, and the canonical projection $X_*(T)\to X_*(T)_I$ is $W_0$-equivariant under this identification. This shows that $M_{\mu,s}$ contains the $k$-points $t^{\bar{\la}}\cdot e_0$ with $\bar{\la}\in W_0\cdot\bar{\mu}$. The $L^+\calG$-invariance of $M_{\mu,s}$ implies that $M_{\mu,s}$ contains the Schubert varieties $Y_{w}$ for $w\in \Adm^\fraka_\mu$. 

The rest of the lemma follows from Corollary \ref{maxadmcor} using that the fibers of $M_\mu$ are equidimensional, cf. Proposition \ref{schubertstr}. 
\end{proof}

\begin{rmk}\label{conjec}
Let us explain how the equality $M_{\mu,s}=\cup_{w\in\Adm_\mu^\fraka}Y_{w}$ on reduced loci implies Conjecture 4.3.1 of \cite{PRS}. Specialize to the case that $\fraka=\fraka_C$ is an alcove, and assume $\mu$ to be $B$-dominant. By the proof of Lemma \ref{increlation}, the special fiber $M_{\mu,s}$ contains all Schubert varieties $Y_{w}$ with $w\leq t^{\bar{\la}}$ for $\la\in W_0^{\text{abs}}\cdot\mu$, where $\bar{\la}$ denotes the image in $X_*(T)_I$. Hence,
\begin{equation}\label{conequal}
\Adm_\mu\;=\;\{w\in W\;|\;\exists \la \in W_0^{\text{abs}}\cdot\mu: \;w\leq t^{\bar{\la}}\}.
\end{equation}
Indeed, $\Adm_\mu$ is clearly contained in the right hand side of \eqref{conequal}, and thus \eqref{conequal} follows from $M_{\mu,s}=\cup_{w\in\Adm_\mu}Y_{w}$. Now Corollary \ref{maxadmcor} shows that the maximal elements in the image of $W_0^{\text{abs}}\cdot\mu$ in $X_*(T)_I$ are precisely the elements $W_0\cdot \bar{\mu}=\La_\mu$. This is Conjecture 4.3.1 of \cite{PRS}.  
\end{rmk}

For $\mu\in X_*(T)$ let $\tau_\mu\colon \calL^+\calG_{\bar{S}}\to M_\mu$, $g\mapsto g\tilde{\mu}$ be the orbit morphism, where $\tilde{\mu}\in M_\mu(\bar{S})$ as above. Let $\mathring{M}_\mu$ be the image of $\tau_\mu$ in the sense of fppf-sheaves. 

\begin{cor}\label{globorbit}
Let $\mu\in X_*(T)$ be dominant with respect to some $F$-rational Borel subgroup containing $T$. Then the fppf-sheaf $\mathring{M}_\mu$ is representable by a smooth open dense subscheme of $M_\mu$.
\end{cor}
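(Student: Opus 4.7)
The plan is to proceed in four steps: reduce to a morphism of finite-type $\bar{S}$-schemes, check openness of the image in each geometric fiber, promote this to openness in the total space $M_\mu$, and deduce density together with smoothness.

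First, by Lemma \ref{actlem} the orbit map $\tau_\mu \colon \calL^+\calG_{\bar{S}} \to M_\mu$ factors through a smooth affine quotient $\calG_{n, \bar{S}}$ of $\calL^+\calG_{\bar{S}}$ of finite type for some $n$, and the resulting morphism $\bar\tau_\mu \colon \calG_{n, \bar{S}} \to M_\mu$ has the same fppf-image as $\tau_\mu$. Identifying the fibers of $\Gr_\fraka$ via Lemma \ref{satcat1} and the point $\tilde\mu$ via Lemma \ref{specitorus} together with \eqref{frakadom}, the image of $\bar\tau_\mu$ in the geometric generic fiber is the open $L_z^+G_\sF$-orbit of $z^\mu e_0$ inside the Schubert variety $M_{\mu, \oeta} \subset \Gr_{G, \sF}$, while the image in the special fiber is the Schubert cell $\mathring{Y}_{w_\mu} \subset \Fl_\fraka$ with $w_\mu = t^{\bar\mu^{\fraka\text{-dom}}} \in {_\fraka W}^\fraka$.

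For fiberwise openness, the generic case is standard affine Grassmannian theory: the top Schubert stratum is open dense in its Schubert variety. For the special fiber, $M_{\mu, \os}^{\red}$ is a finite union of Schubert varieties in $\Fl_\fraka$ (being a closed $L^+\calG$-invariant subscheme of finite type) and is equidimensional of relative dimension $\lan \mu, 2\rho_B \ran$ by flatness of $M_\mu$ and the connectedness of the fibers of the $\calL^+\calG_{\bar{S}}$-orbit map. Corollary \ref{lengthcor} gives $\ell(w_\mu) = \lan \mu, 2\rho_B \ran$, so $w_\mu$ is maximal in the Bruhat order among those $v \in {_\fraka W}^\fraka$ with $\mathring{Y}_v \subseteq M_{\mu, \os}$. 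Using the closure relations of Proposition \ref{schubertstr}, this maximality implies that for every such $v \neq w_\mu$ the Schubert variety $Y_v$ avoids $\mathring{Y}_{w_\mu}$, whence the complement $M_{\mu, \os} \setminus \mathring{Y}_{w_\mu}$ equals the finite union of these $Y_v$'s and is closed in $M_{\mu, \os}$.

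To globalize, I analyze the complement $M_\mu \setminus \mathring{M}_\mu$ inside $M_\mu$. Its generic-fiber part is a finite union of Schubert subvarieties $M_{\la, \oeta}$ for cocharacters $\la \in X_*(T)$ strictly smaller than $\mu$ in the dominance order. Each global Schubert variety $M_\la$ is a closed $\calL^+\calG$-invariant subscheme of $M_\mu$, flat with equidimensional fibers of relative dimension $\lan \la, 2\rho_B \ran < \lan \mu, 2\rho_B \ran$. If $M_{\la, \os}$ were to meet $\mathring{Y}_{w_\mu}$, then by $L^+\calG$-invariance of $M_{\la, \os}$ one would have $\mathring{Y}_{w_\mu} \subseteq M_{\la, \os}$, violating the dimension bound; hence $M_{\la, \os} \cap \mathring{Y}_{w_\mu} = \varnothing$. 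Thus $M_\mu \setminus \mathring{M}_\mu$ is the finite union of the $M_\la$'s together with the closed special-fiber subscheme from the previous step, and is closed in $M_\mu$.

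Density follows from flatness of $M_\mu \to \bar{S}$ combined with density of $\mathring{M}_{\mu, \oeta}$ in $M_{\mu, \oeta}$. For smoothness over $\bar{S}$, openness in the flat $\bar{S}$-scheme $M_\mu$ yields flatness of $\mathring{M}_\mu \to \bar{S}$; both geometric fibers are smooth (the generic as a Schubert cell in a split affine Grassmannian, the special as $\mathring{Y}_{w_\mu}$, smooth by the discussion following Definition \ref{schubertdfn}), so $\mathring{M}_\mu$ is smooth over $\bar{S}$. The principal obstacle is the global openness step: it requires controlling how smaller-$\la$ strata degenerate in the special fiber, and the essential inputs are the $\calL^+\calG$-equivariance and flatness of the $M_\la$'s together with Corollary \ref{lengthcor}, which jointly prevent smaller orbit closures from containing the top special-fiber stratum $\mathring{Y}_{w_\mu}$.
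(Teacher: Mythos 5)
Your approach is genuinely different from the paper's, and the topological analysis you give is largely correct, but there is a real gap: you never establish that the fppf sheaf $\mathring{M}_\mu$ is representable by the open subscheme you identify, which is precisely the content of the statement.

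The paper's proof is much shorter and attacks the representability question head-on. It writes $\calL^+\calG\simeq\varprojlim_i\calG_i$, factors $\tau_\mu$ through some $\calG_i$, and studies the stabilizer $\calG_{i,\mu}$. It shows $\calG_{i,\mu}$ has smooth geometrically connected fibers of the same dimension, that the flat closure of its generic fiber must stabilize $\tilde\mu$ and so coincides with $\calG_{i,\mu}$ by dimension count, hence $\calG_{i,\mu}$ is flat with smooth fibers, hence smooth. Then Anantharaman's theorem gives that $\calG_i/\calG_{i,\mu}\simeq\mathring{M}_\mu$ is a \emph{smooth scheme}, and the resulting quasi-finite separated monomorphism $\mathring{M}_\mu\to M_\mu$ is an open immersion by Zariski's main theorem. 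All three conclusions --- representability, smoothness, openness --- come out at once.

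Your argument instead computes the set-theoretic image of $\tau_\mu$ fiberwise (correctly identifying the open orbit in $M_{\mu,\oeta}$ and the cell $\mathring{Y}_{w_\mu}$ in the special fiber, using Lemma \ref{specitorus} and \eqref{frakadom}), and then shows that the complement of this image in $M_\mu$ is closed by expressing it as a finite union of smaller global Schubert varieties $M_\la$ together with the special-fiber boundary, with the dimension bound from Corollary \ref{lengthcor} and $\calL^+\calG$-equivariance preventing the $M_{\la,\os}$ from meeting $\mathring{Y}_{w_\mu}$. This produces an open, dense, fiberwise smooth, flat open subscheme $U\subset M_\mu$ whose underlying point set agrees with the image of $\tau_\mu$. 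But the corollary asserts that the \emph{fppf sheaf} $\mathring{M}_\mu$ is representable by such an open subscheme. Knowing that $U$ and $\mathring{M}_\mu$ have the same geometric points does not by itself identify them as sheaves: one must show that $\calG_n\to U$ is fppf (equivalently, that the stabilizer $\calG_{n,\mu}$ is flat over $\bar{S}$). This is exactly where the paper spends its effort. Your proof could be repaired by adding a miracle-flatness argument (the source and target of $\calG_n\to U$ are smooth over a regular base after spreading out to a noetherian $\calO_E$, and the fiber dimension is constant, so the map is flat and therefore fppf-surjective onto $U$), but as written the representability step is missing.
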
 
\begin{proof}
Write $\calL^+\calG\simeq \varprojlim_i\calG_i$ as in \eqref{invlim}. The morphism $\tau_\mu$ factors over some $\calG_i$, and $\calG_i/\calG_{i,\mu}\simeq\mathring{M}_\mu$ where $\calG_{i,\mu}\subset \calG_i$ is the stabilizer of $\tilde{\mu}$. We claim that the generic fiber and the special fiber of $\calG_{i,\mu}$ are smooth and geometrically connected of the same dimension. Indeed, $\calG_{i,\mu,\bar{\eta}}$ (resp. $\calG_{i,\mu,s}$) is the $L^+_zG_{\sF}$- (resp. $L^+\calG$-) stabilizer of $z^\mu\cdot e_0$ in $\Gr_{G,\sF}$ (resp. of $t^{\bar{\mu}}\cdot e_0$ in $\Fl_\fraka$), and hence smooth and geometrically connected by Corollary \ref{flagindproper} ii). The quotient $\calG_{i,\bar{\eta}}/\calG_{i,\mu,\bar{\eta}}$ (resp. $\calG_{i,s}/\calG_{i,\mu,s}$) maps isomorphically onto the open cell in the $L^+_zG_{\sF}$- (resp. $L^+\calG$-) orbit closure of $z^\mu\cdot e_0$ in $\Gr_{G,\sF}$ (resp. of $t^{\bar{\mu}}\cdot e_0$ in $\Fl_\fraka$), and Lemma \ref{increlation} implies that $\dim(\calG_{i,\mu,\eta})=\dim(\calG_{i,\mu,s})$. This proves the claim. Now the flat closure of $\calG_{i,\mu,\eta}$ in $\calG_i$ stabilizes $\tilde{\mu}$, and hence, by counting dimensions, is equal to $\calG_{i,\mu}$. This shows that $\calG_{i,\mu}$ is flat and fiberwise smooth, and therefore smooth. Hence, the fppf-quotient $\calG_i/\calG_{i,\mu}$ is representable by a smooth scheme by the main result of \cite{SA}. This gives a quasi-finite separated monomorphism $\tau_\mu\colon \mathring{M}_\mu\to M_\mu$ which is open by Zariski's main theorem. The lemma follows.
\end{proof}

\section{Speciality, parity and monodromy}\label{monpara} 
In \S 3.1 and 3.2, we give a list of characterizations for a facet of being very special (cf. Definition \ref{veryspecialdfn}): geometric (cf. Theorem \ref{monodromy1}), combinatorial (cf. Corollary \ref{abfall} ii)) and arithmetic (cf. Proposition \ref{monodromy2}). This implies Theorem B of the introduction.

Let $F$ be an arbitrary field. Fix a prime $\ell$ different from the characteristic of $F$. Let $\algQl$ be an algebraic closure of the field of $\ell$-adic numbers. For a separated scheme $Y$ of finite type over $F$, we denote by $D_c^b(Y,\algQl)$ the bounded derived category of constructible $\algQl$-complexes. Let $P(Y)$ be the core of the perverse $t$-structure on $D_c^b(Y,\algQl)$ which is an abelian $\algQl$-linear full subcategory of $D_c^b(Y,\algQl)$. If $\calY$ is a ind-scheme separated of finite type over $F$, and $\calY=(Y_\ga)_{\ga\in J}$ an ind-presentation, then let 
\[D_c^b(\calY,\algQl)=\varinjlim_\ga D_c^b(Y_\ga,\algQl)\] 
be the direct limit. Moreover, if $\calY=(Y_\ga)_{\ga\in J}$ is a strict ind-presentation, then let $P(\calY)=\varinjlim_\ga P(Y_\ga)$ be the abelian $\algQl$-linear full subcategory of $D_c^b(\calY,\algQl)$ of perverse sheaves. 

Let $k$ be a either a finite or an algebraically closed field, and specialize to the case that $F=k\rpot{t}$. Let $G$ be a connected reductive group over $F$. Let $\fraka$ be a facet of the extended Bruhat-Tits building $\scrB(G,F)$, and let $\calG=\calG_\fraka$ be the corresponding parahoric group scheme over $\calO_F=k\pot{t}$. Let $\sF$ be the completion of a separable closure of $F$, and denote by $\Ga=\Gal(\sF/F)$ the absolute Galois group. Let $(S,\bar{S},\eta,\oeta,s,\os)$ be the $6$-tuple as above, cf. \S \ref{globSchub}. Let $\Gr_\fraka\to S$ be the BD-Grassmannian associated with the facet $\fraka$, cf. \S \ref{globSchub}. Then $\Gr_\fraka$ is an ind-projective strict ind-scheme, and there is the following cartesian diagram of ind-schemes
\[\begin{tikzpicture}
\matrix(a)[matrix of math nodes, 
row sep=1.5em, column sep=2em, 
text height=1.5ex, text depth=0.45ex] 
{ \Fl_{\fraka}& \Gr_{\fraka} & \Gr_{G}\\ 
s & S & \eta,\\}; 
\path[->](a-1-1) edge node[above] {$i$} (a-1-2) (a-1-3) edge node[above] {$j$} (a-1-2); 
\path[->](a-2-1) edge (a-2-2) (a-2-3) edge (a-2-2); 
\path[->](a-1-1) edge (a-2-1) (a-1-2) edge  (a-2-2) (a-1-3) edge (a-2-3); 
\end{tikzpicture}\]
cf. Corollary \ref{satcat1}. 

Let $\bjay\colon  \Gr_{G,\oeta}\to \Gr_{\fraka,\bar{S}}$ (resp. $\bio\colon  \Fl_{\fraka,\os}\to \Gr_{\fraka,\bar{S}}$) denote the base change of $j$ (resp. $i$). The \emph{functor of nearby cycles $\Psi_\fraka$ associated with $\fraka$} is
\[
\Psi_\fraka\colon D_c^b(\Gr_G,\algQl)\to D_c^b(\Fl_\fraka\times_s\eta,\algQl),\;\;\;\;\;\Psi_\fraka(\calA)=\bio^*\bjay_*(\calA_\oeta).
\] 
Here $D_c^b(\Fl_\fraka\times_s\eta,\algQl)$ denotes the bounded derived category of $\ell$-adic complexes on $\Fl_{\fraka,\os}$ together with a continuous $\Ga$-action compatible with the base $\Fl_{\fraka,\os}$, cf. \cite[\S 5]{GH} and the discussion in the beginning of \cite[\S 9]{PZ}. See [SGA7 II, Expos\'e XIII] for the construction of the Galois action on the nearby cycles. 

The global positive loop group $\calL^+\calG$ acts on $\Gr_\fraka$, and the action factors on each orbit through a smooth affine group scheme which is geometrically connected, cf. Lemma \ref{groupscheme} and \ref{actlem}. Choosing a $\calL^+\calG$-stable ind-presentation of $\Gr_\fraka$, this allows us to consider the category 
\[P_{L_z^+G}(\Gr_G) \;\;\;\;\;\text{(resp. $P_{L^+\calG}(\Fl_\fraka)$)}\]
of $L_z^+G$-equivariant (resp. $L^+\calG$-equivariant) perverse sheaves on $\Gr_G$ (resp. $\Fl_\fraka$) in the generic (resp. special) fiber of $\Gr_\fraka$. Let $P_{L^+\calG}(\Fl_\fraka\times_s\eta)$ be the category of $L^+\calG_\os$-equivariant perverse sheaves on $\Fl_{\fraka,\os}$ compatible with the Galois action, cf. \cite[Definition 9.3]{PZ}. 

\begin{lem}\label{nearlem}
The nearby cycles restrict to a functor $\Psi_\fraka\colon P_{L_z^+G}(\Gr_G)\to P_{L^+\calG}(\Fl_\fraka\times_s\eta)$.
\end{lem}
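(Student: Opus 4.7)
The plan is to verify two separate assertions: that $\Psi_\fraka$ sends perverse sheaves to perverse sheaves, and that it transports the $L_z^+G$-equivariance datum to a compatible $L^+\calG$-equivariance together with Galois compatibility. Both reduce, via a choice of $\calL^+\calG$-stable ind-presentation of $\Gr_\fraka$ by closed subschemes of finite type and flat over $\bar{S}$ (for instance, finite unions of the global Schubert varieties $M_\mu$, whose flatness is built into their definition as scheme-theoretic closures), to a single such closed subscheme $Y \subset \Gr_\fraka$.

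For perversity, I would invoke the theorem of Beilinson (see also Illusie's appendix) that for a morphism $f\colon Y \to \bar{S}$ of finite type to a trait, the nearby cycles functor with the BBD/Gaitsgory shift convention is $t$-exact for the perverse $t$-structure. Since $Y \to \bar{S}$ is flat by construction, applying this to $\calA|_{Y_\oeta} \in P(Y_\oeta)$ yields an object of $P(Y_\os)$; these objects glue across the ind-presentation to produce $\Psi_\fraka(\calA) \in P(\Fl_\fraka)$.

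For equivariance, the global analogue of Lemma \ref{actlem} shows that the action $\calL^+\calG_{\bar{S}} \times_{\bar{S}} Y \to Y$ factors through a smooth affine quotient $\calH$ of $\calL^+\calG_{\bar{S}}$ of finite type with geometrically connected fibers, the latter by Lemma \ref{groupscheme}. An $L_z^+G$-equivariance structure on $\calA|_{Y_\oeta}$ is therefore encoded by an isomorphism $a_\oeta^* \calA \simeq p_\oeta^* \calA$ in $P(\calH_\oeta \times_\oeta Y_\oeta)$ satisfying the standard cocycle condition, where $a$ and $p$ denote the action and projection morphisms. Smooth base change for $R\Psi$ along the smooth morphisms $a, p\colon \calH \times_{\bar{S}} Y \to Y$ then transports this isomorphism to an $\calH_\os$-equivariance isomorphism on $\Psi_\fraka(\calA)|_{Y_\os}$, hence to the required $L^+\calG$-equivariance structure. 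The Galois compatibility is automatic, since all objects and morphisms involved are defined over $\bar{S}$ and the construction of the Galois action on nearby cycles from [SGA7 II, XIII] is functorial under base change.

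The main technical input is smooth base change for nearby cycles applied to the action morphism; this is classical, but one should confirm that the factorization through the smooth finite-type quotient $\calH$ is available globally over $\bar{S}$ and not merely fiberwise, which is why one works with the \emph{global} positive loop group from the outset. With this in place, both claims reduce to bookkeeping.
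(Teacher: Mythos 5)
Your proof follows essentially the same route as the paper: perversity preservation via Beilinson's theorem as presented in Illusie's appendix, and equivariance plus Galois compatibility via smooth base change for $R\Psi$ applied to the action morphism. Your version is slightly more explicit in factoring the action through a finite-type smooth quotient $\calH$ of $\calL^+\calG$ (via the analogue of Lemma~\ref{actlem}) before invoking smooth base change — a necessary step since the action of the full pro-algebraic group is not of finite type — but this is the intended reading of the paper's terse argument, so the two proofs coincide in substance.
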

\begin{proof} The functor $\Psi_\fraka$ preserves perversity by \cite[Appendice, Corollaire 4.2]{I}. An application of the smooth base change theorem to the action morphism $\calL^+\calG\times_S\Gr_\fraka\to\Gr_\fraka$, cf. \eqref{actBD}, implies the equivariance, and the compatibility of the $L^+\calG_\os$-action with the Galois action.
\end{proof}

\subsection{Geometry of special facets} Recall the notion of a special facet (or vertex) in the Bruhat-Tits building $\scrB(G,F)$, cf. \cite{BT1}. Let $A\subset G$ be a maximal $F$-split torus with associated apartment $\scrA=\scrA(G,A,F)$. A facet $\fraka\subset \scrA$ is called \emph{special} if for every affine hyperplane in $\scrA$ there exist a parallel affine hyperplane containing $\fraka$. A facet in the building $\fraka \subset \scrB(G,F)$ is called \emph{special} if $\fraka$ is special in one (hence every) apartment containing $\fraka$.  

For the rest of this subsection, we assume $k$ to be algebraically closed. We consider the functor of nearby cycles $\bar{\Psi}_\fraka$ over $\sF$ 
\[\bar{\Psi}_\fraka\colon  P_{L_z^+G_\sF}(\Gr_{G,\sF})\to P_{L^+\calG}(\Fl_\fraka), \;\;\;\;\;\bar{\Psi}_\fraka(\calA)=\bio^*\bjay_*(\calA).\]
Note that $P_{L_z^+G_\sF}(\Gr_{G,\sF})$ is semi-simple with simple objects the intersection complexes on the $L_z^+G_\sF$-orbit closures, cf. \cite{Ri2}, and hence every object in $P_{L_z^+G_\sF}(\Gr_{G,\sF})$ is defined over some finite extension of $F$. 

The following Theorem proves Theorem B of the introduction.

 \begin{thm}\label{monodromy1} The following properties are equivalent.\smallskip\\
i) The facet $\fraka$ is special.\smallskip\\
ii) The stratification of $\Fl_{\fraka}$ in $L^+\calG$-orbits satisfies the parity property, i.e. in each connected component of $\Fl_\fraka$ all orbits are either even or odd dimensional.\smallskip\\
iii) The category $P_{L^+\calG}(\Fl_\fraka)$ is semi-simple.\smallskip\\
iv) The perverse sheaves $\bar{\Psi}_\fraka(\calA)\in P_{L^+\calG}(\Fl_\fraka)$ are semi-simple for all $\calA\in P_{L_z^+G_\sF}(\Gr_{G,\sF})$.
\end{thm}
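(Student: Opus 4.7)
I propose to prove the cycle $i)\Rightarrow ii)\Rightarrow iii)\Rightarrow iv)\Rightarrow i)$, following Zhu~\cite{RZ} for the first three implications and carrying out the novel $iv)\Rightarrow i)$ directly. For $i)\Rightarrow ii)$: specialness means $W_{0,\fraka}=W_0$, so Lemma~\ref{transmax} identifies ${_\fraka W}^\fraka$ with the image of the $B$-dominant cocharacters in $X_*(T)_I$, and Corollary~\ref{lengthcor} computes $\dim\mathring{Y}_{t^{\bar\mu}}=\lan\bar\mu,2\rho_B\ran$. Since the pairing of any coroot with $2\rho_B$ is an even integer, this dimension has constant parity on each fiber of the Kottwitz map $X_*(T)_I\to \pi_1(G)_I\simeq \pi_0(\Fl_\fraka)$. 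For $ii)\Rightarrow iii)$: the parity property yields odd-degree stalk vanishing for the $\IC$-sheaves of orbit closures, and the standard weight/parity argument then gives semi-simplicity of $P_{L^+\calG}(\Fl_\fraka)$. For $iii)\Rightarrow iv)$: by Lemma~\ref{nearlem}, $\bar\Psi_\fraka(\calA)$ is an object of $P_{L^+\calG}(\Fl_\fraka)$, hence semi-simple.

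For $iv)\Rightarrow i)$, I argue by contraposition using the global Schubert varieties of \S\ref{globSchub}. Assume $\fraka$ is not special, so $W_{0,\fraka}\subsetneq W_0$. Pick a $B$-dominant regular cocharacter $\mu\in X_*(T)$; then $W_{0,\bar\mu}$ is trivial and $|W_{0,\fraka}\bslash W_0/W_{0,\bar\mu}|=[W_0:W_{0,\fraka}]\geq 2$. By Corollary~\ref{maxadmcor} and Lemma~\ref{increlation}, the global Schubert variety $M_\mu\to\bar S$ has irreducible generic fiber $M_{\mu,\oeta}$ (the $L_z^+G_\sF$-orbit closure of $z^\mu$ in $\Gr_{G,\sF}$) but its special fiber $M_{\mu,s}$ contains at least two top-dimensional irreducible components $Y_{t^{\bar\la}}$ for $\bar\la\in(W_0\cdot\bar\mu)^{\fraka\text{-dom}}$, each of dimension $\lan\mu,2\rho_B\ran$. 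By Corollary~\ref{globorbit}, $\mathring M_\mu\subset M_\mu$ is smooth, open dense and meets every top-dimensional component of $M_{\mu,s}$.

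The main obstacle is to convert this geometric asymmetry into non-semi-simplicity of $\bar\Psi_\fraka(\IC_\mu)\in P_{L^+\calG}(\Fl_\fraka)$. My plan is to exploit the nilpotent monodromy operator $N$ coming from the unipotent part of the inertia action on $\Psi_\fraka(\IC_\mu)$: it is an $L^+\calG$-equivariant nilpotent endomorphism of $\bar\Psi_\fraka(\IC_\mu)$, and the monodromy filtration it generates has graded pieces that are pure of distinct weights by Gabber's purity theorem. Proper base change applied to $M_\mu\to\bar S$ identifies $R\Ga(M_{\mu,\oeta},\IC_\mu)$ with $R\Ga(M_{\mu,\os},\bar\Psi_\fraka(\IC_\mu))$ as graded vector spaces; comparing top-degree ranks (one from the irreducible generic fiber versus at least two from the reducible special fiber) forces $N$ to be non-zero. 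A semi-simple perverse sheaf whose non-trivial monodromy filtration has graded pieces of distinct weights cannot split compatibly with $N$, which supplies the required contradiction. The most delicate step is matching weights against top-dimensional contributions along the smooth open locus $\mathring M_\mu\cap M_{\mu,s}$, which I would treat by applying Gabber's purity to the smooth stratification and inspecting the resulting weight-monodromy bookkeeping on stalks; once this is in place, the contrapositive is complete and the cycle closes.
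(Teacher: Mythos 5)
Your cycle $i)\Rightarrow ii)\Rightarrow iii)\Rightarrow iv)\Rightarrow i)$ matches the paper's, and the first three implications agree with the paper (which simply cites Zhu \cite{RZ} for $i)\Rightarrow ii)\Rightarrow iii)$ and uses Lemma \ref{nearlem} for $iii)\Rightarrow iv)$). The problem is your $iv)\Rightarrow i)$: it has a genuine gap and also takes a considerably more complicated route than necessary.

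The gap is in the step where you claim that ``comparing top-degree ranks forces $N$ to be non-zero.'' Proper base change gives an \emph{isomorphism} $\bbH^d(M_{\mu,\oeta},\IC_\mu)\simeq\bbH^d(M_{\mu,\os},\bar\Psi_\fraka(\IC_\mu))$; there is no mismatch to exploit, because nothing you have said yet identifies the right-hand side with $\bbH^d_c$ of the intersection complex on the special fiber. The key fact that the right-hand side is at least $2$-dimensional only follows once you \emph{assume} $\bar\Psi_\fraka(\IC_\mu)$ is semi-simple: then, since its support is the whole of $M_{\mu,s}$ (Zhu, Lemma 7.1 of \cite{Z}) and it is $L^+\calG$-equivariant, the intersection complex $\IC_{M_{\mu,s}}$ must occur as a direct summand, and Lemma \ref{irredcomp} applied to that summand gives $\geq 2$. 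At that point you already have a contradiction against $\dim\bbH^d(M_{\mu,\oeta},\IC_\mu)=1$ and the monodromy operator $N$ plays no role. This is exactly the paper's argument.

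Even setting that aside, the final step of your sketch --- ``a semi-simple perverse sheaf whose non-trivial monodromy filtration has graded pieces of distinct weights cannot split compatibly with $N$'' --- does not work in the present context. Over the separably closed field $\bar{k}$ there are no Frobenius weights, and $N\neq 0$ does not by itself preclude semi-simplicity: a direct sum of two copies of the same $\IC$ is semi-simple but admits a non-zero nilpotent endomorphism. The weight-monodromy bookkeeping you allude to belongs in the arithmetic setting (cf. Proposition \ref{monodromy2}, which is over a finite field and does carry out this comparison carefully), not in the proof of Theorem \ref{monodromy1} over $\bar k$. The clean fix is simply to drop $N$ entirely and run the paper's direct argument: contradiction between $\dim\bbH^d=1$ generically and $\dim\bbH^d\geq 2$ specially, assuming semi-simplicity.
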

 
 Let $A$ be a maximal $F$-split torus, and $T$ its centralizer. Note that $T$ is a maximal torus because $G$ is quasi-split by Steinberg's Theorem. For $\mu\in X_*(T)$, let $M_\mu$ be the corresponding global Schubert variety, cf. \S \ref{globSchub}.

\begin{lem}\label{notirrcor}
Let $\fraka$ be a facet which is not special. Then there exists $\mu\in X_*(T)$ such that the special fiber $M_{\mu,s}$ is not irreducible. 
\end{lem}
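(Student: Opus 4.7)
The plan is to produce a single $\mu \in X_*(T)$ for which the special fiber $M_{\mu,s}$ has at least two irreducible components. I will combine Corollary \ref{maxadmcor} (to count maximal elements in the admissible set) with Lemma \ref{increlation} (to translate those maximal elements into top-dimensional Schubert varieties inside $M_{\mu,s}$) and the equidimensionality of the fibers of $M_\mu$ (to conclude that distinct top-dimensional Schubert varieties inhabit distinct components).

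First, I would choose $\mu \in X_*(T)$ that is $B$-dominant and whose image $\bar{\mu}\in X_*(T)_I$ is $W_0$-regular, i.e.\ $W_{0,\bar{\mu}} = \{1\}$. Such a $\mu$ exists because the canonical projection $X_*(T) \to X_*(T)_I \otimes \bbR = X_*(A)_\bbR$ is $W_0$-equivariant and sends the interior of the $B$-dominant cone to the interior of the relative Weyl chamber, where integral regular elements are plentiful; lifting any such element and translating by a sufficiently dominant cocharacter produces the desired $\mu$. With this choice and using the hypothesis that $\fraka$ is not special, the first remark following the definition of $X_*(T)_I^{\fraka\text{-dom}}$ gives $W_{0,\fraka} \subsetneq W_0$, so Corollary \ref{maxadmcor} yields
\[
\#\{\text{maximal elements of }\Adm_\mu^\fraka\}\;=\;|W_{0,\fraka}\bslash W_0/W_{0,\bar{\mu}}|\;=\;|W_{0,\fraka}\bslash W_0|\;>\;1.
\]

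Second, I would feed this into Lemma \ref{increlation}: each maximal element $t^{\bar{\la}}$ with $\bar{\la}\in(W_0\cdot\bar{\mu})^{\fraka\text{-dom}}$ contributes a distinct irreducible Schubert variety $Y_{t^{\bar{\la}}} \subset M_{\mu,s}$ of dimension $\lan\mu,2\rho_B\ran$, which is also the dimension of the equidimensional fiber $M_{\mu,s}$. Each such $Y_{t^{\bar{\la}}}$ is therefore a full irreducible component, and since there are at least two of them $M_{\mu,s}$ fails to be irreducible. The only place requiring a small amount of care is the first step, the construction of a $B$-dominant $\mu$ with $W_0$-regular image in $X_*(T)_I$; once this generic cocharacter is in hand, the conclusion is a purely bookkeeping assembly of the preceding corollaries.
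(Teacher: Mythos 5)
Your proof is correct and takes essentially the same route as the paper: both select a $B$-dominant $\mu$ whose image $\bar{\mu}\in X_*(T)_I$ is $W_0$-regular (so $W_{0,\bar{\mu}}$ is trivial), then invoke Lemma~\ref{increlation} together with Corollary~\ref{maxadmcor} to count $|W_{0,\fraka}\backslash W_0/W_{0,\bar{\mu}}| = |W_{0,\fraka}\backslash W_0| \geq 2$ distinct top-dimensional Schubert varieties in the equidimensional special fiber $M_{\mu,s}$. Your extra paragraph constructing the regular $\mu$ fills in a small detail the paper leaves implicit (it simply takes a strictly dominant $\bar{\mu}$ and any preimage), but this is the same argument.
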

\begin{proof} 
Let $\bar{\mu}\in X_*(T)_I$ a strictly dominant element, and let $\mu$ be any preimage in $X_*(T)$ under the canonical projection. By Lemma \ref{increlation}, the special fiber $M_{\mu,s}$ contains at least
\[|W_{0,\fraka}\bslash W_0/W_{0,\bar{\mu}}|\]
irreducible components. This number is $\geq 2$ because $W_{0,\bar{\mu}}$ is trivial, and $W_{0,\fraka}\subset W_0$ is a proper subgroup if $\fraka$ is not special.
\end{proof}

The proof of Theorem \ref{monodromy1} is based on the following geometric lemma.

\begin{lem}\label{irredcomp}
Let $Y$ be a separated scheme of finite type over $k$ which is equidimensional of dimension $d$. Then for the compactly supported intersection cohomology
\[\dim_\algQl\bbH_c^{d}(Y,\IC)\;=\; \#\{\text{irreducible components in}\;\;Y\},\]
where $\IC$ denotes the intersection complex on $Y$.
\end{lem}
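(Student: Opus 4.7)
The plan is to reduce to the case that $Y$ is irreducible and then compute the top compactly supported intersection cohomology directly from the support conditions characterizing the intermediate extension.

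First, I would decompose $\IC_Y$ along the irreducible components. Let $Y_1,\ldots,Y_r$ be the irreducible components of $Y$, each of dimension $d$, with closed immersions $\iota_i\colon Y_i\hookto Y$. Let $V\subset Y$ be the smooth locus, open and dense; by equidimensionality, $V\cap(Y_i\cap Y_j)=\varnothing$ for $i\neq j$, so $V=\bigsqcup_i V_i$ with $V_i$ open in $Y_i^{\on{sm}}$, and each $V_i$ is open-and-closed in $V$. Hence $\algQl_V[d]=\bigoplus_i(\sigma_i)_*\algQl_{V_i}[d]$ with $\sigma_i\colon V_i\hookto V$. Now $(\iota_i)_*\IC_{Y_i}$ is a simple perverse sheaf on $Y$ (closed immersions preserve simplicity), its restriction to $V$ equals $(\sigma_i)_*\algQl_{V_i}[d]$, and the direct sum $\bigoplus_i(\iota_i)_*\IC_{Y_i}$ thus restricts to $\algQl_V[d]$ on $V$ with no subobject or quotient supported on $Y\setminus V$. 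By uniqueness of intermediate extension,
\[
\IC_Y\;=\;\bigoplus_{i=1}^r (\iota_i)_*\IC_{Y_i}.
\]
Since each $\iota_i$ is proper, $\bbH_c^d(Y,\IC_Y)=\bigoplus_i\bbH_c^d(Y_i,\IC_{Y_i})$, reducing the claim to $\dim_\algQl\bbH_c^d(Y_i,\IC_{Y_i})=1$ for each irreducible $Y_i$.

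Next, I would treat the irreducible case. Assume $Y$ is irreducible of dimension $d$, let $j\colon U=Y^{\on{sm}}\hookto Y$ and $i\colon Z=Y\setminus U\hookto Y$, so $\dim Z\leq d-1$. Consider the hypercohomology spectral sequence
\[
E_2^{p,q}\;=\;\bbH_c^p(Y,H^q(\IC_Y))\;\Longrightarrow\;\bbH_c^{p+q}(Y,\IC_Y).
\]
The support condition for $\IC_Y=j_{!*}(\algQl_U[d])$ gives $\dim\supp H^q(\IC_Y)\leq -q-1$ for $q>-d$, so $\bbH_c^p(Y,H^q(\IC_Y))=0$ unless $p\leq 2(-q-1)=-2q-2$, i.e.\ $p+q\leq -q-2$. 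Imposing $p+q=d$ forces $q\leq -d-2$, which together with $\IC_Y\in D^{\geq -d}$ leaves only the corner $(p,q)=(2d,-d)$. Hence $\bbH_c^d(Y,\IC_Y)=\bbH_c^{2d}(Y,H^{-d}(\IC_Y))$. Writing $\calF=H^{-d}(\IC_Y)$, the open/closed triangle $j_!(\calF|_U)\to\calF\to i_*(\calF|_Z)$ and the vanishing $\bbH_c^p(Z,-)=0$ for $p>2\dim Z\leq 2d-2$ yield $\bbH_c^{2d}(Y,\calF)\simeq\bbH_c^{2d}(U,\algQl_U)$. Since $U$ is smooth and connected (by irreducibility of $Y$) of dimension $d$, Poincar\'e duality gives $\bbH_c^{2d}(U,\algQl_U)\simeq\algQl$, finishing the irreducible case.

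The main subtlety is the first step: the direct sum decomposition $\IC_Y=\bigoplus(\iota_i)_*\IC_{Y_i}$ really does use the equidimensionality hypothesis, since otherwise components of lower dimension would be invisible to $\IC_Y$ and the count on the right side of the lemma would not match the number of irreducible components. Once this decomposition is in place, the spectral sequence argument and the Poincar\'e duality computation are formal and use only the standard support/cosupport conditions characterizing intermediate extensions.
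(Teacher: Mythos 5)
Your proof is correct, and the engine driving it is the same as the paper's: the support conditions on the intersection complex force all of $\bbH^d_c$ to come from the smooth dense open $U$, where the computation reduces to Poincar\'e duality. The differences are in implementation. You reduce first to the irreducible case via the decomposition $\IC_Y=\bigoplus_i(\iota_i)_*\IC_{Y_i}$ (which is correct and well justified by the uniqueness of the intermediate extension), whereas the paper works directly with the reducible $Y$ and obtains the count of components at the end from the number of top-dimensional connected components of $U$; your extra step is therefore not strictly necessary, since your spectral-sequence computation would apply verbatim to reducible $Y$ if you replaced ``$U$ connected'' by ``$U$ has $r$ top-dimensional connected components.'' The second difference is the choice of $t$-structure in the spectral sequence: you work with the ordinary cohomology sheaves $H^q(\IC_Y)$ and the strict support condition $\dim\supp H^q(\IC_Y)\le -q-1$ for $q>-d$, while the paper restricts to the closed complement $Z$ and runs the spectral sequence in perverse cohomology, using that $\iota^*\IC$ sits in perverse degrees $\le -1$ together with the bound $\bbH^i_c(Z,\calA)=0$ for perverse $\calA$ on $Z$ and $i\ge d$. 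Your version is arguably more elementary (no perverse $t$-structure estimates on $Z$ needed), the paper's is slightly shorter. Both are valid and land on the same identification $\bbH^d_c(Y,\IC_Y)\simeq H^{2d}_c(U,\algQl)$.

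One cosmetic remark: the paper's line ``$j^*\IC=\algQl[-d]$'' is evidently a sign typo for $\algQl_U[d]$ under the perverse normalization; your proof uses the correct normalization throughout.
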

\begin{proof}
We may assume that $Y$ is reduced. Let $U\subset Y$ be an open dense smooth subscheme with reduced complement $\iota\colon Z\hookto Y$. Denote by ${^p\!H^*}$ the perverse cohomology functors. There is a cohomological spectral sequence
\begin{equation}\label{cohss}
E^{ij}_2 \defined \bbH^i_c(Z, {^p\!H}^j(\iota^*\IC))\;\Rightarrow\;  \bbH^{i+j}_c(Z,\iota^*\IC).
\end{equation}
Then ${^p\!H^j}(\iota^*\IC) = 0$ for $j> 0$ because $\iota^*$ is $t$-right exact and ${^p\!H^0}(\iota^*\IC) = 0$ by the construction of $\IC$. If $\calA$ is any perverse sheaf on $Z$, then $\bbH^i_c(Z, \calA)=0$ for $i\geq d$, as follows from $\dim(Z)\leq d-1$ and the standard bounds on intersection cohomology. Hence, \eqref{cohss} implies that $\bbH^i_c(Z,\iota^*\IC)=0$ for $i\geq d-1$. The long exact cohomology sequence associated with $U\overset{j}{\hookto} Y \overset{\iota}{\hookleftarrow} Z$ shows
\[\bbH^d_c(U,j^*\IC)\;\overset{\simeq}{\longto}\;\bbH^d_c(Y,\IC).\]
Since $j^*\IC=\algQl[-d]$, this implies the lemma.
\end{proof}

\begin{rmk}
If $Y$ is not necessarily equidimensional, then a refinement of the argument in Lemma \ref{irredcomp} shows that $\dim_\algQl\bbH_c^{d}(Y,\IC)$ is the number of top-dimensional irreducible components, i.e. the irreducible components of dimension $d$. 
\end{rmk}

\begin{proof}[Proof of Theorem \ref{monodromy1}.]
$i)\Rightarrow ii)\Rightarrow iii)$: This is proven in \cite[Lemma 1.1]{RZ}. See also the discussion above [\emph{loc. cit.}], and the displayed dimension formula. Note that the arguments in [\emph{loc. cit.}] do not use the tamely ramified hypothesis. \smallskip\\
%If $\fraka$ is special, then $\Fl_{\fraka}$ satisfies the parity property by the formula given in \cite[Corollary 1.8]{Ri1}.\smallskip\\ 
%$ii)\Rightarrow iii)$: If $\Fl_{\fraka}$ satisfies the parity property, then $P_{L^+\calG}(\Fl_{\fraka})$ is semi-simple, cf. proof of \cite[Lemma 1.1]{Z}. Indeed, the parity vanishing property (cf. \cite[Appendix A.7]{Ga} or the notes \cite{Haines1}) is valid due to the existence of Demazure resolutions \cite[Proposition 8.8]{PR}, and one can use the argument in \cite{Ri2} to conclude. Note that there are no self-extensions between simple objects because the stabilizers of the $L^+\calG$-action on $\Fl_{\fraka}$ are connected. This shows that $P_{L^+\calG}(\Fl_{\fraka})$ is semi-simple. 
$iii)\Rightarrow iv)$: Trivial, since for $\calA\in P_{L_z^+G_\sF}(\Gr_{G,\sF})$, the perverse sheaf $\bar{\Psi}_\fraka(\calA)$ is in $P_{L^+\calG}(\Fl_{\fraka})$, cf. Lemma \ref{nearlem}. \smallskip\\ 
$iv)\Rightarrow i)$: Assume that $\fraka$ is not special. By Lemma \ref{notirrcor}, there exists $\mu \in X_*(T)$ such that the special fiber of the global Schubert variety $M_\mu$ is not irreducible.  Let $\calA$ be the intersection complex on $M_{\mu,\oeta}$. We claim that $\bar{\Psi}_\fraka(\calA)$ is not semi-simple. Assume the contrary. The support of $\bar{\Psi}_\fraka(\calA)$ is equal to the whole special fiber $M_{\mu,s}$ by \cite[Lemma 7.1]{Z} and, since $\bar{\Psi}_\fraka(\calA)$ is $L^+\calG$-equivariant, the intersection complex on $M_{\mu,s}$ must be a direct summand of $\bar{\Psi}_\fraka(\calA)$. Let $d=\dim(M_{\mu,\oeta})=\dim(M_{\mu,s})$. Taking cohomology 
\[\bbH^d(M_{\mu,\oeta},\calA)\simeq \bbH^d(M_{\mu, s},\bar{\Psi}_\fraka(\calA))\]
contradicts Lemma \ref{irredcomp} because the left side is $1$-dimensional, and the right side is at least $2$-dimensional. This shows that $\bar{\Psi}_\fraka(\calA)$ is not semi-simple.
\end{proof}

As a consequence of the proof, we obtain the following corollary which implies items iv) and v) of Theorem B of the introduction.

\begin{cor}\label{abfall} The following properties are equivalent to properties i)-iv) of Theorem \ref{monodromy1}.\smallskip\\
v) The special fiber of the global Schubert $M_\mu$ in $\Gr_\fraka$ is irreducible for all $\mu\in X_*(T)$.\smallskip\\
vi) The admissible set $\Adm_\mu^\fraka$ has a unique maximal element for all $\mu\in X_*(T)_I$.
\end{cor}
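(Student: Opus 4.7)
The plan is to deduce $i)\Leftrightarrow vi)$ directly from the combinatorial Corollary \ref{maxadmcor}, and to deduce $v)\Leftrightarrow i)$ by combining Lemma \ref{notirrcor} with the nearby cycles semi-simplicity established in Theorem \ref{monodromy1} $iv)$.

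For $i)\Leftrightarrow vi)$: by Corollary \ref{maxadmcor}, whenever $\mu\in X_*(T)$ is $B$-dominant with image $\bar\mu\in X_*(T)_I$, the number of maximal elements of $\Adm_\mu^\fraka$ equals $|W_{0,\fraka}\bslash W_0/W_{0,\bar\mu}|$. If $\fraka$ is special then $W_{0,\fraka}=W_0$, so this count is $1$ for every such $\mu$, which gives $vi)$. For the converse, $W_0$ acts faithfully on $X_*(A)_\bbR\simeq X_*(T)_I\otimes_\bbZ\bbR$, and the image of the dominant cone $X_*(T)^+$ in $X_*(T)_I$ generates $X_*(T)_I$ as a group (any cocharacter is the difference of two sufficiently dominant ones), hence spans $X_*(T)_I\otimes_\bbZ\bbR$. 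Therefore we may select a $B$-dominant $\mu$ whose image $\bar\mu$ lies off every reflection wall, so that $W_{0,\bar\mu}=1$. Then $vi)$ forces $W_{0,\fraka}=W_0$, so $\fraka$ is special.

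The implication $v)\Rightarrow i)$ is the contrapositive of Lemma \ref{notirrcor}. The substantive direction is $i)\Rightarrow v)$. Assume $\fraka$ is special and fix $\mu\in X_*(T)$; set $d=\lan\mu,2\rho_B\ran=\dim M_{\mu,\oeta}=\dim M_{\mu,s}$, and let $\calA=\IC_{M_{\mu,\oeta}}$. By Theorem \ref{monodromy1} $iv)$, $\bar\Psi_\fraka(\calA)$ is a semi-simple $L^+\calG$-equivariant perverse sheaf, supported on $M_{\mu,s}$ by \cite[Lemma 7.1]{Z}. Equivariance together with semi-simplicity yields a decomposition $\bar\Psi_\fraka(\calA)=\bigoplus_w m_w\IC_{Y_w}$ with each $Y_w\subset M_{\mu,s}$ and $m_w\in\bbN$. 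Commutation of nearby cycles with global cohomology for the proper morphism $M_\mu\to\bar{S}$, combined with the irreducibility of $M_{\mu,\oeta}$, gives
\[
\algQl\;\simeq\;\bbH^d(M_{\mu,\oeta},\calA)\;\simeq\;\bbH^d(M_{\mu,s},\bar\Psi_\fraka(\calA))\;\simeq\;\bigoplus_{w\colon l(w)=d}m_w\cdot\algQl,
\]
where the last isomorphism uses Lemma \ref{irredcomp} applied to each irreducible projective Schubert variety $Y_w$ (for which the top cohomology of the intersection complex is one-dimensional, and for which top cohomology vanishes when $l(w)<d$). Consequently exactly one index $w_0$ with $l(w_0)=d$ appears, with multiplicity one.

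To finish, $M_{\mu,s}$ is equidimensional of dimension $d$ and stable under the connected group scheme $L^+\calG$, so every irreducible component of $M_{\mu,s}$ is the closure of an $L^+\calG$-orbit, hence a Schubert variety $Y_w$ with $l(w)=d$. Since the support of $\bar\Psi_\fraka(\calA)$ is all of $M_{\mu,s}$, the generic point of any such component lies in the support of some summand $\IC_{Y_{w'}}$, and top-dimensionality forces $Y_{w'}$ to coincide with that component. The uniqueness of $w_0$ above therefore gives $M_{\mu,s}=Y_{w_0}$, proving $v)$. I expect the delicate step to be precisely this last matching argument: one must combine the full support statement for $\bar\Psi_\fraka(\calA)$ (from \cite[Lemma 7.1]{Z}, which itself uses flatness of $M_\mu$ over $\bar{S}$) with the connectedness of $L^+\calG$ to guarantee that no top-dimensional irreducible component of $M_{\mu,s}$ escapes the nearby cycles decomposition.
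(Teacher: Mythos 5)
Your argument is correct and follows essentially the same route as the paper (which leaves the proof implicit with the phrase ``as a consequence of the proof''). You correctly identify the three ingredients: $v)\Rightarrow i)$ is the contrapositive of Lemma~\ref{notirrcor}; $i)\Rightarrow v)$ reuses the $\bbH^d$ component-counting argument of Lemma~\ref{irredcomp} together with the semi-simplicity of $\bar\Psi_\fraka$ guaranteed by Theorem~\ref{monodromy1}~$iv)$, exactly as in the paper's proof of $iv)\Rightarrow i)$; and $i)\Leftrightarrow vi)$ reduces to the double-coset count $|W_{0,\fraka}\backslash W_0/W_{0,\bar\mu}|$ of Corollary~\ref{maxadmcor}, where the converse requires producing a $B$-dominant $\mu$ with $W_{0,\bar\mu}=1$ (take $\mu$ strictly dominant with respect to the absolute roots). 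Your spelled-out matching of top-dimensional irreducible components with Schubert varieties via the connectedness of $L^+\calG$ and the full-support property from \cite[Lemma~7.1]{Z} is the right way to close the gap that the paper elides.
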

\vspace{-0.4cm}
\hfill\ensuremath{\Box} 

\subsection{Arithmetic of very special facets} In this subsection $k$ is finite, so that $F=k\rpot{t}$ is a local non-archimedean field. We will show that the property of a facet of being very special (cf. Definition \ref{veryspecialdfn} below) is related to the vanishing of the monodromy operator on Gaitsgory's nearby cycles functor, and hence to the triviality of the weight filtration. 

Let $\nF$ be the completion of the maximal unramified subextension of $\sF$, and let $\sig\in \Gal(\nF/F)$ be the Frobenius. Note that there is a $\sigma$-equivariant embedding of buildings
\[\iota\colon \scrB(G,F)\;\longto\;\scrB(G,\nF),\] 
which identifies $\scrB(G,F)$ with the $\sigma$-fixpoints in $\scrB(G,\nF)$. In \cite{RZ}, Zhu defines the notion of very special facets as follows.

\begin{dfn}\label{veryspecialdfn}
A facet $\fraka\subset\scrB(G,F)$ is called \emph{very special} if the unique facet $\nfraka\subset\scrB(G,\nF)$ with $\iota(\fraka)\subset \nfraka$ is special.
\end{dfn}

\begin{rmk}
Every hyperspecial facet is very special. By \cite{Ti} all hyperspecial facets are conjugate under the adjoint group, whereas this is \emph{not} true for very special facets. In fact, the only case among all absolutely simple groups (up to central isogeny), where this is not true, is a ramified unitary group in odd dimensions, cf. [\emph{loc. cit.}].
\end{rmk}

\begin{lem}\label{quasisplitlem} i) If $\fraka$ is a very special facet, then $\fraka$ is special.\smallskip\\
ii) The building $\scrB(G,F)$ contains very special facets if and only if the group $G$ is quasi-split.
\end{lem}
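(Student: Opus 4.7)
The plan is to reduce both parts to statements about the finite Weyl group, exploiting the fact that $G_\nF$ is quasi-split (Steinberg, since the residue field of $\nF$ is separably closed). This yields an identification $W_0 = (W_0^{\on{abs}})^{\sig}$, where $W_0 = W_0(G,A)$, $W_0^{\on{abs}} = W_0(G_\nF,\breve{A})$ for a maximal $\nF$-split torus $\breve{A} \supset A$ defined over $F$, and $\sig \in \Gal(\nF/F)$ is the Frobenius. Throughout I write $\nfraka$ for the unique facet of $\scrB(G,\nF)$ with $\iota(\fraka) \subset \nfraka$.

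For part (i), I use the characterization that $\fraka$ (resp. $\nfraka$) is special iff $W_{0,\fraka} = W_0$ (resp. $W_{0,\nfraka} = W_0^{\on{abs}}$), cf. \S \ref{globSchub}. Because $\iota$ identifies $\scrA(G,A,F)$ with $\scrA(G,\breve{A},\nF)^{\sig}$ in a $\sig$-equivariant way, any $\sig$-fixed reflection $s \in W_{0,\nfraka}$ has a $\sig$-stable fixed hyperplane through $\iota(\fraka)$, whose trace on $\scrA(G,A,F)$ is a wall through $\fraka$. Under $W_0 = (W_0^{\on{abs}})^\sig$ this gives a natural inclusion $W_{0,\nfraka}^\sig \hookrightarrow W_{0,\fraka}$. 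If $\nfraka$ is special, the left hand side equals $(W_0^{\on{abs}})^\sig = W_0$, forcing $W_{0,\fraka} = W_0$, so $\fraka$ is special.

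For part (ii) ($\Leftarrow$), assuming $G$ quasi-split I fix a Borel $B \supset T = Z_G(A)$ over $F$ and invoke the Chevalley-Steinberg construction (cf. \cite{BT2}, \cite{Ti}) to produce a $\sig$-stable special vertex $\nfraka_0 \in \scrA(G,\breve{A},\nF)$; its preimage under $\iota$ is then a very special facet of $\scrB(G,F)$. For ($\Rightarrow$), suppose $\fraka$ is very special, so $\nfraka$ is $\sig$-stable and special and the parahoric $\calG_\nfraka$ inherits a $\sig$-action. The maximal reductive quotient $\bar{M}$ of its special fiber is a connected reductive group over $\bar{k}$ whose root system coincides with the absolute root system of $G_\nF$ precisely by specialness of $\nfraka$, cf. \cite[\S 3.5]{Ti}. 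Since $\bar{k}$ is algebraically closed, $\bar{M}$ is split; applying Lang's theorem to the $\sig$-action on the projective variety of Borel subgroups of $\bar{M}$ yields a $\sig$-fixed Borel $\bar{B}_0 \subset \bar{M}$. Its preimage under the reduction $\calG_\nfraka(\calO_\nF) \twoheadrightarrow \bar{M}(\bar{k})$ is a $\sig$-stable Iwahori of $G(\nF)$, equivalently a $\sig$-stable alcove of $\scrB(G,\nF)$ having $\nfraka$ in its closure, and so corresponds to a $\sig$-stable Borel of $G_\nF$. Galois descent then provides a Borel of $G$ over $F$, so $G$ is quasi-split.

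The delicate step is the ($\Rightarrow$) direction of (ii): the two inputs one needs, namely the Bruhat-Tits identification of the root system of $\bar{M}$ with that of $G_\nF$ (which fails at non-special facets) and the correspondence between $\sig$-stable Borels of $\bar{M}$ and $\sig$-stable Borels of $G_\nF$ compatible with $\calG_\nfraka$, are both internal to Bruhat-Tits theory; I plan to cite \cite{BT2}, \cite[\S 3.5]{Ti} rather than unpack the combinatorics here.
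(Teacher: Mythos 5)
The paper's own proof of this lemma is a pair of citations — part (i) to \cite[1.10.1]{Ti} and part (ii) to \cite[Lemma 6.1]{RZ} — so your attempt to give an actual argument is a different route. For part (ii) your route is plausibly close to what Zhu does: the forward direction via Lang's theorem on $\bar{M}$ to produce a $\sigma$-stable Iwahori, and then to a Borel over $F$ by descent, is the right idea. One step is glossed too quickly: a $\sigma$-stable alcove of $\scrB(G,\nF)$ does not directly ``correspond to'' a Borel of $G_\nF$; you should first use the $\sigma$-stable special vertex $\nfraka$ to identify the star of $\nfraka$ with the vector chambers at the origin, so that the $\sigma$-stable alcove gives a $\sigma$-stable basis of the relative roots of $G_\nF$ and hence a $\sigma$-stable minimal parabolic containing $Z_{G_\nF}(\breve{A})$; since $G_\nF$ is quasi-split this is a Borel, and then Galois descent applies.

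Part (i), however, has a genuine gap. You base the whole argument on the identification $W_0 = (W_0^{\on{abs}})^{\sigma}$, which you attribute to $G_\nF$ being quasi-split. That is a non sequitur: Steinberg's theorem controls the relationship between $W_0(G_\nF,\breve{A})$ and the absolute Weyl group $W_0(G_\sF,T_\sF)$, not the relationship between the relative Weyl group $W_0(G,A)$ over $F$ and $W_0(G_\nF,\breve{A})^{\sigma}$. The latter identification requires $G$ to be quasi-split \emph{over $F$}, which part (i) must not assume. Concretely, take $G = \SL_1(D)$ for a quaternion division algebra $D/F$: then $A = 1$, so $W_0(G,A)$ is trivial, while $G_\nF \simeq \SL_2$ and $W_0(G_\nF,\breve{A})^{\sigma} \simeq \bbZ/2$ (the $\sigma$-action on a group of order two is trivial). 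So the identification fails, and with it the step ``$W_{0,\nfraka}^{\sigma} \hookrightarrow W_{0,\fraka}$'' which presupposes that $(W_0^{\on{abs}})^{\sigma}$ maps into $W_0$. For non-quasi-split $G$ there is no natural injection from the relative Weyl group into the absolute one (it is rather a subquotient), so the Weyl-group comparison needs to be replaced by the building-theoretic descent argument that \cite[1.10.1]{Ti} encodes: every wall of $\scrA(G,A,F)$ is the trace of a wall of $\scrA(G,\breve{A},\nF)$, and the parallel wall through $\nfraka$ traces to a parallel wall through $\fraka$.
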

\begin{proof}
Part i) follows from \cite[1.10.1]{Ti}, and part ii) from \cite[Lemma 6.1]{RZ}.
\end{proof}

Recall the construction of the monodromy operator, see \cite[\S 5]{GH} for details. Let $I\subset \Ga$ be the inertia subgroup, i.e. $\Ga/I=\Gal(\nF/F)$. Let $P\subset I$ be the wild inertia group, so that
\[I/P\;=\;\prod_{\ell'\not= p}\bbZ_{\ell'}(1),\]
and denote by $t_\ell\colon I\to \bbZ_\ell(1)$ the composition of $I\to I/P$ with the projection on $\bbZ_\ell(1)$. If $Y$ is a separated $k$-scheme of finite type, then let $D_c^b(Y\times_s\eta,\algQl)$ be the bounded derived category of constructible $\algQl$-complexes together with a continuous $\Ga$-action as above. Let $\calA\in D_c^b(Y\times_s\eta,\algQl)$, and denote by $\rho\colon  I\to \Aut_{D_c^b}(\calA)$ the inertia action. Then $\rho(I)$ acts quasi-unipotently in the sense that there is an open subgroup $I_1\subset I$ such that $\rho(g)-\id_\calA$ acts nilpotently for all $g\in I_1$. There is a unique nilpotent morphism
\[N_\calA\colon \calA(1)\;\longto\;\calA\]
characterized by the equality $\rho(g)=\exp(t_\ell(g)N_\calA)$ for all $g\in I_1$, and $N_\calA$ is independent of $I_1$. 

The choice of a Frobenius element in $\Ga$ defines a semi-direct product decomposition $\Ga=I\rtimes \Gal(\bar{k}/k)$. Recall that if $\calA\in P(Y\times_s\eta)$ then, by restricting the $\Ga$-action on $\calA$ to $\Gal(\bar{k}/k)$, the underlying perverse sheaf is equipped with a continuous $\Gal(\bar{k}/k)$-descent datum, and hence defines an element $\calA_0\in P(Y)$. Then $\calA$ is called \emph{mixed} (resp. \emph{pure of weight $w$}) if $\calA_0$ is mixed (resp. pure of weight $w$). Note that all Frobenius elements are conjugate under the inertia group $I$, and hence the notion of mixedness (resp. purity) does not depend on this choice, cf. [Weil2].

Let $\om$ be the global cohomology functor with Tate twists included
\begin{equation}\label{fiberfun1}
\om(\str)\defined\bigoplus_{i\in\bbZ}(R^i\Ga(\Gr_{G,\sF},(\str)_\sF)({i\over 2}))\colon \;P_{L_z^+G}(\Gr_G)\;\longto\;\vs_\algQl.
\end{equation}
Note that if $\calA$ is an intersection complex on a $L_z^+G$-stable closed subscheme of $\Gr_G$, then the Galois action on $\om(\calA)$ factors through a finite quotient of $\Ga$, cf. \cite[Appendix]{RZ}. This explains the Tate twist in \eqref{fiberfun1}.

The following proposition together with Theorem \ref{monodromy1} implies item vi) of Theorem B of the introduction.

\begin{prop}\label{monodromy2} Let $\calA\in P_{L_z^+G}(\Gr_G)$ such that the $\Ga$-action on $\om(\calA)$ factors through a finite quotient. Then the following properties are equivalent.\smallskip\\
i) The perverse sheaf $\Psi_\fraka(\calA)_\os\in P_{L^+\calG_\os}(\Fl_{\fraka,\os})$ is semi-simple.  \smallskip\\
ii) The nearby cycles complex $\Psi_\fraka(\calA)$ is pure of weight $0$. \smallskip\\
iii) The monodromy operator $N_{\Psi_\fraka(\calA)}=0$ vanishes.
\end{prop}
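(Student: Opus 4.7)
The plan is to prove the cycle iii) $\Rightarrow$ ii) $\Rightarrow$ i) $\Rightarrow$ iii), combining the Beilinson-Bernstein-Deligne theorem on semi-simplicity of pure perverse sheaves, the monodromy-weight theorem for nearby cycles, and a Schur-type decomposition argument at the level of global cohomology. As a preliminary step, I observe that the hypothesis on $\om(\calA)$ in fact upgrades to the stronger statement that $\calA$ itself is pure of weight $0$: since $P_{L_z^+G_\sF}(\Gr_{G,\sF})$ is semi-simple with pure simple objects $\IC_\mu$ (cf.\ \cite{Ri2}), one may write $\calA = \bigoplus V_\mu \otimes \IC_\mu$, and the condition that $\Ga$ acts on $\om(\calA) = \bigoplus V_\mu \otimes \om(\IC_\mu)$ through a finite quotient forces the Frobenius eigenvalues on each multiplicity space $V_\mu$ to be roots of unity, hence of weight $0$.

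For iii) $\Rightarrow$ ii), I would apply the monodromy-weight theorem to the ind-projective family $\Gr_\fraka\to S$ (cf.\ Lemma~\ref{satcat1}): for the pure complex $\calA$, the graded pieces $\on{gr}_k^M \Psi_\fraka(\calA)$ of the monodromy filtration are pure perverse sheaves whose weights separate different values of $k$. If $N = 0$, the requirement that $N^k\colon \on{gr}_k^M \to \on{gr}_{-k}^M$ be an isomorphism for every $k$ forces $\on{gr}_k^M = 0$ for $k \ne 0$, so the monodromy filtration collapses and $\Psi_\fraka(\calA)$ is pure of weight $0$. The implication ii) $\Rightarrow$ i) is then the Beilinson-Bernstein-Deligne theorem that a pure perverse sheaf over an algebraically closed base is semi-simple; the $L^+\calG_\os$-equivariance passes to the simple constituents, since the action factors through smooth connected quotients, cf.\ Lemma~\ref{actlem}.

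The main obstacle is i) $\Rightarrow$ iii), where the sheaf-theoretic vanishing of the monodromy operator has to be extracted from semi-simplicity of the underlying perverse sheaf together with the arithmetic hypothesis on cohomology. Semi-simplicity gives an isotypic decomposition
\[
\Psi_\fraka(\calA)_\os \;\simeq\; \bigoplus_{w\in {_\fraka W}^\fraka}\,V_w \otimes_{\algQl} \IC_w
\]
in $P_{L^+\calG_\os}(\Fl_{\fraka,\os})$, where $V_w = \Hom(\IC_w, \Psi_\fraka(\calA)_\os)$ is a finite-dimensional $\algQl$-vector space. Since $\algQl$ is algebraically closed, Schur's lemma yields $\End(\IC_w) = \algQl$ and $\Hom(\IC_w, \IC_{w'}) = 0$ for $w \ne w'$, so every endomorphism of $\Psi_\fraka(\calA)_\os$ (the Tate twist being invisible over $\os$) is block-diagonal of the form $\bigoplus_w N_w \otimes \id_{\IC_w}$ with $N_w \in \End(V_w)$; applied to the monodromy operator, quasi-unipotence of the local monodromy forces each $N_w$ to be nilpotent. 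Applying the global cohomology functor $\om$, and using proper base change to identify $\om(\Psi_\fraka(\calA)) = \om(\calA)$ as inertia modules, the induced operator on $\om(\calA)$ is $\bigoplus_w N_w \otimes \id_{\om(\IC_w)}$. The assumption that $\Ga$ acts through a finite quotient on $\om(\calA)$ forces the unipotent operator $\exp(t_\ell(g)N)$ to be of finite order, hence trivial, for $g$ in an open subgroup of inertia; choosing such a $g$ with $t_\ell(g) \ne 0$ yields $N = 0$ on $\om(\calA)$. Since each Schubert variety $Y_w$ is a non-empty projective variety, $\om(\IC_w) = IH^*(Y_w,\algQl) \ne 0$, forcing each $N_w$ to vanish. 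Hence $N = 0$ on $\Psi_\fraka(\calA)_\os$, and since a morphism in $D_c^b(\Fl_\fraka\times_s\eta,\algQl)$ is determined by its underlying morphism over $\os$, we conclude $N_{\Psi_\fraka(\calA)} = 0$.
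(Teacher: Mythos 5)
Your proof is correct, and for the hard direction it takes a genuinely different route than the paper. Where the paper proves $i)\Rightarrow ii)$ by writing the associated graded of the weight filtration of $\Psi_\fraka(\calA)$ as $\bigoplus_\beta\calA_\beta$, using $\om_s(\Psi_\fraka(\calA))\simeq\om(\calA)$ to conclude that $\om_s(\calA_\beta)=0$ for $\beta\neq 0$, and then invoking Lemma~\ref{irredcomp} to conclude $\calA_\beta=0$, you instead prove $i)\Rightarrow iii)$ directly: semi-simplicity gives the isotypic decomposition $\bigoplus_w V_w\otimes\IC_w$, Schur's lemma forces the monodromy operator into block-diagonal form $\bigoplus_w N_w\otimes\id$, and the finite-quotient hypothesis kills $N$ on $\om(\calA)\simeq\om_s(\Psi_\fraka(\calA))$, whence each $N_w\otimes\id_{\om(\IC_w)}=0$; the non-vanishing $\om(\IC_w)\neq 0$ (which is Lemma~\ref{irredcomp} again) then gives $N_w=0$. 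Thus both proofs hinge on the same two ingredients --- the proper-base-change identification of global cohomology across nearby cycles, and the cohomological non-degeneracy of intersection complexes --- but you extract the monodromy vanishing through Schur's lemma on an isotypic decomposition rather than through weight-filtration degeneration. For $iii)\Rightarrow ii)$ you appeal to the full Gabber monodromy-weight theorem for nearby cycles of pure sheaves, whereas the paper gives the more elementary argument of Lemma~\ref{Gabberlem} via Illusie's distinguished triangle and upper semi-continuity of weights, thereby avoiding the general theorem; your version is fine but strictly stronger input. Finally, your preliminary remark that $\calA$ is pure of weight $0$ deserves a little care since the decomposition $\calA_\sF=\bigoplus V_\mu\otimes\IC_\mu$ is a priori only over $\sF$ and the Galois structure on the $V_\mu$ needs to be tracked, but this only feeds into the $iii)\Rightarrow ii)$ step, where citing the monodromy-weight theorem already packages the purity statement you need.
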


This proposition and Theorem \ref{monodromy1} imply that the monodromy of $\Psi_\fraka$ is non-trivial whenever $\fraka$ is not very special. Note that in Theorem \ref{monodromy1} the residue field is assumed to be algebraically closed, and hence the notion of special facets and very special facets coincide. In fact, one can show that the monodromy of $\Psi_\fraka$ is maximally non-trivial, cf. \cite{Ri3}. The equivalence $ii)\Leftrightarrow iii)$ is a special case of the weight monodromy conjecture for perverse sheaves proven by Gabber \cite{BB}. Since the proof is easy using semi-continuity of weights, we explain it below.

\begin{lem}\label{Gabberlem}
Let $\fraka$ be a facet, and let $\calA\in P_{L_z^+G}(\Gr_G)$. Then $\Psi_\fraka(\calA)$ is pure if and only if $N_{\Psi_\fraka(\calA)}=0$. In this case, $\Psi_\fraka(\calA)$ is pure of weight $0$.
\end{lem}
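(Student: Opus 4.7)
My plan is to combine Deligne's semi-continuity of weights with Gabber's weight-monodromy theorem for nearby cycles in equal characteristic, both applicable because $\Gr_\fraka \to S$ is ind-proper (Lemma \ref{satcat1}). First I observe that every object of $\Sat_G$ is pure of weight $0$: its simple constituents are intersection complexes on $L^+_zG$-orbit closures in $\Gr_G$, normalized so that each is a perverse sheaf pure of weight $0$ (this is consistent with the half-integer Tate twists $(i/2)$ appearing in the definition of $\om$). It follows that $\Psi_\fraka(\calA)$ is a mixed perverse sheaf on $\Fl_\fraka \times_s \eta$.

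The second and decisive input is Gabber's compatibility of the weight filtration $W_\bullet$ with the monodromy operator: $N \colon \Psi_\fraka(\calA) \to \Psi_\fraka(\calA)(-1)$ is a morphism of mixed perverse sheaves, so $N(W_i) \subset W_{i-2}(-1)$, and moreover the powers $N^k$ induce isomorphisms $\on{gr}^W_k \overset{\sim}{\longto} \on{gr}^W_{-k}(-k)$ for every $k \geq 0$. These isomorphisms imply that the weight filtration of $\Psi_\fraka(\calA)$ is symmetric around the weight $0$ of $\calA$.

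Granting these inputs, both directions of the lemma reduce to simple bookkeeping with the filtration. If $\Psi_\fraka(\calA)$ is pure of weight $w$, the weight filtration collapses to a single graded piece $\on{gr}^W_w$, and the symmetry $\on{gr}^W_w \simeq \on{gr}^W_{-w}(-w)$ forces $w = 0$. Then $W_{-1} = 0$, and $N(W_0) \subset W_{-2}(-1) \subset W_{-1}(-1) = 0$ yields $N = 0$, giving also the weight statement. Conversely, if $N = 0$ then $N^k = 0$ for all $k \geq 1$, so each isomorphism $\on{gr}^W_k \overset{\sim}{\longto} \on{gr}^W_{-k}(-k)$ forces both sides to vanish for $k \neq 0$. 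Hence $\Psi_\fraka(\calA) = \on{gr}^W_0$ is pure of weight $0$.

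The main obstacle is the availability of the full weight-monodromy isomorphism $N^k \colon \on{gr}^W_k \overset{\sim}{\longto} \on{gr}^W_{-k}(-k)$. This is the deep part of the weight-monodromy conjecture for $\ell$-adic perverse sheaves in equal characteristic, established by Gabber and cited by the author as [BB]. Once granted, the argument above is immediate and indeed relies only on the semi-continuity-type information encoded in the symmetry of the weight filtration, as the author suggests in the remark preceding the lemma.
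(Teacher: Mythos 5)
Your proof is correct but it takes a genuinely different route from the paper. You invoke the full weight--monodromy theorem of Gabber (the identification of the weight filtration with the monodromy filtration, i.e.\ the isomorphisms $N^k\colon \on{gr}^W_k \overset{\sim}{\to}\on{gr}^W_{-k}(-k)$, centered at the weight of $\calA$), and then both directions become a short exercise with the resulting symmetry of the filtration. This is exactly the ``special case of the weight monodromy conjecture \dots proven by Gabber'' that the author alludes to in the remark preceding the lemma, but then explicitly chooses \emph{not} to use, precisely because a more elementary argument is available. The paper's own proof instead runs as follows: the forward direction is immediate since any morphism $\Psi_\fraka(\calA)(1)\to\Psi_\fraka(\calA)$ between pure complexes of different weights vanishes. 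For the converse, Illusie's distinguished triangle $\bio^*j_*\calA[-1]\to\Psi_\fraka(\calA)\overset{N}{\to}\Psi_\fraka(\calA)\to$ degenerates when $N=0$ and gives $\Psi_\fraka(\calA)\simeq \bio^*j_{!*}(\calA)$; then semi-continuity of weights under $\bio^*$ gives $w(\Psi_\fraka(\calA))\le w(j_{!*}\calA)\le w(\calA)=0$, and the compatibility of nearby cycles with Verdier duality together with self-duality of $\calA$ gives the opposite inequality, hence purity of weight $0$. Your version is shorter but conceals all the difficulty in the citation of Gabber; the paper's version only uses the easy inequality in the theory of weights (semi-continuity) plus Illusie's triangle, and is thus more self-contained — which is why the author bothers to spell it out at all.
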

\begin{proof}
If $\Psi_\fraka(\calA)$ is pure, then $N_{\Psi_\fraka(\calA)}\colon \Psi_\fraka(\calA)(1)\to\Psi_\fraka(\calA)$ vanishes due to weight reasons. Conversely suppose that $N_{\Psi_\fraka(\calA)}=0$. By \cite[Proof of Corollaire 4.6 (c) Cas (ii)]{I}, there is a distinguished triangle 
\[\bar{i}^*j_*\calA \longto \Psi_\fraka(\calA)\overset{0}{\longto} \Psi_\fraka(\calA) \longto\]
where $j\colon \Gr_{\fraka,\eta}\to \Gr_\fraka$ denotes the open embedding. Hence, on perverse cohomology 
\[\Psi_\fraka(\calA)\;\simeq\; {^pH}^{0}(\bar{i}^*j_*\calA)\;\simeq\; \bar{i}^*j_{!*}(\calA)[-1].\] 
This implies for the weights
\[w(\Psi_\fraka(\calA))\leq w(j_{!*}(\calA))\leq w(\calA)=0,\]
and since $\Psi_\fraka$ commutes with duality, we get $w(\Psi_\fraka(\calA))=0$.
\end{proof}

\begin{proof}[Proof of Proposition \ref{monodromy2}.] The implication $ii)\Rightarrow i)$ is a consequence of Gabber's Decomposition Theorem (cf. \cite[Chapter III.10]{KW}) because $\Psi_\fraka(\calA)$ is defined over the ground field $k$. In view of Lemma \ref{Gabberlem}, we are reduced to proving the implication $i) \Rightarrow ii)$: Let $\calA\in P_{L_z^+G}(\Gr_G)$ such that the $\Ga$-action on $\om(\calA)$ factors through a finite quotient. Hence, after a finite base change $S'\to S$, we may assume that the Galois action on the global cohomology $\om(\calA)$ is trivial. By Deligne [Weil2], the nearby cycles $\Psi_\fraka(\calA)$ are mixed because $\Gr_{\fraka}$ is already defined over a smooth curve over $k$. Let  
\[\text{gr}^\bullet\Psi_\fraka(\calA)\;\simeq\;\bigoplus_\beta\calA_\beta,\]
be the associated graded of the weight filtration, where $\calA_\beta\in P_{L^+\calG}(\Fl_{\fraka})$ is pure of weight $\beta$. Let $\om_s\colon P_{L^+\calG}(\Fl_{\fraka}\times_s\eta)\to \vs_{\algQl}$ be the global cohomology with Tate twists included as in \eqref{fiberfun1}. If $\Psi_\fraka(\calA)_\os$ is semi-simple, then $\om_s(\Psi_\fraka(\calA))=\om_s(\text{gr}^\bullet\Psi_\fraka(\calA))$ as Galois representations. Because the Galois action on $\om_s(\Psi_\fraka(\calA))\simeq \om(\calA)$ is trivial, it follows that $\om_s(\calA_\beta)=0$ for $\beta\not =0$. But $\calA_{\beta,\os}$ is the direct sum of intersection complexes, and hence $\om_s(\calA_\beta)=0$ implies $\calA_\beta=0$, cf. Lemma \ref{irredcomp}. This shows that $\Psi_\fraka(\calA)$ is pure of weight $0$.
\end{proof}

\section{Satake categories}\label{satcatpara}
In \S \ref{unramsatcatpara}, we recall some facts from the unramified geometric Satake equivalence, cf. \cite{Gi}, \cite{MV} for complex coefficients, and \cite{Ri2}, \cite[Appendix]{RZ} for the case of $\ell$-adic coefficients. In \S \ref{ramsatcatpara}, the ramified geometric Satake equivalence for ramified groups of Zhu \cite{RZ} is explained. Zhu considers in \cite{RZ} tamely ramified groups. We extend his results to include the wildly ramified case. The proof of Theorem C from the introduction is given at the end of \S \ref{ramsatcatpara}.

\subsection{The unramified Satake category}\label{unramsatcatpara} Let $G$ be a connected reductive group over any field $F$. Let $\Gr_G$ be the affine Grassmannian over $\Gr_G$ with its left action by the positive loop group $L_z^+G$, cf. \S \ref{globSchub}. Let $\sF$ be a separable closure of $F$, and denote by $\Ga$ the absolute Galois group. Let $J$ be the set of Galois orbits on the set of $L_z^+G_{\sF}$-orbits in $\Gr_{G,\sF}$. Each $\ga\in J$ defines a connected smooth $L_z^+G$-invariant subscheme $\calO_\ga$ over $F$. We have a $L^+_zG$-invariant ind-presentation of the reduced locus $(\Gr_G)_\red=\varinjlim_\ga \olO_\ga$  by the reduced closures $\olO_\ga$. 

Fix a prime $\ell$ different from the characteristic of $F$. Let 
\[P_{L_z^+G}(\Gr_G)=\varinjlim_\ga P_{L_z^+G}(\olO_\ga)\] 
be the category of $L^+_zG$-equivariant $\ell$-adic perverse sheaves on $\Gr_G$, cf. \S \ref{monpara}.

\begin{lem}\label{gen1}
The category $P_{L_z^+G}(\Gr_G)$ is abelian $\algQl$-linear, and its simple objects are middle perverse extensions $i_*j_{!*}(V[\dim(\calO_\ga)])$, where  $j\colon \calO_\ga\hookto\olO_\ga$, $i\colon \olO_\ga\hookto\Gr_G$, and $V$ is a simple $\ell$-adic local system on $\Spec(F)$.
\end{lem}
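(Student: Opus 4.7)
I break the proof into three steps, corresponding to three essentially independent assertions.

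\emph{Step 1 (Abelian $\algQl$-linear structure).} Since $\Gr_G$ is a strict ind-scheme with the $L_z^+G$-stable ind-presentation $(\Gr_G)_\red=\varinjlim_\ga\olO_\ga$, every perverse sheaf is supported on some finite-type closed subscheme $\olO_\ga$, so
\[P_{L_z^+G}(\Gr_G)\;=\;\varinjlim_\ga P_{L_z^+G}(\olO_\ga).\]
On each $\olO_\ga$, the action of $L_z^+G$ factors through a smooth affine quotient of finite type (as in Lemma~\ref{actlem}), and hence $P_{L_z^+G}(\olO_\ga)$ is the usual category of equivariant perverse sheaves for a smooth algebraic group action, which is abelian $\algQl$-linear, with the forgetful functor to $P(\olO_\ga)$ fully faithful and exact. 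Filtered $2$-colimits of such categories inherit the abelian $\algQl$-linear structure.

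\emph{Step 2 (Reduction to equivariant local systems on a single orbit).} Each $\olO_\ga$ has a locally closed $L_z^+G$-stable stratification $\olO_\ga=\coprod_{\ga'\leq\ga}\calO_{\ga'}$ into smooth irreducible strata. A simple equivariant perverse sheaf has irreducible support $\olO_{\ga'}$ for some $\ga'$, and its restriction to the open stratum $\calO_{\ga'}$ is a shift of a simple equivariant $\algQl$-local system $\calL$; by the standard middle extension formalism the whole object is $i_*j_{!*}(\calL[\dim\calO_{\ga'}])$.

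\emph{Step 3 (Equivariant local systems on an orbit descend from the base).} It remains to show that simple $L_z^+G$-equivariant $\algQl$-local systems on $\calO_\ga$ are precisely the pullbacks $f^*V$ of simple local systems $V$ on $\Spec(F)$ along the structure map $f\colon\calO_\ga\to\Spec(F)$. Over $\sF$, a single geometric orbit $\calO^0_{\ga,\sF}\subset\calO_{\ga,\sF}$ can be written as $L_z^+G_\sF/\text{Stab}$; modding out the pro-unipotent radical of the stabilizer identifies $\calO^0_{\ga,\sF}$ with an iterated affine bundle over a partial flag variety of $G_\sF$, in particular with a smooth simply connected $\sF$-variety. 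Hence every $L_z^+G_\sF$-equivariant local system on $\calO^0_{\ga,\sF}$ is constant. Since $\calO_{\ga,\sF}$ is the finite disjoint union of such simply connected components, permuted transitively by $\Ga$, the datum of a $L_z^+G$-equivariant $\algQl$-local system $\calL$ on $\calO_\ga$ amounts to a continuous $\Ga$-representation on a geometric fiber of $\calL$, equivalently to a local system $V$ on $\Spec(F)$ with $\calL\simeq f^*V$; simple $\calL$'s correspond to simple $V$'s.

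The main obstacle is Step~3: making the descent argument precise requires, on the one hand, the geometric input that each $L_z^+G_\sF$-orbit is simply connected (via the pro-unipotent-radical-mod-parabolic identification), and on the other hand, careful bookkeeping of the Galois action on the set of geometric components of $\calO_\ga$ to translate equivariance on $\calO_\ga$ into pure Galois data on $\Spec(F)$.
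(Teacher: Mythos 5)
Your Steps 1 and 2 are fine and match the paper (the paper cites Laszlo--Olsson~\cite{LO} for the structure of simple perverse sheaves, which your Step 2 implicitly reproduces). The problem is Step~3: you deduce that every equivariant $\ell$-adic local system on a geometric orbit $\calO^0_{\ga,\sF}$ is constant from the claim that $\calO^0_{\ga,\sF}$ is \emph{simply connected}, being an iterated affine bundle over a partial flag variety. In positive characteristic this is false: affine space $\bbA^1$ over $\sF$ is not \'etale simply connected and carries nontrivial $\ell$-adic local systems (e.g.\ Artin--Schreier sheaves $\calL_\psi$), so the inference ``affine bundle over a simply connected base $\Rightarrow$ no nontrivial local systems'' breaks down. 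The relevant field $F$ in the lemma is allowed to be (and in the paper's applications is) $k\rpot t$ of positive characteristic, so this is a genuine gap, not an edge case.

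The paper avoids the issue entirely and takes a cleaner route: equivariant local systems on a single geometric orbit $L_z^+G_\sF/\on{Stab}$ are classified by representations of the component group $\pi_0(\on{Stab})$, not by $\pi_1$ of the orbit, and the stabilizers of the $L_z^+G_\sF$-action are \emph{connected} by Ng\^o--Polo~\cite[Lemme 2.3]{NP}. Connectedness of stabilizers immediately gives triviality of equivariant local systems in any characteristic, with no input about the fundamental group of the orbit. You should replace your simply-connectedness argument by this one; the rest of your Step~3 (Galois bookkeeping to descend from $\sF$ to $F$) then goes through.
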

\begin{proof}
By \cite{LO}, the simple objects in $P(\Gr_G)$ are of the form $\calA=i_*j_{!*}(\calA_0)$ for $j\colon U\to \bar{U}$ a smooth irreducible open subscheme of a closed subscheme $i\colon \bar{U}\to\Gr_G$, and $\calA_0[-\dim(U)]$ a simple $\ell$-adic local system on $U$. If $\calA$ is $L_z^+G$-equivariant, then $U$ is $L_z^+G$-invariant. In this case, $U_{\sF}$ is a single Galois orbit of $L_z^+G_{\sF}$-orbits, and hence $U=\calO_\ga$ for some $\ga\in J$. On the other hand, the stabilizers of the $L_z^+G_{\sF}$-action are connected by \cite[Lemme 2.3]{NP}, and thus $\calA_0=V[\dim(U)]$ where $V$ is a simple $\ell$-adic local system on $\Spec(F)$. 
\end{proof}

If $F$ is separably closed, the category $P_{L_z^+G}(\Gr_G)$ is semi-simple with simple objects the intersection complexes on the $L_z^+G$-orbit closures, cf. \cite{Ri2}.

\begin{dfn}
The \emph{unramified Satake category $\Sat_{G,\sF}$ over $\sF$} is the category $P_{L_z^+G_\sF}(\Gr_{G,\sF})$.
\end{dfn}

A version of $\Sat_{G,\sF}$ over the ground  field $F$ is defined as follows. Fix $\sqrt{p}\in\algQl$ so that half-integral Tate twists are defined. For a complex $\calA\in D_c^b(Y,\algQl)$ on any separated scheme $Y$ of finite type over $F$, we introduce the shifted and twisted version $\calA\lan m\ran=\calA[m]({m\over 2})$ for $m\in\bbZ$. Now let $Y$ be a equidimensional smooth scheme over $F$. Let $F'/F$ be a finite separable field extension. Then we say that a complex $\calA_0$ in $D_c^b(Y,\algQl)$ is \emph{constant on $Y$ over $F'$} if $\calA_{0, F'}$ is a direct sum of copies of $\algQl\lan \dim(Y)\ran$. 

For every $\ga\in J$, let $\iota_\ga\colon \calO_\ga\to\Gr_G$ be the corresponding locally closed embedding.

\begin{dfn}
The \emph{unramified Satake category $\Sat_G$ over $F$} is the full subcategory of $P_{L_z^+G}(\Gr_G)$ of semi-simple objects $\calA$ such that there exists a finite separable extension $F'/F$ with the property that the $0$-th perverse cohomology $^p\!H^0(\iota_\ga^*\calA)$ and $^p\!H^0(\iota_\ga^!\calA)$ are constant on $\calO_\ga$ over $F'$ for each $\ga\in J$.
\end{dfn}

For any $\ga\in J$, we define $\IC_\ga= i_*j_{!*}(\algQl\lan\dim(\calO_\ga)\ran)$ where $\calO_\ga\overset{j}{\hookto}\olO_\ga\overset{i}{\hookto}\Gr_G$ is the open embedding into the closure.

\begin{lem} \label{simpobj}
Let $\calA\in P_{L_z^+G}(\Gr_G)$ be a simple object. Then $\calA\in \Sat_G$ if and only if there is an $\ga\in J$ such that $\calA\simeq\IC_\ga\otimes V$ where $V$ is a simple local system on $\Spec(F)$ that is trivial over some finite extension $F'/F$.
\end{lem}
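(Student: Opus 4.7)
The plan is to combine Lemma \ref{gen1} with the defining support and costalk vanishing of the intermediate extension, together with the connectedness of $L_z^+G$-orbit stabilizers already invoked in the proof of Lemma \ref{gen1}. Lemma \ref{gen1} gives a unique $\ga_0\in J$ and a simple $\ell$-adic local system $W$ on $\Spec(F)$ with $\calA\simeq i_*j_{!*}(W_{\calO_{\ga_0}}[\dim\calO_{\ga_0}])$, where $W_{\calO_{\ga_0}}$ denotes the pullback of $W$. Setting $V:=W(-\dim\calO_{\ga_0}/2)$ rewrites this as $\calA\simeq\IC_{\ga_0}\otimes V$, and the lemma reduces to the equivalence between the stalk/costalk constancy defining $\Sat_G$ and the existence of a finite $F'/F$ trivializing $V$.

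For the direction $(\Leftarrow)$, assume $V|_{\Spec(F')}\simeq\algQl^n$; then $\calA$ is simple, hence semi-simple. Let $\ga\in J$. If $\calO_\ga$ is disjoint from $\olO_{\ga_0}$ then $\iota_\ga^*\calA$ and $\iota_\ga^!\calA$ both vanish. If $\calO_\ga\subset\olO_{\ga_0}$ is a proper boundary stratum, the defining properties of $j_{!*}$ place $\iota_\ga^*\IC_{\ga_0}$ in ${}^p\!D^{\leq-1}$ and $\iota_\ga^!\IC_{\ga_0}$ in ${}^p\!D^{\geq 1}$; tensoring with the lisse sheaf $V$ preserves these ranges, so $^p\!H^0(\iota_\ga^*\calA)=0={}^p\!H^0(\iota_\ga^!\calA)$. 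For $\ga=\ga_0$, both $^p\!H^0(\iota_{\ga_0}^*\calA)$ and $^p\!H^0(\iota_{\ga_0}^!\calA)$ equal $V_{\calO_{\ga_0}}\lan\dim\calO_{\ga_0}\ran$, which base-changed to $F'$ is a direct sum of copies of $\algQl\lan\dim\calO_{\ga_0}\ran$ by the hypothesis on $V$. Hence $\calA\in\Sat_G$.

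For the direction $(\Rightarrow)$, applying the definition of $\Sat_G$ at $\ga=\ga_0$ yields a finite $F'/F$ such that $V_{\calO_{\ga_0},F'}\lan\dim\calO_{\ga_0}\ran$ is a direct sum of copies of $\algQl\lan\dim\calO_{\ga_0}\ran$; equivalently, the pullback of $V|_{\Spec(F')}$ to $\calO_{\ga_0,F'}$ is trivial as an $\ell$-adic local system. Each component of $\calO_{\ga_0,F'}$ is a single $L_z^+G_{F'}$-orbit whose stabilizer is connected by \cite[Lemme 2.3]{NP}, hence is geometrically connected over $F'$; consequently for each such component $C$ the canonical map $\pi_1(C)\onto\Gal(\sF/F')$ is surjective, forcing $V|_{\Spec(F')}$ itself to be trivial as a $\Gal(\sF/F')$-representation. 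Thus $V$ is trivialized over $F'$. The main subtle point is the bookkeeping of Tate twists in the passage between the two parametrizations $i_*j_{!*}(W[\dim\calO_{\ga_0}])$ and $\IC_{\ga_0}\otimes V$ of the same simple object; once this is sorted, the rest is standard perverse sheaf yoga combined with the Ng\^o--Polo connectedness input.
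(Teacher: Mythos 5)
Your argument is essentially the paper's proof, with two places where you make explicit what the paper leaves compressed: the Tate-twist bookkeeping in passing from the $i_*j_{!*}(W[\dim\calO_{\ga_0}])$-presentation of Lemma \ref{gen1} to the $\IC_{\ga_0}\otimes V$-form (the paper silently overloads the letter $V$ there), and the passage, in the $(\Rightarrow)$ direction, from ``the pullback of $V$ to $\calO_{\ga_0,F'}$ is trivial'' to ``$V|_{\Spec(F')}$ is trivial,'' which you justify via the Ng\^o--Polo connectedness of stabilizers and surjectivity of $\pi_1(C)\to\Gal(\sF/F')$. Both clarifications are welcome and correct in spirit.

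One small inaccuracy in the $(\Rightarrow)$ direction: you assert that \emph{every} connected component of $\calO_{\ga_0,F'}$ is geometrically connected over $F'$. This need not hold for an arbitrary $F'$: $\calO_{\ga_0,\sF}$ is a disjoint union of $L_z^+G_\sF$-orbits permuted transitively by $\Gal(\sF/F)$, and a $\Gal(\sF/F')$-orbit of several of them gives a component $C$ of $\calO_{\ga_0,F'}$ with $C_\sF$ disconnected, so $\pi_1(C)\to\Gal(\sF/F')$ is then not surjective. The fix is harmless: either enlarge $F'$ so that $\Gal(\sF/F')$ acts trivially on $\pi_0(\calO_{\ga_0,\sF})$ (after which each component is a single geometrically connected orbit and your surjectivity argument applies), or note that in general the image of $\pi_1(C)\to\Gal(\sF/F')$ is $\Gal(\sF/F'')$ for a finite extension $F''$ (the constant field of $C$), so the trivialness of the pullback still forces $V$ to be trivial over the finite extension $F''$, which is all the lemma requires. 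With that adjustment the proof is complete and matches the paper's route.
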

\begin{proof}
Let $\calA=i_*j_{!*}(V[\dim(\calO_\ga)])$ be simple for some $\ga\in J$. Assume that $\calA\in\Sat_G$. Then there exists $F'/F$ finite such that $^p\!H^0(\iota^*\calA)=V[\dim(\calO_\ga)]$ is constant over $F'$ for $\iota\colon \calO_\ga\to\Gr_G$, i.e. $V[\dim(\calO_\ga)]=\algQl\lan \dim(\calO_\ga)\ran\otimes V$ where $V$ is a local system that is trivial over $F'$. Since the middle perverse extension commutes with smooth morphisms, we obtain $\calA\simeq\IC_\ga\otimes V$. The converse follows from the fact that $^p\!H^0(\iota^o_{\ga'}\calA)=0$, unless $\ga'=\ga$ and in this case $^p\!H^0(\iota_\ga^o\calA)=V_0[\dim(\calO_\ga)]$ for both restrictions $\iota_\ga^o=\iota_\ga^*$ and $\iota_\ga^o=\iota_\ga^!$.
\end{proof}

We recall from \cite{Ri2} that the category $P_{L_z^+G}(\Gr_G)$ is equipped with a symmetric monoidal structure with respect to the convolution product $\star$ uniquely determined by the property that the global cohomology functor $\om\colon  P_{L_z^+G}(\Gr_G)\to \vs_\algQl$ is symmetric monoidal, cf. \eqref{fiberfun1} for the definition of $\om$.

Recall the classical geometric Satake isomorphism, first over $\sF$. The tuple $(\Sat_{G,\sF},\star)$ is a neutralized Tannakian category with fiber functor $\om_\sF$, and the group of tensor automorphisms $\hat{G}=\Aut^{\star}(\om_\sF)$ is a connected reductive group over $\algQl$ whose root datum is dual to the root datum of $G_\sF$ in the sense of Langlands. 

Now for arbitrary $F$, it is shown in \cite[Appendix]{RZ} that for any object $\calA\in\Sat_G$ the $\Ga$-action on $\om(\calA)$ factors through a finite quotient, cf. the Tate twist in \eqref{fiberfun1}. Hence, $\Ga$ acts on $\hat{G}$ via a finite quotient, and we may form $\LG=\hat{G}\rtimes \Ga$ considered as a pro-algebraic group over $\algQl$ with neutral component $\hat{G}$. In this way, for every $\calA\in \Sat_G$, the cohomology $\om(\calA)$ is an algebraic representation of the affine group scheme $\LG$. Denote by $\Rep_{\algQl}(\LG)$ (resp. $\Rep_{\algQl}(\hat{G})$) the tensor category of algebraic representations of $\LG$ (resp. $\hat{G}$) over $\algQl$.
 
\begin{thm}[\text{[34, Appendix]}] \label{GeoSat1} i) The category $\Sat_G$ is stable under the convolution product, and $(\Sat_G,\star)$ is a semi-simple abelian tensor subcategory of $(P_{L_z^+G}(\Gr_G),\star)$.\smallskip\\
ii) The base change to $\sF$ defines a tensor functor $(\str)_\sF\colon  (\Sat_G,\star)\longto (\Sat_{G,\sF},\star)$, and the following diagram of functors between abelian tensor categories 
\[
\begin{tikzpicture}[baseline=(current  bounding  box.center)]
\matrix(a)[matrix of math nodes, 
row sep=1.5em, column sep=2em, 
text height=1.5ex, text depth=0.45ex] 
{(\Sat_G,\star)&(\Sat_{G,\sF},\star) \\ 
(\Rep_{\algQl}(\LG),\otimes)&(\Rep_{\algQl}(\hat{G}),\otimes) \\}; 
\path[->](a-1-1) edge node[above] {$(\str)_{\sF}$} (a-1-2);
\path[->](a-2-1) edge node[above] {$\res$} (a-2-2);
\path[->](a-1-1) edge node[right] {$\om$} (a-2-1);
\path[->](a-1-2) edge node[right] {$\om_{\sF}$} (a-2-2);
\end{tikzpicture}
\]
is commutative up to natural isomorphism, where $\res$ denotes the restriction of representations along $\hat{G}\hookto \LG$.
\end{thm}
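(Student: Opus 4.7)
The plan is to reduce both parts to the unramified geometric Satake equivalence over $\sF$ by means of Galois descent, using the explicit description of the simple objects of $\Sat_G$ provided by Lemma \ref{simpobj}. Concretely, every simple object of $\Sat_G$ becomes, after base change to a suitable finite separable extension $F'/F$, a direct sum of shifted-twisted intersection complexes $\IC_\ga$ with constant coefficients, and the assertions will follow by combining this fact with the known tensor structure over $\sF$.

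For the stability of $\Sat_G$ under $\star$ in (i), I would take $\calA_1,\calA_2\in\Sat_G$, decompose each into simples via Lemma \ref{simpobj}, and enlarge $F'/F$ finite separable so that every constituent local system trivializes over $F'$. Since the convolution product is defined via a fusion-type twisted product diagram which commutes with base change, one has $(\calA_1\star\calA_2)_{F'}\simeq \calA_{1,F'}\star\calA_{2,F'}$; by the geometric Satake equivalence over $\sF$ (applied to the further base change, which factors through $F'$, and componentwise to summands with constant coefficients) the result is a direct sum of copies of $\IC_\ga\lan m\ran$. Using that the $L_z^+G_\sF$-orbit stabilizers are connected (Lemma \ref{gen1}), the restrictions $^p\!H^0(\iota_\ga^*(\calA_1\star\calA_2))$ and $^p\!H^0(\iota_\ga^!(\calA_1\star\calA_2))$ are constant on $\calO_\ga$ over $F'$, so $\calA_1\star\calA_2\in\Sat_G$. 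For the abelian and semi-simple structure, it suffices to show that $\Sat_G$ is stable under taking direct summands inside $P_{L_z^+G}(\Gr_G)$: any summand of $\calA\in\Sat_G$ is itself a direct sum of simple summands of $\calA$, and therefore inherits both semi-simplicity and the trivialization condition over the same $F'$.

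For (ii), the base change $(-)_\sF$ is a tensor functor by the same base-change compatibility of $\star$. The natural isomorphism in the square decomposes into two compatibilities. On underlying vector spaces one has $\om_\sF(\calA_\sF)\simeq \om(\calA)$ by proper base change applied to the ind-proper structure morphism $\Gr_G\to\Spec(F)$. On $\hat{G}$-actions, the $\LG$-structure on $\om(\calA)$ is set up so that the restriction of the $\LG$-action to $\hat{G}\subset\LG$ is precisely the tensor-automorphism action of $\hat{G}=\Aut^\star(\om_\sF)$ on $\om_\sF(\calA_\sF)$; this is the content of the discussion preceding the theorem, and it identifies $\res\circ\om$ with $\om_\sF\circ(-)_\sF$ as tensor functors to $\Rep_{\algQl}(\hat{G})$.

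The principal technical point will be the careful bookkeeping of base change and Galois equivariance for the convolution product over the possibly non-closed field $F$, together with the verification that the $\LG$-action on $\om(\calA)$ fits consistently with the Galois action so that $\om\colon\Sat_G\to\Rep_{\algQl}(\LG)$ is a bona fide tensor functor. Granted these structural inputs (already implicit in \cite{Ri2} and \cite[Appendix]{RZ}), the rest of the argument is essentially formal.
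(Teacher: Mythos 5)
The paper does not actually give a proof of Theorem \ref{GeoSat1}: it is stated without a proof environment and is essentially a packaging of results from \cite{Ri2} and \cite[Appendix]{RZ}, which are invoked in the two paragraphs immediately preceding the theorem (the symmetric monoidal structure on the whole of $P_{L_z^+G}(\Gr_G)$, and the finiteness of the Galois action on $\om(\calA)$ for $\calA\in\Sat_G$). Your blind reconstruction is therefore not being compared against an explicit argument in the text, but against the implicit reliance on those two references.

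With that caveat, your approach -- decompose into simples via Lemma \ref{simpobj}, trivialize all constituent local systems over a common finite separable extension $F'/F$, and then invoke the Satake equivalence after base change -- is the right one, and is consistent with what the paper delegates to \cite{Ri2}. Two points in your argument are glossed over a little too quickly, however. First, you assert that $(\calA_1\star\calA_2)_{F'}\simeq \calA_{1,F'}\star\calA_{2,F'}$ is a direct sum of $\IC_\ga\lan m\ran$ by applying the geometric Satake equivalence ``over $\sF$.'' What you actually obtain this way is a decomposition of $(\calA_1\star\calA_2)_\sF$, and passing from $\sF$ to $F'$ requires an argument: one either invokes the Satake equivalence over the (not necessarily separably closed) field $F'$ directly -- this is precisely what \cite{Ri2} provides, working over an arbitrary base field -- or one separately argues that the $\sF$-decomposition descends along $\sF/F'$ (for example via vanishing of $\Ext^1$ after base change). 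Second, you never address why $\calA_1\star\calA_2$ is perverse in the first place; this is the genuinely hard step of the unramified theory (semi-infinite orbits / MV fibres) and is also outsourced to \cite{Ri2}, where the claim that $\star$ preserves $P_{L_z^+G}(\Gr_G)$ for general $F$ is established. Since you do flag ``these structural inputs (already implicit in \cite{Ri2} and \cite[Appendix]{RZ})'' at the end, I regard the proposal as correct and matching the paper's intended derivation, but the two descent points above deserve to be said explicitly rather than implied: the statement that $\Sat_{G,F'}$ is closed under $\star$ -- with $F'$ not separably closed -- is not a formal consequence of the $\sF$-statement, and it is exactly the place where the ``arbitrary ground field'' formulation in \cite{Ri2} is doing real work.

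Your treatment of (ii) is fine: $\om_\sF\circ(\str)_\sF\simeq\om$ via proper base change, the finiteness of the $\Ga$-action (from \cite[Appendix]{RZ}) is what allows the semidirect product $\LG=\hat G\rtimes\Ga$ to be formed as a pro-algebraic group, and the commutativity of the square is then the compatibility of the Tannakian structure over $F$ with its base change.
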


\begin{cor}\label{satcor} Let $\calA\in \Sat_G$. Then the Galois group acts trivially on $\om(\calA)$ if and only if $\calA$ is a direct sum of $\IC_\ga$ for $\ga\in J$ such that $\calO_{\ga,\sF}$ is connected. 
\end{cor}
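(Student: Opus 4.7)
The plan is to use semi-simplicity of $\Sat_G$ to reduce to simple objects, then to track the Galois action by base change to $\sF$.

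First, by Theorem \ref{GeoSat1}(i) the category $\Sat_G$ is semi-simple, so Lemma \ref{simpobj} gives a decomposition $\calA \simeq \bigoplus_i \IC_{\ga_i}\otimes V_i$ with each $V_i$ a simple local system on $\Spec(F)$ trivialized over some finite extension. Since $\om$ is additive and tensor products of Galois representations are well-behaved, $\om(\calA) = \bigoplus_i \om(\IC_{\ga_i})\otimes V_i$ as $\Ga$-representations. Because the $\Ga$-action on a direct sum of nonzero summands is trivial iff it is trivial on every summand, this reduces the claim to showing: for a simple $\IC_\ga\otimes V$, the $\Ga$-action on $\om(\IC_\ga\otimes V)$ is trivial iff $V$ is the trivial local system and $\calO_{\ga,\sF}$ is connected.

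Next I would analyze $\om$ on a single simple summand via base change. Writing the orbit decomposition $\calO_{\ga,\sF} = \coprod_{\mu\in \ga}\calO_\mu$, where $\ga$ is a $\Ga$-orbit on $L_z^+G_\sF$-orbits, we obtain $\IC_{\ga,\sF} = \bigoplus_{\mu\in \ga}\IC_\mu$ and hence $\om(\IC_\ga\otimes V) = \bigoplus_{\mu\in \ga}\om_\sF(\IC_\mu)\otimes V_\sF$ as a vector space. The $\Ga$-action on this has two layers: a permutation of the summands through the action of $\Ga$ on $\ga$, and on each $\om_\sF(\IC_\mu)\otimes V_\sF$ the action of the stabilizer $\Ga_\mu$. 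Concretely this exhibits $\om(\IC_\ga\otimes V)$ as an induced representation $\Ind_{\Ga_\mu}^{\Ga}(\om_\sF(\IC_\mu)\otimes V_\sF)$, compatible via the diagram of Theorem \ref{GeoSat1} with the induced-representation structure of the corresponding $\LG$-module.

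For the analysis: an induced representation $\Ind_{\Ga_\mu}^{\Ga}(W)$ has trivial $\Ga$-action iff $\Ga_\mu = \Ga$ (equivalently $|\ga|=1$, i.e.\ $\calO_{\ga,\sF}$ is connected) and $W$ itself carries the trivial $\Ga$-action. For the forward direction, triviality of the permutation immediately forces $|\ga|=1$; for the backward direction, when $\ga=\{\mu\}$ with $\mu$ Galois-fixed, the remaining factor $\om_\sF(\IC_\mu)\otimes V_\sF$ carries the $\Ga$-action prescribed by the $\LG$-structure of Theorem \ref{GeoSat1}, which is trivial on $\om_\sF(\IC_\mu)$ (see below) so that triviality reduces to triviality of $V$. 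Since $V$ is simple, this forces $V\simeq \algQl$ and we recover exactly $\IC_\ga$ with $\calO_{\ga,\sF}$ connected.

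The main obstacle is justifying that the $\Ga$-action on $\om_\sF(\IC_\mu)$ is trivial for Galois-fixed $\mu$. This is where the Tate-twist normalization in the definition of $\om$ plays a decisive role: for $\mu$ Galois-fixed, the closure $\olO_\mu$ is defined over $F$ and paved by Iwahori-type cells which over $F$ are successive extensions of affine spaces, so the associated cohomology is a direct sum of Tate twists. After the twists $(i/2)$ built into $\om$, Frobenius acts by $1$ on each piece, giving triviality of the $\Ga$-action. This fact (already used in \cite[Appendix]{RZ} to show the $\Ga$-action factors through a finite quotient) is precisely what makes the Tannakian translation of the corollary clean: under the equivalence induced by $\om$, simple $\LG$-representations with trivial restriction to $\Ga$ are exactly the $V_\mu$ with $\mu$ Galois-fixed equipped with trivial extension, matching the claimed class of $\IC_\ga$.
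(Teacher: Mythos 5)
Your proof is correct and follows essentially the same route as the paper: reduce to simple objects via Lemma \ref{simpobj}, analyze the $\Ga$-action on $\om(\IC_\ga\otimes V)$ via base change to $\sF$, and invoke the triviality of the twisted $\Ga$-action from \cite[Appendix]{RZ} for Galois-fixed orbits. The paper states the forward implication (``if $\Ga$ acts trivially then $V$ is trivial and $\calO_{\ga,\sF}$ is connected'') without further argument; your explicit identification of $\om(\IC_\ga\otimes V)$ as $\Ind_{\Ga_\mu}^{\Ga}(\om_\sF(\IC_\mu)\otimes V_\sF)$, together with the standard criterion for triviality of an induced representation, is the natural justification of that step and is a welcome addition of detail.
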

\begin{proof}
We may assume that $\calA$ is simple, and hence $\calA=\IC_\ga\otimes V$ for some $\ga\in J$ and some local system $V$ on $\Spec(F)$ by Lemma \ref{simpobj}. If $\Ga$ acts trivially on $\om(\calA)$, then $V$ is trivial, and $\calO_{\ga,\sF}$ is connected. Conversely, if $\calA=\IC_\ga$ for $\ga\in J$ with $\calO_{\ga,\sF}$ connected, then $\Ga$ acts trivial on $\om(\calA)$ by \cite[Appendix]{RZ}, cf. the Tate twist in \eqref{fiberfun1}.
\end{proof}

\begin{rmk}\label{pinrem}
i) For an interpretation of the whole abelian tensor category $(P_{L_z^+G}(\Gr_{G}),\star)$ in terms of the dual group see \cite[\S 5]{Ri2}.\smallskip\\
ii) The group of tensor automorphisms $\hat{G}$ admits a canonical pinning $(\hat{G},\hat{B},\hat{T},\hat{X})$, cf. Appendix \ref{fixpointapp} for the definition of a pinning. Moreover, the action of $\Ga$ on $\hat{G}$ is via pinned automorphisms. As explained in \S 4 of \cite{RZ}, the canonical pinning is constructed as follows. The cohomological grading on $\om$ defines a one parameter subgroup $\bbG_m\to \hat{G}$, and the centralizer $\hat{T}$ is a maximal torus. Let $\calL$ be an ample line bundle on $\Gr_G$. Then its isomorphism class $[\calL]\in \Pic(\Gr_G)$ is unique. Cup product with the first Chern class $c_1([\calL])\in H^2(\Gr_{G,\sF},\bbZ_\ell(1))$ defines a principal nilpotent element $\hat{X}\in \Lie(\hat{G})$. This in turn determines the Borel subgroup $\hat{B}$ with $\hat{T}\subset \hat{B}$ and $\hat{X}\in \Lie(\hat{B})$ uniquely. Since the Galois group $\Ga$ fixes the cohomological grading and $[\calL]$, it acts on $\hat{G}$ via pinned automorphisms.
\end{rmk}

\subsection{The ramified Satake category}\label{ramsatcatpara} Let $k$ be a finite field, and let $G$ be a connected reductive group over the Laurent power series field $F=k\rpot{t}$. Let $\fraka$ be a facet in the Bruhat-Tits building $\scrB(G,F)$, and denote by $\calG=\calG_\fraka$ the associated parahoric group scheme over $\calO_F$.

There is the convolution product, cf. \cite{Ga}, \cite{PZ}
\[
\str\star\str\colon P(\Fl_\fraka)\times P_{L^+\calG}(\Fl_\fraka)\longto D_c^b(\Fl_\fraka,\algQl).
\]
Note that $P_{L^+\calG}(\Fl_\fraka)$ is not stable under $\star$ in general, i.e. the convolution of two perverse sheaves need not to be perverse again. For the preservation of perversity we need a hypothesis on $\fraka$. 

For the rest of the section, let $\fraka$ be a very special facet, cf. Definition \ref{veryspecialdfn}.

\begin{dfn}
The \emph{ramified Satake category $\Sat_{\fraka,\os}$ over $\os$} is the category $P_{L^+\calG_\os}(\Fl_{\fraka,\os})$.
\end{dfn}

\begin{rmk}
The connection with \S 2.1 is as follows. The choice of a hyperspecial facet $\fraka$ is equivalent to the choice of a Chevalley model of $G$ over $\calO_F$. In this case, the BD-Grassmannian $\Gr_\fraka$ is constant over $S$, and the nearby cycles functor $\bar{\Psi}_{\fraka}\colon \Sat_{G,\oeta}\to \Sat_{\fraka,\os}$ is an equivalence of tensor categories, cf. the proof of Theorem \ref{Zhu1} below.
\end{rmk}

A version of $\Sat_{\fraka,\os}$ with Galois action is defined as follows. For a finite intermediate extension $F\subset F'\subset \sF$, let $(S',\bar{S},\eta',\oeta,s',\os)$ be the associated $6$-tuple with Galois group $\Ga'=\Gal(\sF/F')$. Then there is the functor
\[\res_{F'/F}\colon  P_{L^+\calG}(\Fl_\fraka\times_s\eta)\;\longto\; P_{L^+\calG}(\Fl_\fraka\times_{s'}\eta')\]
given by restricting the Galois action from $\Ga$ to the subgroup $\Ga'$. Furthermore, there is the functor
\[(\str)_{\os}\colon  P_{L^+\calG}(\Fl_\fraka)\;\longto\; P_{L^+\calG}(\Fl_\fraka\times_s\eta)\]
given by pullback along $\Fl_{\fraka,\os}\to \Fl_\fraka$. Note that $(\str)_\os$ is fully faithful with essential image consisting of the objects $\calA\in P_{L^+\calG}(\Fl_\fraka\times_s\eta)$ such that the inertia acts trivially.

\begin{dfn} 
The \emph{ramified Satake category $\Sat_\fraka$ over $s$} is the full subcategory of objects $\calA\in P_{L^+\calG}(\Fl_\fraka\times_s\eta)$ with the property that there exists a finite separable extension $F'/F$ such that \smallskip\\
\phantom{hallo}a) the inertia $I'\subset \Ga'$ acts trivially on $\res_{F'/F}(\calA)$, and \smallskip\\
\phantom{hallo}b) the perverse sheaf $\res_{F'/F}(\calA)\in P_{L^+\calG}(\Fl_\fraka)$ is semi-simple and pure of weight $0$.
\end{dfn}

We denote by $\om_s\colon  \Sat_\fraka\to\vs_\algQl$ the global cohomology, with Tate twists included, as in \eqref{fiberfun1}, and likewise $\om_\os\colon  \Sat_{\fraka,\os}\to\vs_\algQl$. Since the Galois group $\Ga$ acts via a finite quotient on $\hat{G}=\Aut^\star(\om)$, we may consider the invariants $\hat{G}^I$ under the inertia group. Then $\hat{G}^I\subset \hat{G}$ is a reductive subgroup which is not connected in general. The group $\Ga$ operates on $\hat{G}^I$, and we form the semi-direct product $\LG_{\on{r}}=\hat{G}^I\rtimes\Ga$, considered as a pro-algebraic group over $\algQl$. Hence, $\LG_{\on{r}}\hookto {\LG}$ is a closed subgroup scheme. 

Recall that there is the nearby cycles functor $\Psi_\fraka\colon P_{L_z^+G}(\Gr_G)\to P_{L^+\calG}(\Fl_\fraka)$ associated with $\fraka$, cf. \S \ref{monpara}.

\begin{thm}\label{Zhu1}
Let $\fraka$ be very special. \smallskip\\
i) The category $\Sat_\fraka$ is semi-simple and stable under the convolution product $\star$.\smallskip\\
ii) If $\calA\in\Sat_G$, then $\Psi_\fraka(\calA)\in \Sat_\fraka$, and the pair $(\Sat_\fraka,\star)$ admits a unique structure of a symmetric monoidal category such that $\Psi_\fraka\colon  (\Sat_G,\star)\to (\Sat_\fraka,\star)$ is symmetric monoidal. \smallskip\\
iii) The following diagram of functors of abelian tensor categories 
\[
\begin{tikzpicture}[baseline=(current  bounding  box.center)]
\matrix(a)[matrix of math nodes, 
row sep=1.5em, column sep=2em, 
text height=1.5ex, text depth=0.45ex] 
{(\Sat_G,\star)&(\Sat_\fraka,\star) \\ 
(\Rep_{\algQl}(\LG),\otimes)&(\Rep_{\algQl}({\LG}_{\on{r}}),\otimes) \\}; 
\path[->](a-1-1) edge node[above] {$\Psi_\fraka$} (a-1-2);
\path[->](a-2-1) edge node[above] {$\res$} (a-2-2);
\path[->](a-1-1) edge node[right] {$\om$} (a-2-1);
\path[->](a-1-2) edge node[right] {$\om_s$} (a-2-2);
\end{tikzpicture}
\]
is commutative up to natural isomorphisms, and the vertical arrows are equivalences.
\end{thm}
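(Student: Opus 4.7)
My plan is to prove the theorem in three main stages, following the strategy of Zhu \cite{RZ} but leveraging the characterizations of very special facets from \S\ref{monpara} to remove the tameness hypothesis.

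\emph{Stage 1: $\Psi_\fraka$ lands in $\Sat_\fraka$.} Given $\calA\in\Sat_G$, the complex $\Psi_\fraka(\calA)$ lies in $P_{L^+\calG}(\Fl_\fraka\times_s\eta)$ by Lemma \ref{nearlem}. By Theorem \ref{GeoSat1} the Galois action on $\om(\calA)$ factors through a finite quotient, so after a finite base change $F'/F$ the inertia acts trivially on $\om(\res_{F'/F}\calA)$, verifying condition (a). For condition (b): since $\fraka$ is very special, the facet $\nfraka\subset\scrB(G,\nF)$ is special, so Theorem \ref{monodromy1} (iii) gives semi-simplicity of $P_{L^+\calG}(\Fl_{\fraka,\bar s})$ and hence of $\res_{F'/F}(\Psi_\fraka\calA)_{\os}$. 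By Proposition \ref{monodromy2} the monodromy on $\res_{F'/F}\Psi_\fraka(\calA)$ vanishes and this perverse sheaf is pure of weight $0$. This shows $\Psi_\fraka(\calA)\in\Sat_\fraka$, and semi-simplicity of $\Sat_\fraka$ itself follows directly from the definition (pure weight-zero perverse sheaves form a semi-simple abelian category).

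\emph{Stage 2: Monoidal structure and essential surjectivity of $\Psi_\fraka$ onto simple objects.} Following Gaitsgory \cite{Ga} and Pappas-Zhu \cite{PZ}, the convolution product on $\Sat_G$ is realized via a two-leg variant of the BD-Grassmannian over $X^2$, which admits a factorization isomorphism away from the diagonal (exchanging fusion $\tilde\boxtimes$ and convolution $\star$). Pulling back to $S\times S$ and applying nearby cycles, one obtains a monoidal structure on $\Sat_\fraka$ with respect to which $\Psi_\fraka$ is monoidal, and whose commutativity constraint comes from the $S_2$-action swapping the two marked points. Crucially, the vanishing of monodromy on $\Psi_\fraka$ (Stage 1 combined with Proposition \ref{monodromy2}) is what allows this commutativity constraint to descend to the special fiber without Koszul signs. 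Essential surjectivity of $\Psi_\fraka$ on simple objects is immediate: for every $B$-dominant $\mu\in X_*(T)$, Corollary \ref{abfall} (v) (which applies since $\fraka$ is very special) shows $M_{\mu,s}$ is irreducible, so by \cite[Lemma 7.1]{Z} the complex $\Psi_\fraka(\IC^G_\mu)$ has full support $M_{\mu,s}$ and, being semi-simple $L^+\calG$-equivariant, contains $\IC_{\bar\mu}$ as a summand. This also gives stability of $\Sat_\fraka$ under $\star$ (since the image of $\Psi_\fraka$ generates $\Sat_\fraka$ as a full subcategory closed under subquotients), completing parts i) and ii).

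\emph{Stage 3: Tannakian identification.} The pair $(\Sat_\fraka,\star)$ with fiber functor $\om_s$ is Tannakian, with tensor automorphism group $H=\Aut^\star(\om_s)$, a pro-algebraic group over $\algQl$. The standard comparison $\om_s\circ\Psi_\fraka\simeq\om$ of cohomology (nearby cycles preserve cohomology) gives a homomorphism $H\to\LG=\hat G\rtimes\Ga$ factoring through $\LG_{\on{r}}=\hat G^I\rtimes\Ga$ (since inertia acts trivially on $\om_s$). By Theorem \ref{monodromy1} (iii), $\Sat_{\fraka,\os}$ is semi-simple, so $H^0$ is reductive, and by Remark \ref{pinrem} the group $\hat G$ carries a canonical pinning for which $I$ acts through pinned automorphisms. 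Using Appendix \ref{fixpointapp}, the irreducible representations of $\hat G^I$ are parametrized by $X^*(\hat T^I)^+=X_*(T)_I^+$, which matches the parametrization of simple objects of $\Sat_\fraka$ by $\IC_{\bar\mu}$. A dimension count matching $\om_s(\IC_{\bar\mu})$ with the highest-weight representation of $\hat G^I$ of weight $\bar\mu$ (via the semi-small weight decomposition on Mirković-Vilonen-type cycles in $\Fl_\fraka$) then forces $H\to\LG_{\on{r}}$ to be an isomorphism, giving the equivalence of iii) and the commutativity of the square.

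\emph{Main obstacle.} The hardest step is the final matching of $H$ with $\LG_{\on{r}}$, specifically verifying that the Satake group in the ramified setting is exactly $\hat G^I$ rather than, a priori, some other form of the fixed-point subgroup. This requires the highest-weight theory for non-connected reductive groups (Appendix A) together with a careful analysis of the characters of $\om_s(\IC_{\bar\mu})$ as $\hat T^I$-representations, controlled by the irreducibility of $M_{\mu,s}$ for very special $\fraka$. Without very specialness the argument breaks down at multiple points (semi-simplicity fails, monodromy is non-trivial, $M_{\mu,s}$ is reducible), which is precisely why Theorem B and Corollary \ref{abfall} are the key inputs replacing tameness in Zhu's original argument.
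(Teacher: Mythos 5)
Your proposal follows the same overall route as the paper (Zhu's two-stage argument, with Theorem~\ref{monodromy1}, Proposition~\ref{monodromy2} and Corollary~\ref{abfall} serving as the inputs that replace the tameness hypothesis), and Stages~1 and~2 line up reasonably well with the paper's ``geometric equivalence'' step: the paper shows the IC of $Y_{t^{\bar\mu}}$ appears with multiplicity one in $\bar\Psi_\fraka(\IC_\mu)$ using Corollary~\ref{globorbit} and Lemma~\ref{increlation} rather than the full irreducibility statement of Corollary~\ref{abfall}(v), but this is a cosmetic difference. Two points deserve scrutiny, however.

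First, the claim in Stage~1 that semi-simplicity of $\Sat_\fraka$ ``follows directly from the definition (pure weight-zero perverse sheaves form a semi-simple abelian category)'' is too fast. The objects of $\Sat_\fraka$ carry a $\Ga$-compatibility datum, and Gabber's decomposition theorem gives geometric semi-simplicity; that morphisms and extensions in the category $P_{L^+\calG}(\Fl_\fraka\times_s\eta)$ behave semi-simply is precisely what the paper \emph{deduces} from the Tannakian equivalence with $\Rep_{\algQl}(\LG_{\mathrm{r}})$ during the Galois-descent step, rather than what it assumes.

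Second, the real heart of Stage~3 --- identifying $H=\Aut^\star(\om_\os)$ with $\hat{G}^I$ rather than some smaller reductive subgroup --- is where your sketch departs from what the paper actually does. Your ``inertia acts trivially on $\om_s$'' does not by itself give $H\subset\hat{G}^I$; the paper instead uses the $I$-action on $(\Sat_{G,\oeta},\om_\oeta)$ induced from the $I$-action on $\Gr_{G,\oeta}\to\Gr_{G,\breve\eta}$ and the $I$-invariance of $\bar\Psi_\fraka$ (Zhu's Lemma~4.5). And the proposed ``dimension count via Mirkovi\'c--Vilonen-type cycles in $\Fl_\fraka$'' is not the paper's argument and is dubious here: MV cycles are not developed for twisted affine flag varieties, and nothing in the paper supplies a semi-small weight decomposition in the special fiber. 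The paper's actual closing move is combinatorial and representation-theoretic: Lemma~\ref{keylem} shows $\pi_0(\hat{T}^I)\simeq\pi_0(\hat{G}^I)$, Corollary~\ref{highweight} supplies the highest-weight classification of $\Rep(\hat{G}^I)$, and matching these parametrizations with the simple objects $\IC_{\bar\mu}$ of $\Sat_{\fraka,\os}$ (as in Zhu's Lemma~4.10) forces $H=\hat{G}^I$. You should replace the MV-cycle appeal by this $\pi_0$ plus highest-weight argument.
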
 
\begin{proof}
We explain the modifications in Zhu's proof of Theorem \ref{Zhu1}.\smallskip\\
\emph{The geometric equivalence:} Let $\bar{S}=\Spec(\calO_\sF)$, and consider the base change $\Gr_{\fraka,\bar{S}}=\Gr_\fraka\times_S\bar{S}$. Let $\Sat_{G,\oeta}=P_{L_z^+G_\oeta}(\Gr_{G,\oeta})$ (resp. $\Sat_{\fraka,\os}=P_{L^+\calG_\os}(\Fl_{\fraka,\os})$) be the Satake category over $\oeta$ (resp. $\os$). Recall that there is the nearby cycles functor, cf. \S \ref{monpara}
\[\bar{\Psi}_\fraka\colon  \Sat_{G,\oeta}\;\longto\; \Sat_{\fraka,\os}.\] 
We go through the arguments in Zhu's paper \cite{RZ}. \smallskip\\
a) The category $\Sat_{\fraka,\os}$ is semi-simple and stable under the convolution product $\star$. In particular, the pair $(\Sat_{\fraka,\os},\star)$ is a monoidal category.\smallskip\\
The category $P_{L^+\calG_\os}(\Fl_{\fraka,\os})$ is semi-simple by Theorem \ref{monodromy1} iii) (Lemma 1.1 in [\emph{loc. cit.}]). We show that it is stable under convolution. Let $\calA\in \Sat_{G,\oeta}$. The monodromy of $\bar{\Psi}_\fraka(\calA)$ is trivial by Proposition \ref{monodromy2} iii) (Lemma 2.3 in [\emph{loc. cit.}]). As in the proof of Lemma \ref{Gabberlem} this implies the formula
\begin{equation}\label{important}
\bar{\Psi}_\fraka(\calA)\;\simeq\;\bar{i}^*j_{!*}(\calA),
\end{equation}
which is Corollary 2.5 of [\emph{loc. cit.}]. Hence, \eqref{important} holds for all $\calA\in\Sat_{G,\oeta}$. Let $\mu\in X_*(T)$ be dominant with respect to some $F$-rational Borel subgroup of $G$. Note that $G$ is quasi-split by Lemma \ref{quasisplitlem}. Let $M_\mu$ be the global Schubert variety, and let $\IC_\mu$ be the intersection complex on $M_{\mu,\oeta}$. Then the intersection complex $\IC_{t^{\bar{\mu}}}$ on the Schubert variety $Y_{t^{\bar{\mu}}}$ in the special fiber appears with multiplicity $1$ in $\bar{\Psi}_\fraka(\IC_\mu)$ (Lemma 2.6 in [\emph{loc. cit.}]). This follows from the compatibility of nearby cycles along smooth morphisms applied to the open immersion $\mathring{M}_\mu\hookto M_\mu$, cf. Corollary \ref{globorbit} and Lemma \ref{increlation}. We claim that Proposition 2.7 [\emph{loc. cit.}] holds, i.e. for $\calA\in \Sat_{G,\oeta}$ and $\calB\in \Sat_{\fraka,\os}$, there is a canonical isomorphism $\bar{\Psi}_\fraka(\calA)\star\calB\simeq\calB\star\bar{\Psi}_\fraka(\calA)$ and both are objects in $\Sat_{\fraka,\os}$. The Zhu's proof carries over word by word in replacing $(\bbA^1_k,0)$ by the pointed curve $(X,x)$ with $\calO_x=\calO_F$, cf. \S \ref{globSchub}: Let $\Gr^{\rm BD}_X$ be the contravariant functor on the category of $k$-schemes parametrizing isomorphism classes of triples $(y,\calF,\al)$ with
\[
\begin{cases}
y\colon T\to X\; \text{is a morphism of $k$-schemes};\\
\calF \;\text{a right $\calG_T$-torsor on $X_T$};\\
\al\colon \calF_{X_T\bslash\{\Ga_x\cup \Ga_y\}}\overset{\simeq}{\longto}\calF^0|_{X_T\bslash\{\Ga_x\cup\Ga_y\}}\;\text{a trivialization},
\end{cases}
\]
where $\calF^0$ is the trivial torsor. There is the projection $\Gr_X^{\rm BD}\to X$ and we denote 
\[\Gr^{\rm BD}\defined \Gr^{\rm BD}\times_X S,\]
which is representable by an ind-projective strict ind-scheme. Moreover, on fibers
\[\Gr^{\rm BD}_\oeta \simeq \Gr_{G,\oeta}\times_{\os} \Fl_{\fraka,\os} \;\;\;\;\;\;\text{and}\;\;\;\;\;\;\; \Gr^{\rm BD}_s\simeq \Fl_{\fraka,\os}.\]
Denote $\Psi^{\rm BD}: P(\Gr_{G,\oeta})\to P(\Fl_{\fraka,\os})$ the nearby cycles functor. Consider the perverse sheaf $\calA\boxtimes \calB$ on $\Gr_\oeta^{\rm BD}$. Then $\Psi^{\rm BD}(\calA\boxtimes \calB)$ is $L^+\calG_{\os}$-equivariant, i.e. an object in $\Sat_{\fraka, \os}$ and hence semi-simple. Thus the monodromy vanishes by the Proof of Proposition \ref{monodromy2}, and as in the Proof of \cite[Theorem 7.3]{Z} one shows that there are canonical isomorphisms
\[\bar{\Psi}_\fraka(\calA)\star\calB\;\simeq\; \Psi^{\rm BD}(\calA\boxtimes \calB) \;\simeq\; \bar{i}^*j_{!*}(\calA\boxtimes \calB).\]
Finally, the $\bbZ/2$-action on $\Gr^{\rm BD}_\oeta$ by switching the factors gives a canonical isomorphism
\[\bar{i}^*j_{!*}(\calA\boxtimes \calB)\;\simeq\; \bar{i}^*j_{!*}(\calB\boxtimes \calA)\;\simeq\; \Psi^{\rm BD}(\calB\boxtimes \calA) \;\simeq\; \bar{\Psi}_\fraka(\calA)\star\calB. \]
Let us explain how this implies part a) (Corollary 2.8 in [\emph{loc. cit.}]): Every object in $\Sat_{\fraka,\os}$ is a direct summand of $\bar{\Psi}_\fraka(\calA)$ for some $\calA$ because $\Sat_{\fraka,\os}$ is semi-simple with simple objects $\IC_{t^{\bar{\mu}}}$ and these are direct summands of $\bar{\Psi}_\fraka(\IC_\mu)$ with multiplicity $1$. Since convolution with $\bar{\Psi}_\fraka(\calA)$ is bi-exact, it follows that $\Sat_{\fraka,\os}$ is stable under convolution. This implies a).\medskip\\ 
b) The tuple $(\Sat_{\fraka,\os},\star)$ has a unique structure of a neutral Tannakian category such that
\[\bar{\Psi}_\fraka\colon  (\Sat_{G,\oeta},\star)\;\longto\; (\Sat_{\fraka,\os},\star)\]
is a tensor functor compatible with the fiber functors $\om_\oeta\simeq \om_{\os}\circ \bar{\Psi}_\fraka$.\smallskip\\
\S 3 in [\emph{loc. cit.}] carries over literally: In Theorem-Definition 3.1 of [\emph{loc. cit}] one may replace $\bbA^1_k$ by any smooth curve $X$. This implies that $\bar{\Psi}_\fraka\colon (\Sat_{G,\oeta},\star)\to (\Sat_{\fraka,\os},\star)$ is a central functor (Proposition 3.2 of [\emph{loc. cit.}]): properties (1)-(4) of a central functor in [\emph{loc. cit.}] can be checked the same way because the monodromy of all nearby cycles involved is trivial as above (cf. part a)) and hence can be expressed via intermediate extensions. Now applying Lemma 3.3 of [\emph{loc. cit.}], it follows that $\om_{\os}$ is a fiber functor (Corollary 3.5 of [\emph{loc. cit.}]). Note that $\om_\oeta\simeq \om_{\os}\circ \bar{\Psi}_\fraka$ follows by proper base change. We deduce that $\Sat_{\fraka,\os}$ is a neutral Tannakian category. The uniqueness of the Tannakian structure follows from the uniqueness of the symmetric monoidal structure for $\om_\oeta$, cf. the discussion above \eqref{fiberfun1}. This proves b).\medskip\\ 
c) There is up to natural isomorphism a commutative diagram of functors of abelian tensor categories 
\[
\begin{tikzpicture}[baseline=(current  bounding  box.center)]
\matrix(a)[matrix of math nodes, 
row sep=1.5em, column sep=2em, 
text height=1.5ex, text depth=0.45ex] 
{(\Sat_{G,\oeta},\star)&(\Sat_{\fraka,\os},\star) \\ 
(\Rep_{\algQl}(\hat{G}),\otimes)&(\Rep_{\algQl}(\hat{G}^I),\otimes), \\}; 
\path[->](a-1-1) edge node[above] {$\bar{\Psi}_\fraka$} (a-1-2);
\path[->](a-2-1) edge node[above] {$\res$} (a-2-2);
\path[->](a-1-1) edge node[right] {$\om_\oeta$} (a-2-1);
\path[->](a-1-2) edge node[right] {$\om_\os$} (a-2-2);
\end{tikzpicture}
\]
where the vertical arrows are equivalences.\medskip\\
Let $H=\Aut^\star(\om_\os)$ be the affine $\algQl$-group scheme of tensor automorphisms defined by $(\Sat_{\fraka,\os},\om_{\os})$. Via the unramified Satake equivalence, the tensor functor $\bar{\Psi}_\fraka$ defines a morphism $H\to \hat{G}$ which identifies $H$ with a closed reductive subgroup of $\hat{G}$. Indeed, every object in $\Sat_{\fraka,\os}$ appears as a direct summand in the essential image of $\bar{\Psi}_\fraka$, and since $\Sat_{\fraka,\os}$ is semi-simple, $H$ is reductive. It remains to identify the subgroup $H\subset \hat{G}$. The inertia group $I$ acts on $\Gr_{G,\oeta}\to \Gr_{G,\breve{\eta}}$ induced from the action on $\oeta\to \breve{\eta}$ where $\breve{\eta}=\Spec(\nF)$. As in the Appendix of [\emph{loc. cit.}], this induces via $\Sat_{G,\oeta}\times I\to \Sat_{G,\oeta}$, $(\calA,\ga)\mapsto \ga^*\calA$, an action of $I$ on the Tannakian category $(\Sat_{G,\oeta}, \om_\oeta)$, and hence on $\Aut^\star(\om_\oeta)=\hat{G}$. Since the tensor functor $\bar{\Psi}_\fraka$ is invariant under this action, we get that $H\subset \hat{G}^I$ (cf. Lemma 4.5 in [\emph{loc. cit.}]), and we need to show that equality holds. 
Recall that $\hat{G}$ admits a canonical pinning $(\hat{G},\hat{B},\hat{T},\hat{X})$, cf. Remark \ref{pinrem}. The Galois action, and in particular the $I$-action preserves the pinning. So we can apply Proposition \ref{keylem} and Corollary \ref{highweight} below which shows that $\Rep_{\algQl}(\hat{G}^I)$ is semi-simple with simple objects the irreducible highest weight representations $V_{\bar{\mu}}$ for $\bar{\mu}\in X^*(\hat{T}^I)_+$. Note that $X^*(\hat{T}^I)_+=X_*(T)_{I,+}$ as partially ordered semigroups. We claim that $V_{\bar{\mu}}|_H$ is the irreducible representation $\om_\os(\IC_{t^{\bar{\mu}}})$ and hence $H=\hat{G}^I$ (the argument below Lemma 4.10 in [\emph{loc. cit.}]): Let $\mu\in X_*(T)_+$ be a lift of $\bar{\mu}$. Let $V_\mu$ be the irreducible representation of $\hat{G}$ of highest weight $\mu$. Then 
\[V_\mu|_{\hat{G}^I} = V_{\bar{\mu}} \oplus \bigoplus_{\bar{\la}< \bar{\mu}}V_{\bar{\la}}^{\oplus c_{\bar{\la},\bar{\mu}}}\]
for some $c_{\bar{\la},\bar{\mu}}\geq 0$. 
By the geometric Satake equivalence, we have $V_\mu=\om_\oeta(\IC_\mu)$ and 
\[V_\mu|_H =\om_\os\circ \bar{\Psi}_\fraka(\IC_\mu)= \om_\os(\IC_{t^{\bar{\mu}}})\oplus\bigoplus_{\bar{\la}<\bar{\mu}} \om_\os(\IC_{t^{\bar{\la}}})^{\oplus d_{\bar{\la},\bar{\mu}}}\]
for some $d_{\bar{\la},\bar{\mu}}\geq 0$ because $\IC_{t^{\bar{\mu}}}$ is a direct summand of multiplicity $1$ in $\bar{\Psi}_\fraka(\IC_{\mu})$. Consider 
\[V_{\bar{\mu}}|_H \oplus \bigoplus_{\bar{\la}< \bar{\mu}}V_{\bar{\la}}|_H^{\oplus c_{\bar{\la},\bar{\mu}}}=V_\mu|_H=\om_\os(\IC_{t^{\bar{\mu}}})\oplus\bigoplus_{\bar{\la}<\bar{\mu}} \om_\os(\IC_{t^{\bar{\la}}})^{\oplus d_{\bar{\la},\bar{\mu}}}\]
By induction on $\bar{\mu}$, we obtain that $V_{\bar{\la}}|_H=\om_\os(\IC_{t^{\bar{\la}}})$ and $c_{\bar{\la},\bar{\mu}}=d_{\bar{\la},\bar{\mu}}$ for all $\bar{\la}<\bar{\mu}$ and hence $V_{\bar{\mu}}|_H=\om_\os(\IC_{t^{\bar{\mu}}})$. This finishes the proof of part c) and Theorem C from the introduction. The uniqueness of the equivalence in Theorem C is a consequence of the Isomorphism Theorem in the theory of reductive groups. \medskip\\  
\emph{Galois descent:} Based on the geometric equivalence above, one shows that 
\[(\Sat_\fraka,\star)\simeq(\Rep_{\algQl}({\LG}_{\on{r}}),\otimes), \;\;\;\;\calA\mapsto \om_s(\calA),\] 
as in \cite[Appendix]{RZ}. In particular, Theorem \ref{Zhu1} i) holds, and part iii) follows from part ii). 

For ii), let $\calA\in \Sat_G$. We claim that $\Psi_\fraka(\calA)\in \Sat_\fraka$. Indeed, $\Psi_\fraka(\calA)$ is pure of weight $0$, cf. Proposition \ref{monodromy2}, and it is enough to show that $\Psi_\fraka(\IC_\mu)\in P_{L^+\calG}(\Fl_\fraka)$ is semi-simple for all $\mu\in X_*(T)$. By replacing $k$ by a finite extension, we may assume that every $L^+\calG$-orbit is defined over $k$. The $L^+\calG$-equivariance implies that there is a finite direct sum decomposition
\[\Psi_\fraka(\IC_\mu)\;\simeq\;\bigoplus_{w}\IC_w\otimes V_w, \] 
where $\IC_w$ is the intersection complex of the Schubert variety $Y_w\subset \Fl_\fraka$, $w\in W$, and $V_w$ is a local system on $\Spec(k)$. In fact, $V_w$ is constant because $\om_s(\Psi_\fraka(\IC_\mu))\simeq \om(\IC_\mu)$, cf. Corollary \ref{satcor}. This shows $\Psi_\fraka(\calA)\in \Sat_\fraka$.

It remains to show that $\Psi_\fraka\colon  \Sat_G\to \Sat_\fraka$ is a tensor functor, i.e. that the isomorphism $\Psi_\fraka(\calA\star\calB)\simeq \Psi_\fraka(\calA)\star\Psi_\fraka(\calB)$ is Galois equivariantly compatible with the commutativity constraint, and defines a morphism in $P_{L^+\calG}(\Fl_\fraka\times_s\eta)$. This follows from the fact that the Beilinson-Drinfeld Grassmannians are defined over the ground field, cf. \cite[\S 9.b]{PZ}. The uniqueness is clear. This finishes the proof of the theorem.
\end{proof}

\begin{appendix}

\section{A Levi Lemma}\label{levilemapp}
Let $F$ be a discretely valued complete field with valuation ring $\calO_F$ and perfect residue field $k$ of cohomological dimension $\leq 1$. Let $G$ be a connected reductive group over  $F$. Let $A\subset G$ be a maximal $F$-split torus, and write $\scrA=\scrA(G,A)$ for the corresponding apartment in the building. Let $\Omega\subset \scrA$ be a set whose projection onto the semi-simple part $\scrA^{\text{ss}}$ is bounded. Bruhat and Tits \cite{BT2} associate to $\Omega$ a smooth affine group scheme $\calG_{\Omega}$ over $\calO_F$ with geometrically connected fibers whose generic fiber is $G$, and whose $\mathcal{O}_F$-valued points fix $\Omega$ pointwise. In this appendix, we show that the construction of $\calG_\Omega$ is compatible with Levi subgroups as follows.

Let $L\subset G$ be a semi-standard Levi with respect to $A$, i.e. $L$ is the centralizer of a subtorus in $A$. Let $R(G,A)$ be the relative root system of $G$, and let $R(L,A)$ be the root subsystem of $L$. Then $R(L,A)\subset R(G,A)$ is a root subsystem, and $L$ is generated by the centralizer $Z_G(A)$ and the root groups $U_a$ with $a\in R(L,A)$. 

Let $\scrA(L,A)$ be the apartment of $L$ corresponding to $A$. There is an identification $\scrA(G,A)=\scrA(L,A)$ as homogeneous spaces compatible with the action of the Iwahori Weyl groups. We also consider $\Omega$ as a subset of $\mathscr{A}(L,A)$. Then its projection onto the semi-simple part is bounded, and we denote by $\calL_{\Omega}$ the corresponding group scheme.

\begin{lem}\label{levilem} 
The group scheme $\calL_\Omega$ is scheme theoretic closure of $L$ in $\calG_{\Omega}$. In particular,
\[\calL_{\Omega}(\calO_F)=\calG_{\Omega}(\calO_F)\cap L(F).\]
\end{lem}
\begin{proof}
We may assume that $F=\nF$. Then $T=Z_G(A)$ is a maximal torus because $G$ is quasi-split. We appeal to the construction of $\mathcal{L}_{\Omega}$ and $\mathcal{G}_{\Omega}$.\\ 
Let $\mathcal{T}$ be the connected  lft-N\'eron model of $T$ over $\mathcal{O}_F$, cf.  \cite[\S 10.1-10.3]{BLR}. For a every root $a\in R$, Bruhat and Tits define a smooth group scheme $\calU_{a,\Omega}$ with geometrically connected fibers whose generic fiber is the root subgroup $U_a$. The groups $\calU_{a,\Omega}$ are the same for $G$ as for $L$. There is a closed immersion
\[
\frakX_L\defined \calT\times\prod_{a\in R(L,A)}\calU_{a,\Omega}\longto\calT\times\prod_{a\in R(G,A)}\calU_{a,\Omega}\defined\frakX_G.
\]
Let $\frakX'_L$ (resp. $\frakX'_G$) be the scheme obtained by gluing $\frakX_L$ (resp. $\frakX_G$) and $L$ (resp. $G$) along $(\frakX_L)_\eta$ (resp. $(\frakX_G)_\eta$). We obtain a closed immersion $\frakX'_L\to\frakX'_G$ of smooth separated $\calO_K$-schemes compatible with the birational group laws. The scheme $\calG_{\Omega}$ (resp. $\calL_{\Omega}$) is constructed from $\frakX'_G$ (resp. $\frakX'_L$) by extending the birational group law. \\
Denote by $\bar{L}$ the scheme theoretic closure of $L$ in $\calG_{\Omega}$. The scheme theoretic closure $\bar{\frakX}'_L$ of $\frakX'_L$ in $\calG_{\Omega}$ is equal to $\bar{L}$, and the group law on $\bar{L}$ extends  the birational group law on $\frakX'_L$. \\
The scheme $\bar{L}$ is a flat closed (hence affine) $\calO_F$-subgroup scheme of $\calG_\Omega$. By the uniqueness of the extension of birational group laws, it is enough to show that $\bar{L}$ is smooth, i.e. that the special $\bar{L}\otimes k$ fiber is smooth. But $\bar{L}\otimes k$  contains the open smooth subscheme $\frakX'_L\otimes k$, hence is smooth.
\end{proof}
\begin{rmk}\label{levirem}
Let $\Omega=\fraka$ be a facet of $\scrA(G,A,F)$. Denote by $\fraka_L$ the unique facet of $\mathscr{A}(L,A,F)$ with $\fraka\subset\fraka_L$. Then $\calL_{\fraka_L}=\calL_\fraka$, and Lemma \ref{levilem} implies that the scheme theoretic closure of $L$ in $\calG_{\fraka}$ is the parahoric group scheme over $\calO_F$ associated with $\fraka_L$.
\end{rmk}

\section{Reductive descent}\label{reductivedescentapp}
This appendix is divided into two parts. Let $k$ be a henselian valued field with completion denoted $K$. In the first part, we show that every connected reductive $K$-group $G$ descends to a connected reductive $k$-group, cf. Corollary \ref{reductivedescent1}. The beautiful argument was explained to me by B. Conrad, and I thank him for the permission to give his proof. The key step (Theorem \ref{generalresult} below) is a special case of a result due to Gabber-Gille-Moret-Bailly \cite[Prop. 3.5.3 (2)]{GGMB}. The second part is an application of the first part. If $k$ is discretely valued, we prove that every flat affine model of $G$ over the valuation ring of $K$ descends to the valuation ring of $k$, cf. Corollary \ref{reductivedescent2}. This uses the descent Lemma of Beauville and Laszlo \cite{BL}.

In a first step, we need some general result on Galois cohomology, cf. \cite[\S 3.5]{GGMB}. Note that the Galois groups of $k$ and $K$ are naturally isomorphic, since $k$ is henselian, cf. \emph{[loc. cit. \S 3.5.1]}. The proof below was explained to me by B. Conrad, and we give it for the reader's convenience.

\begin{thm}[\cite{GGMB} Prop. 3.5.3 (2)] \label{generalresult} For any smooth affine $k$-group $H$, the natural map
\[{\rm H}^1(k,H)\longto {\rm H}^1(K,H_K)\]
is bijective.
\end{thm}

\begin{proof}[Proof due to B. Conrad] By Galois-twisting, injectivity reduces to triviality of the kernel. In other words, if $\calE$ is an $H$-torsor over $k$ which has a $K$-point, then it has a $k$-point. More generally, if $X$ is a smooth $k$-scheme then $X(k)$ is dense in $X(K)$ for the valuation topoloy. This is Zariski-local on $X$, so we can assume there is an etale map $f\colon X\to \bbA^n_k$. The open image $V=f(X)$ is dense open in $\bbA^n_k$, so $V(k)$ is dense in $V(K)$ due to density of $k$ in $K$. By the Zariski-local structure theorem for \'etale morphisms and the $K$-analytic inverse function theorem, for each $x\in X(K)$ and $v=f(x)\in V(K)$, every $v'$ sufficiently near $v$ admits $x'\in f^{-1}(v')$ near $x$ in $X(K)$. By openness of $X(K)\to V(K)$, for any $x\in X(K)$ and open $\Om \subset X(K)$ around $x$ we can find an open $U\subset V(K)$ around $v=f(x)$ such that every $u\in U$ is the image of a $K$-point in $\Om$. Consider such $u\in V(k)\cap U$ as exists by density of $V(k)$ in $V(K)$. The fiber scheme $f^{-1}(u)$ is finite \'etale over $k$, so the equivalence of Galois theories of $k$ and $K$ shows that every $K$-point in $f^{-1}(u)$ comes from a unique $k$-point of $f^{-1}(u)$. Hence, we can find a $k$-point in $\Om$. This completes the proof of injectivity.

For surjectivity, choose a closed $k$-subgroup inclusion $j\colon H\hookto \Gl_n=:G$ and let $X=G/H$ which is a smooth $k$-scheme. Thus, there is a natural surjection
\[G(k)\bslash X(k)\onto {\rm H}^1(k,H)\]
since ${\rm H}^1(k,\Gl_n)=1$ and likewise for $K$. It therefore suffices to show that the natural map
\[X(k)\to G(K)\bslash X(K)\]
is surjective. Since $X(k)$ is dense in $X(K)$ by the above, it suffices to show that all $G(K)$-orbits in $X(K)$ are open. But each orbit map $G_K\to X_K$ through a $K$-point is a smooth map since $H$ is smooth, so the induced map on $K$-points is open (hence has open image) by the $K$-analytic inverse function theorem (using the Zariski-local structure of smooth morphisms).
\end{proof}

\begin{cor}[B. Conrad] \label{reductivedescent1}
Let $G$ be a connected reductive $K$-group. Then $G$ descends to a connected reductive $k$-group, i.e. there exists a connected reductive $k$-group $G'$ unique up to isomorphism such that as groups $G'_K \;\simeq\; G$.
\end{cor}
\begin{proof}
The Galois groups of $k$ and $K$ are naturally isomorphic, so if $R$ denotes the root datum of $G_{\bar{K}}$ then ${\rm H}^1(k,\Aut(R))\to {\rm H}^1(K,\Aut(R))$ is bijective. This says that every quasi-split connected reductive $K$-group descends to a quasi-split connected reductive $k$-group which is moreover unique up to isomorphism (since these ${\rm H}^1$'s classify quasi-split forms with a given geometric root datum). Every connected reductive $K$-group $G$ has a unique quasi-split inner form $G^*$, so $G$ is obtained from $G^*$ via twisting against a class in ${\rm H}^1(K,G^*_{\ad})$ for the adjoint semisimple $G^*_{\ad}:=G^*/Z_{G^*}$. Thus, it suffices to show that ${\rm H}^1(k,H)\to {\rm H}^1(K,H_K)$ is surjective for every smooth affine $k$-group $H$. The corollary follows from Theorem \ref{generalresult}.
\end{proof}

Let us point out the following consequence which we use throughout the main body of the manuscript. Denote by $\calO_k$ (resp. $\calO_K$) the valuation ring of $k$ (resp. $K$).

\begin{cor} \label{reductivedescent2}
Assume $k$ to be discretely valued. Let $\calG$ be a flat affine $\calO_K$-group scheme with $\calG_K$ a connected reductive $K$-group. Then $\calG$ descends to a flat affine $\calO_k$-group, i.e. there exists a flat affine $\calO_k$-group scheme $\calG'$ unique up to isomorphism such that $\calG'_{\calO_K}\simeq \calG$ as groups. Moreover, if $\calG$ is of finite type over $\calO_K$, then $\calG'$ is of finite type over $\calO_k$.
\end{cor}
\begin{proof}
Let $\pi$ be a uniformizer in $k$. Then $k=\calO_k[{1\over \pi}]$ (resp. $K=\calO_K[{1\over \pi}]$) and $\calO_K$ is the $\pi$-adic completion of $\calO_k$. Denote $\calG=\Spec(\calA)$ where $\calA$ is a $\pi$-regular $\calO_K$-algebra by the flatness of $\calG$. Since $\calG_K$ is connected reductive, there is by Corollary \ref{reductivedescent1} a connected reductive $k$-group $G'=\Spec(A')$ and a $K$-isomorphism $u: \calG_K\simeq G'_K$, i.e. a $K$-isomorphism $u^\sharp: A'_K \simeq \calA_K$ of Hopf algebras. By the Theorem of Beauville and Laszlo \cite{BL} applied to the triple $(A',\calA,u^\sharp)$, there is a triple $(\calA',\al,\be)$ with $\calA'$ a $\pi$-regular $\calO_k$-module, $\al:\calA'_k \simeq A'$ and $\be: \calA'_{\calO_K}\simeq \calA$ unique up to unique isomorphism such that $u^\sharp= \be_K\circ \al_K^{-1}$. Now the uniqueness implies that $\calA'$ is a $\calO_k$-Hopf algebra. Let $\calG'\defined \Spec(\calA')$ an affine $\calO_k$-group scheme. Since $\calO_k$ is a discrete valuation ring, the $\pi$-regularity of $\calA'$ implies flatness of $\calG'$. Now assume $\calG\to \Spec(\calO_K)$ to be of finite type. Since $\calO_k\subset \calO_K$ is faithfully flat, fpqc-descent implies that $\calG'\to \Spec(\calO_k)$ is of finite type. This proves the corollary.
\end{proof}

\section{The group of fixed points}\label{fixpointapp}
Let $G$ be a connected reductive group over an algebraically closed field $C$. Let $I$ be a finite subgroup of the algebraic automorphisms of $G$, and assume that $I$ fixes some pinning of $G$. As is proven by Haines \cite[Proposition 4.1]{Haines14} relying on work of Steinberg \cite{St} and Springer \cite{Sp}, the group of fixed points $G^I$ is again a reductive group which is not connected in general. In this appendix, we prove the existence and uniqueness of irreducible highest weight representations of $G^I$. 

First recall the notion of a pinning. Let $T\subset B\subset G$ be a maximal torus contained in a Borel subgroup. Let $R=R(G,T)$ (resp. $R^\vee$) be the set of roots (resp. coroots), and let $R_+=R(B,T)$ (resp. $R_+^\vee$) be the subset of positive roots (resp. coroots). There is a bijection $R\to R^\vee$, $a\mapsto a^\vee$ which preserves the subsets of positive roots. For $a\in R$, let $U_a\subset G$ be the root subgroup, and denote by $\fraku_a\subset \Lie(G)$ its Lie algebra. Denote by $\Delta\subset R^+$ (resp. $\Delta^\vee\subset R_+^\vee$) the set of simple roots (resp. coroots). For every $a\in \Delta$, choose a generator $X_a$ of the $1$-dimensional $C$-vector space $\fraku_a$, and let $X=\sum_{a\in\Delta}X_a$ be the principal nilpotent element in $\Lie(B)$. A \emph{pinning of $G$} is a quadruple $(G,B,T,X)$ where $T\subset B$ is a torus contained in a Borel subgroup, and $X\in \Lie(B)$ is a principal nilpotent element. Note that there is a canonical isomorphism 
\begin{equation}\label{basedrootiso}
\Aut((G,B,T,X))\;\simeq\;\Aut((X^*(T),R,\Delta,X_*(T),R^\vee,\Delta^\vee))
\end{equation}
between the pinning preserving automorphisms of $G$, and the automorphisms of the based root datum $(X^*(T),R,\Delta,X_*(T),R^\vee,\Delta^\vee)$. 

For readability we mention the following basic facts on groups of fixed points. Let $H$ be any affine group scheme over $C$, and let $I\subset \Aut_C(H)$ be a finite subgroup of algebraic automorphisms. Then the group of fixed points $H^I\subset H$ is a closed subgroup scheme. Assume that the order $|I|$ is prime to the characteristic of $C$. Then\smallskip\\
a) if $H$ is smooth, then $H^I$ is smooth, cf. \cite[3.4]{Ed}, and\smallskip\\
b) if $H$ is reductive, then $H^{I}$ is reductive, cf. \cite[Theorem 2.1]{PY},\smallskip\\
but not connected in general. If one assumes further that $I$ fixes a pinning of $H$, one may even drop the `prime to the characteristic'-hypothesis on $|I|$ and still having a) and b) above. In \cite[Proposition 4.1]{Haines14}, Haines proves the following result.

\begin{prop}[Haines]\label{keylem}
Let $G$ be a connected reductive group over an algebraically closed field $C$. Let $(G,B,T,X)$ be a pinning of $G$, and let $I$ be a finite subgroup of the pinning preserving automorphisms. \smallskip\\
i) The group $G^I$ is reductive, and there exists some principal nilpotent $X^I\in\Lie(G^{I,0}) $ such that the tuple $(G^{I,0},B^{I,0},T^{I,0},X^I)$ is a pinning of the connected reductive group $G^{I,0}$. If $\cha(C)\not = 2$, then $X^I=X$. \smallskip\\
ii) The inclusion $T^I\subset G^I$ induces a bijection on connected components $\pi_0(T^I)\simeq \pi_0(G^I)$.
\end{prop}
\hfill\ensuremath{\Box} 

Let $Q_+\subset X^*(T)$ be the semigroup generated by $R_+$, and denote by $(Q_I)_+$ the image of $Q_+$ under the canonical projection $X^*(T)\to X^*(T^I)$. The group of characters $X^*(T^I)$ is equipped with the dominance order as follows. For $\mu, \la\in X^*(T^I)$, define $\la\leq\mu$ if and only if $\mu-\la\in (Q_I)_+$.   

Denote by $X^*(T)_+$ the semigroup of dominant weights, and let $X^*(T^I)_+$ be the semigroup defined as the image of $X^*(T)_+$ under the canonical projection $X^*(T)\to X^*(T^I)$. 
\begin{dfn}
Let $\mu \in X^*(T^I)$. An algebraic representation $\rho\colon G^I\to \GL(V)$ is said to be \emph{of highest weight $\mu$} if \smallskip\\
i) $\mu$ appears with a non-zero multiplicity in the restriction $\rho|_{T^I}$, and\smallskip\\
ii) if $\la\in X^*(T^I)$ appears in $\rho|_{T^I}$ with non-zero multiplicity, then $\la\leq\mu$.
\end{dfn}

\begin{rmk}
Let $w_0$ be the longest element in the finite Weyl group $W_0=W_0(G,T)$. Since $I$ acts by pinned automorphisms, we have $w_0\in W_0^I$, and it follows that $w_0$ acts on $X^*(T^I)$. Then property ii) implies that $w_0\mu\leq\la\leq \mu$, for all $\la\in X^*(T^I)$ appearing in $\rho|_{T^I}$ with non-zero multiplicity.  
\end{rmk}

If $G^I$ is connected reductive, then $T^I$ is a maximal torus by Proposition \ref{keylem}. In this case, it is well-known that there exists for every $\mu\in X^*(T^I)_+$ a unique up to isomorphism irreducible representation of highest weight $\mu$, and that every irreducible representation is of this form. Moreover, the multiplicity of the $\mu$-weight space is $1$, cf. \cite[Chapter II.2]{Jantzen}. 

\begin{cor} \label{highweight} Let $G$ be a connected reductive group over an algebraically closed field $C$. Let $(G,B,T,X)$ be a pinning of $G$, and let $I$ be a finite subgroup of the pinning preserving automorphisms. \smallskip\\
i) For every $\mu\in X^*(T^I)_+$ there exists a unique up to isomorphism irreducible representation $\rho_\mu$ of $G^I$ of highest weight $\mu$, and every irreducible representation of $G^I$ is of this form.\smallskip\\  
ii) The multiplicity of the $\mu$-weight space is $1$.
\end{cor}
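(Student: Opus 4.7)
The strategy is to reduce to classical Borel--Weil theory for the connected reductive group $G^{I,0}$, then account for the extension to the (possibly disconnected) $G^I$ using the structural input of Lemma~\ref{keylem}. That lemma gives $(G^{I,0},B^{I,0},T^{I,0},X)$ as a pinning of $G^{I,0}$ together with the isomorphism $\pi_0(T^I)\simeq \pi_0(G^I)$, so $G^I=T^I\cdot G^{I,0}$ with $T^I\cap G^{I,0}=T^{I,0}$, the subgroup $B^I:=T^I\ltimes U^I$ is the natural Borel of $G^I$, and $G^I/B^I\simeq G^{I,0}/B^{I,0}$ as projective $G^I$-varieties. Moreover, for each $t\in T^I$, conjugation by $t$ on $G^{I,0}$ fixes $T^{I,0}$ pointwise and preserves $B^{I,0}$; the induced automorphism of the based root datum is therefore trivial, so by \eqref{basedrootiso} this conjugation is inner, equal to $\on{Int}(g_t)$ for some $g_t\in T^{I,0}$ unique modulo $Z(G^{I,0})\cap T^{I,0}$. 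In particular $\pi_0(G^I)$ acts on $G^{I,0}$ by inner automorphisms, and every isomorphism class of $G^{I,0}$-irreducible representations is $\pi_0(G^I)$-invariant.

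For existence in (i) and the multiplicity-one statement (ii), I would apply Borel--Weil directly on $G^I/B^I$. Given $\mu\in X^*(T^I)_+$, choose a dominant lift $\lambda\in X^*(T)_+$; then $\mu^0:=\mu|_{T^{I,0}}$ is dominant for $G^{I,0}$, since each simple coroot $\alpha^\vee$ of $G^{I,0}$ is the sum over the corresponding $I$-orbit $\bar{\alpha}$ of simple coroots of $G$, giving $\langle\mu^0,\alpha^\vee\rangle=\sum_{a\in\bar{\alpha}}\langle\lambda,a^\vee\rangle\geq 0$. Extending $\mu$ trivially across $U^I$ yields a character of $B^I$ and a $G^I$-equivariant line bundle $\calL_\mu$ on $G^I/B^I=G^{I,0}/B^{I,0}$. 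Classical Borel--Weil for $G^{I,0}$ identifies $\rho_\mu:=H^0(G^I/B^I,\calL_\mu)$ with the irreducible $G^{I,0}$-module of highest weight $\mu^0$, which has one-dimensional highest weight line, and the $G^I$-equivariance upgrades this to an irreducible $G^I$-representation whose $T^I$-character on the highest weight line is $\mu$ by construction of $\calL_\mu$.

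For uniqueness and the ``every irrep is a $\rho_\mu$'' statement, let $V$ be any irreducible $G^I$-representation. Since $\pi_0(G^I)$ fixes every $G^{I,0}$-irreducible class, $V|_{G^{I,0}}$ is isotypic, of the form $V^0_\nu\otimes M$ for a unique $\nu\in X^*(T^{I,0})_+$ and a multiplicity space $M$. Schur's lemma then forces the $T^I$-action to decompose as $(v\otimes m)\mapsto \pi^0(g_t)v\otimes u_t m$, so that $t\mapsto u_t$ is a projective representation of $T^I/T^{I,0}$ whose cocycle takes values in $Z(G^{I,0})\cap T^{I,0}$. Choosing a splitting of $1\to T^{I,0}\to T^I\to\pi_0(T^I)\to 1$ (available because $|\pi_0(T^I)|$ is prime to $\cha(C)$) and normalizing the $g_t$ multiplicatively trivializes this cocycle; then $u_\bullet$ is an ordinary character of $T^I/T^{I,0}$, and irreducibility of $V$ forces $\dim M=1$. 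The induced $T^I$-character $\chi$ on the resulting one-dimensional highest weight line extends $\nu$, and comparing highest weight lines gives a nonzero $G^I$-equivariant map $\rho_\chi\to V$, which is an isomorphism. Finally, to see $\chi\in X^*(T^I)_+$, take any lift $\tilde\chi\in X^*(T)$ (available by surjectivity of $X^*(T)\twoheadrightarrow X^*(T^I)$) and adjust it by elements of $\ker(X^*(T)\to X^*(T^I))\supseteq(I-1)X^*(T)$ into $X^*(T)_+$, using the folding relation on coroots and the dominance of $\nu$.

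The main obstacle lies in the third paragraph, specifically in trivializing the projective-representation cocycle and in the dominant-lifting step at the end. Trivialization demands a coherent choice of lifts $g_t\in T^{I,0}$ compatible with a multiplicative section of $T^I\to\pi_0(T^I)$; that $g_t$ is determined by its action on root groups should make this rigorous, but the verification is delicate. The dominant-lifting step is a purely combinatorial statement about the $I$-action on character and coroot lattices, and rests crucially on $I$ preserving the pinning.
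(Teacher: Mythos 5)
Your route is genuinely different from the paper's. The paper takes the irreducible $G^{I,0}$-module $\rho_{\bar\mu}$, applies Frobenius reciprocity to $\ind_{G^{I,0}}^{G^I}(\rho_{\bar\mu})$, and reads off the decomposition into twists $\rho\otimes\chi$ by characters of $\pi_0(G^I)\simeq\pi_0(T^I)$ -- essentially a compact Clifford-theory argument. You instead build $\rho_\mu$ geometrically via Borel--Weil on $G^I/B^I\simeq G^{I,0}/B^{I,0}$ and then classify all irreducibles by analysing the multiplicity space $M$ with a projective-representation cocycle. Your Borel--Weil step is a real improvement in one respect: the paper's equation for the induction \emph{presupposes} an irreducible extension $\rho$ of $\rho_{\bar\mu}$ to $G^I$, and your line-bundle construction supplies exactly that extension explicitly. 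The dominance check for $\mu^0$ is also fine since each simple coroot of $G^{I,0}$ is a positive multiple of the orbit-sum of simple coroots of $G$.

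However, your third paragraph has a genuine gap that you have correctly flagged. The assertion that a splitting of $1\to T^{I,0}\to T^I\to\pi_0(T^I)\to 1$ together with ``normalizing the $g_t$ multiplicatively'' kills the $C^\times$-valued $2$-cocycle is not obviously true; the Schur multiplier $H^2(\pi_0(T^I),C^\times)$ of a finite abelian group is nontrivial in general, and the ambiguity in $g_t$ lives in $Z(G^{I,0})\cap T^{I,0}$, whose lifting problem is an honest extension question. The good news is that this step is entirely avoidable: once you have $\rho_\nu$ from the Borel--Weil construction, for any irreducible $V$ with $V|_{G^{I,0}}$ a sum of copies of $V^0_\nu$, the space $\Hom_{G^{I,0}}(\rho_\nu,V)$ is nonzero and carries an action of the finite abelian group $\pi_0(G^I)$; picking a character $\chi$ occurring and taking its eigenspace yields a nonzero $G^I$-equivariant map $\rho_\nu\otimes\chi\to V$, hence an isomorphism, and in particular $\dim M=1$. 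This bypasses the cocycle completely and brings your argument in line with the paper's Frobenius-reciprocity computation, which is the cleanest path from your paragraph 2 to the conclusion.
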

\begin{proof} We follow the argument of Zhu \cite[Lemma 4.10]{RZ}. Let $\bar{\mu}$ be the image of $\mu$ under the restriction $X^*(T^I)\to X^*(T^{I,0})$, and let $\rho_{\bar{\mu}}$ be the unique irreducible representation of highest weight $\bar{\mu}$, cf. \cite[Chapter II.2]{Jantzen}. Frobenius reciprocity and Proposition \ref{keylem} imply 
\begin{equation}\label{frobrec}
\ind_{G^{I,0}}^{G^I}(\rho_{\bar{\mu}})\;\simeq\;\bigoplus_{\chi\in X^*(\pi_0(T^I))}\rho\otimes\chi,
\end{equation}
where $\rho$ is an irreducible representation of $G^I$ which restricts to $\rho_{\bar{\mu}}$. Here, the $\chi$'s are considered as $G^I$-representations by inflation along $G^I\to\pi_0(G^I)\simeq \pi_0(T^I)$. This shows that there is a unique $\chi\in X^*(\pi_0(T^I))$ such that $\rho_\mu=\rho\otimes\chi$ is of highest weight $\mu$. Conversely, \eqref{frobrec} implies that every irreducible representation of $G^I$ is a direct summand of some induction, and hence is of the form $\rho_\mu$ for some $\mu\in X^*(T^I)$. This proves i). Part ii) is easily deduced from \eqref{frobrec}. 
\end{proof}

\end{appendix}

%\pagebreak


\begin{thebibliography}{99}

\bibitem{SA} S. Anantharaman, Sch\'emas en groupes, {\it Espaces homog\`enes et espaces alg\'ebriques sur une base de dimension $1$}, Sur les groupes alg\'ebriques, Soc. Math. France, Paris (1973), pp. 5-79. Bull. Soc. Math. France, M\'em. 33.

%\bibitem{AB} S. Arkhipov and R. Bezrukavnikov: {\it Perverse sheaves on affine flags and Langlands dual group}, arXiv:math/0201073.

\bibitem{BL} A. Beauville and Y. Laszlo: {\it Un lemme de descente}, C. R. Acad. Sci. Paris S\'er. I Math. 320 (1995), no. 3, 335-340. 

\bibitem{BB} A. Beilinson and J. Bernstein: {\it A proof of Jantzens' conjectures}, I. M. Gelfand Seminar, Adv. Soviet Math., vol. 16, Amer. Math. Soc., Providence, RI (1993), pp. 1-50.

%\bibitem[BBD] A. Beilinson, J. Bernstein and P. Deligne: {\it Faisceaux pervers}, Ast\'erisque 100 (1982), 5-171.

\bibitem{BD} A. Beilinson and V. Drinfeld: {\it Quantization of Hitchin's integrable system and Hecke eigensheaves}, preprint available at http://www.math.utexas.edu/users/benzvi/Langlands.html. 

\bibitem{BLR} S. Bosch, W. L\"utkebohmert and M. Raynaud: {\it N\'eron models},  Ergebnisse der Mathematik und ihrer Grenzgebiete (3) 21, Berlin, New York: Springer-Verlag.

%\bibitem{BoTi} A. Borel and J. Tits: {\it Groupes r\'eductifs}, Inst. Hautes \'Etudes Sci. Publ. Math. 27 (1965), 55-151.

%\bibitem{Bou} N. Bourbaki: {\it \'El\'ements de Math\'ematique. Fasc. XXXIV. Groupes et alg\`ebre de Lie. Chapitre: IV: Groupes de Coxeter et syst\`eme de Tits. Chapitre V: Groupes engendr\'es par des r\'eflexions. Chapitre VI: Syst\`emes de racines}, in: Actualit\'es Scientifiques et Industrielles, vol. 1337, Hermann, Paris, 1968, 288 pp.

\bibitem{Bo} M. Borovoi: {\it Abelian Galois cohomology of reductive algebraic groups}, Mem. Amer. Soc. Math. 626 (1998). 

%\bibitem{Br} T. Braden: {\it Hyperbolic localization of intersection cohomology}, Transformation Groups 8 (2003), no. 3, 209�216.

%\bibitem{BrKu} M. Brion and S. Kumar: {\it Frobenius Splitting Methods in Geometry and Representation Theory}, Birkh\"auser Boston Inc., Boston, MA (2005).

\bibitem{BT1} F. Bruhat and J. Tits: {\it Groupes r\'eductifs sur un corps local I. Donn\'ees radicielles valu\'ees}, Inst. Hautes \'Etudes Sci. Publ. Math. 41 (1972), 5-251. 

\bibitem{BT2} F. Bruhat and J. Tits: {\it Groupes r\'eductifs sur un corps local II. Sch\'ema en groupes. Existence d'une donn\'ee radicielle valu\'ee}, Inst. Hautes \'Etudes Sci. Publ. Math. 60 (1984), 197-376. 

\bibitem{CT} J.-L. Colliot-Th\'el\`ene and J.-J. Sansuc: {\it Principal homogeneous spaces under flasque tori: applications.} J. Algebra 106 (1987), no. 1, 148-205.

%\bibitem{DM} P. Deligne and J. Milne: {\it Tannakian categories}, in Hodge Cycles and Motives, Lecture Notes in Math. 900 (1982) 101�C228.

%\bibitem{DrSi} V. G. Drinfeld and C. Simpson: {\it $B$-structures on $G$-bundles and local triviality.}, Math. Res. Lett. 2 (1995), no. 6, 823-829.

\bibitem{DyL} M. Dyer and G. Lehrer: {\it Reflection subgroups of finite and affine Weyl groups}, Transactions of the American Mathematical Society, 363 (2011), no. 11, 5971-6005.

\bibitem{Ed} B. Edixhoven: {\it N\'eron models and tame ramification}, Compositio Math. 81 (1992), no. 3, 291-306.

%\bibitem{Fa} G. Faltings: {\it Algebraic loop groups and moduli spaces of bundles}, J. Eur. Math. Soc. (JEMS) 5 (2003), no. 1, 41�68.

\bibitem{GGMB} O. Gabber, P. Gille and L. Moret-Bailly: {\it Fibr\'es principaux sur les corps valu\'es hens\'eliens}, Algebraic Geometry, Vol. 1, Issue 5 (December 2014), 573-612.

\bibitem{Ga} D. Gaitsgory: {\it Construction of central elements in the affine Hecke algebra via nearby cycles}, Invent. Math. 144 (2001), no. 2, 253--280.

\bibitem{Gi} Ginzburg: {\it Perverse sheaves on a loop group and Langlands' duality}, preprint (1995), arXiv:alg-geom/9511007.

%\bibitem{Goertz} U. G\"ortz: {\it Affine Springer fibers and affine Deligne-Lusztig varieties}, in: A. Schmitt (ed.), Proceedings of Affine Flag Manifolds and Principal Bundles (Berlin 2008), Trends in Mathematics, Birkh\"auser (2010).

\bibitem{GH} U. G\"ortz and T. Haines: {\it The Jordan-H\"older series for nearby cycles on some Shimura varieties and affine flag varieties}, J. Reine Angew. Math. 609 (2007), 161-213.

%\bibitem{GHKR} U. G\"ortz, T. Haines, R. Kottwitz and D. Reuman: {\it Dimensions of some affine Deligne-Lusztig varieties}, Ann. Sci. �cole Norm. Sup. (4) 39 (2006), no. 3, 467�511.

%\bibitem{Haines1} T. Haines: {Notes on the Kazdhan-Lusztig parity vanishing}, available at http://www2.math.umd.edu/~tjh/.

%\bibitem{Haines} T. Haines: {\it Structure constants for Hecke and representation rings}, Int. Math. Res. Not. 2003, no. 39, 2103�2119.

\bibitem{Haines14} T. Haines: {\it On Satake parameters for representations with parahoric fixed vectors}, available at arXiv:1402.3812.

\bibitem{HR} T. Haines and M. Rapoport: {\it On parahoric subgroups}, Adv. Math. 219 (2008), 188-198.

\bibitem{HRo} T. Haines and S. Rostami: {\it The Satake isomorphism for special maximal parahoric Hecke algebras}, Represent. Theory 14 (2010), 264-284.

\bibitem{He} J. Heinloth: {\it Uniformization of $\calG$-bundles}, Math. Ann. 347 (2010), no. 3, 499-528.

%\bibitem[HNY]{HNY} Jochen Heinloth, Ng\^{o} Bao Chao and Zhiwei Yun: {\it Kloomsterman sheaves for reductive groups}, arXiv:1005.2765.

\bibitem{I} L. Illusie: {\it Autour du th\'eor\`eme de monodromie locale}, P\'eriodes $p$-adiques, Ast\'erisque 223 (1994), 9-57.

\bibitem{Jantzen} J. Jantzen: {\it Representations of algebraic groups}, Amer. Math. Soc. (2007), 576 pp. 

\bibitem{KW} R. Kiehl and R. Weissauer: {\it Weil conjectures, perverse sheaves and $\ell$-adic Fourier transform}, Springer (2001), Vol. 42, pp 375.

%\bibitem{KLV} D. Kazhdan, M. Larsen and Y. Varshavsky: {\it The Tannakian formalism and the Langlands conjectures}, preprint 2010, arXiv:1006.3864.

\bibitem{Kott} R. Kottwitz: {\it Isocrystals with additional structures II}, Compos. Math. 109 (1997), 255-339.

%\bibitem{Kumar} S. Kumar: {\it Proof of the Parthasarathy-Ranga Rao-Varadarajan conjecture}, Invent. Math. 102 (1990), no. 2, 377-398.

%\bibitem[La]{Land} E. Landvogt: {\it A compactification of the Bruhat-Tits building}, Lecture Notes in Math., vol. 1619, Springer-Verlag, Berlin, 1996, viii+152 pp.

\bibitem{LO} Y. Laszlo and M. Olsson: {\it Perverse $t$-structure on Artin stacks}, Math. Z. 261 (2009), no. 4, 737-748.

%\bibitem{LS} Y. Laszlo and C. Sorger: {\it The line bundles on the moduli of parabolic $G$-bundles over curves and their sections}, Ann. Sci. \'Ecole Norm. Sup. (4) 30 (1997), no. 4, 499-525.

\bibitem{Levin} B. Levin: {$G$-valued flat deformations and local models}, thesis 2013, available at\\ http://www.stanford.edu/\textasciitilde bwlevin/. 

%\bibitem{Lu} G. Lusztig: {\it Singularities, character formulas, and a $q$-analogue of weight multiplicities}, Analysis and Topology on Singular Spaces, II, III, Ast\'erisque, Luminy, 1981, vol. 101�102, Soc. Math. France, Paris (1983), pp. 208-229.

%\bibitem{Mathieu} O. Mathieu: {\it Construction d�un groupe de Kac-Moody et applications}, Compositio Math. 69 (1989), 37-60.

\bibitem{MV} I. Mirkovi\'c and K. Vilonen: {\it Geometric Langlands duality and representations of algebraic groups over commutative rings},  Ann. of Math. (2)  166  (2007),  no. 1, 95-143.

%\bibitem{Mochi} S. Mochizuki: {\it Foundations of $p$-adic Teichm\"uller theory} , American Mathematical Society, 1999.

\bibitem{NP} Ng\^o B. C. and P. Polo: {\it R\'esolutions de Demazure affines et formule de Casselman-Shalika g\'eom\'etrique}, J. Algebraic Geom. 10 (2001), no. 3, 515-547.

\bibitem{PR} G. Pappas and M. Rapoport: {\it Twisted loop groups and their affine flag varieties}, Adv. Math. 219 (2008), 118-198. 

\bibitem{PRS} G. Pappas, M. Rapoport and B. Smithling: {\it Local models of Shimura varieties, I. Geometry and combinatorics}, Handbook of Moduli, vol. III, 135-219.

\bibitem{PZ} G. Pappas and X. Zhu: {\it Local models of Shimura varieties and a conjecture of Kottwitz}, Invent. Math. 194 (2012), 1-108.

\bibitem{PY} G. Prasad and J.-K. Yu: {\it On finite group actions on reductive groups and buildings}, Invent. Math. 147 (2002), 545-560.

%\bibitem{Ra} M. Rapoport: {\it A guide to the reduction modulo $p$ of Shimura varieties}, Ast\'erisque No. 298 (2005), 271-318.

%\bibitem{RR} R. Reich: {\it Twisted geometric Satake equivalence via gerbes on the factorizable Grassmannian}, Represent. Theory 16 (2012), 345-449.

\bibitem{Ri1} T. Richarz: {\it Schubert varieties in twisted affine flag varieties and local models}, Journal of Algebra 375 (2013), 121-147.

\bibitem{Ri2} T. Richarz: {\it A new approach to the geometric Satake equivalence}, Documenta Mathematica 19 (2014) 209-246.

\bibitem{Ri3} T. Richarz: {\it Geometric constant term functors}, in preparation.

\bibitem{Sp} T. A. Springer: {\it Twisted conjugacy in simply connected groups}, Transformation Groups, Vol. 11, No. 3, 2006, 539-545.

\bibitem{St} R. Steinberg: {\it Endomorphisms of linear algebraic groups}, Memoirs of the AMS, No. 80 (1968).    

\bibitem{Ti} J. Tits: {\it Reductive groups over local fields}, Automorphic forms, representations and $L$-functions, in: Proc. Sympos. Pure Math., Corvallis, OR, 1977, vol. XXXIII, Amer. Math. Soc., Providence, RI, 1979, pp. 29-69. 

%\bibitem{Zhu2} X. Zhu: {\it Affine Demazure modules and $T$-fixed point subschemes in the affine Grassmannian}, .

\bibitem{Z} X. Zhu: {\it On the coherence conjecture of Pappas and Rapoport}, Annals of Mathematics 180 (2014), 1-85.

\bibitem{RZ} X. Zhu: {\it The Geometrical Satake Correspondence for Ramified Groups}, with an appendix by T. Richarz and X. Zhu, arXiv:1107.5762v1.

\end{thebibliography}
\end{document}